\def\section{\@startsection{section}{1}%
	\z@{.7\linespacing\@plus\linespacing}{.5\linespacing}%
	{\bfseries\normalfont\scshape
		\centering
}}
\def\@secnumfont{\bfseries}
\newcommand{\newhat}{\scalebox{1.5}[.75]{\trimbox{0pt 1.1ex}{\textasciicircum}}}
\newcommand{\stretchedhat}[1]{\accentset{\newhat}{#1}}
\def\shat{\stretchedhat}
\numberwithin{equation}{section}
\newtheorem{Thm}{Theorem}[section]
\newtheorem{Lem}{Lemma}[section]
\newtheorem{Pro}{Proposition}[section]
\newtheorem{Rem}{Remark}[section]
\newtheorem{Cor}{Corollary}[section]
\newtheorem{Def}{Definition}[section]
\title[Optimal control problem  for the 2D  Landau-Lifshitz-Gilbert Equation]
{Analysis of the magnetization control problem for the 2D \\ \vspace{.1in} evolutionary  Landau-Lifshitz-Gilbert equation}
\author[Sidhartha Patnaik and  Sakthivel Kumarasamy ]{Sidhartha Patnaik and Sakthivel Kumarasamy}
\address{ Department of Mathematics \\
Indian Institute of Space Science and Technology (IIST) \\
Trivandrum- 695 547, INDIA}
\email{sidharthpatnaik96@gmail.com, sakthivel@iist.ac.in}
\curraddr{}
\subjclass[2010]{35K20, 35Q56, 35Q60, 49J20}
\keywords{Landau-Lifshitz-Gilbert equation, Magnetization dynamics, Optimal control, First-order optimality condition,  Second-order optimality condition}
\begin{document}
\thanks{The work of the second author is supported by the National Board for Higher Mathematics, Govt. of India through the research grant No:02011/13/2022/R\&D-II/10206.}
\maketitle	

\begin{abstract}
	The magnetization control problem for the Landau-Lifshitz-Gilbert (LLG) equation $m_t= m \times (\Delta m +u)-  m \times (m \times (\Delta m +u)),\ (x,t) \in \Omega\times (0,T] $ with zero Neumann boundary data on a two-dimensional bounded domain $\Omega$ is studied when the control energy $u$ is applied on the effective field. First, we show the existence of a weak solution, and the magnetization vector field $m$ satisfies an energy inequality. If a weak solution $m$ obeys the condition that $\nabla m\in L^4(0,T;L^4(\Omega)),$ then we show that it is a regular solution. The classical cost functional is modified by incorporating $L^4(0,T;L^4(\Omega))$-norm of $\nabla m$  so that a rigorous study of the optimal control problem is established. Then, we justified the existence of an optimal control and derived first-order necessary optimality conditions using an adjoint problem approach. We have established the continuous dependency and Fr\'echet differentiability of the control-to-state and control-to-costate operators and shown the Lipschitz continuity of their Fr\'echet derivatives. Using these postulates,  we derived a local second-order sufficient optimality condition when a control belongs to a critical cone. Finally, we also obtain another remarkable global optimality condition posed only in terms of the adjoint state associated with the control problem.   
\end{abstract}

\section{Introduction}

The study of magnetization dynamics in ferromagnetic media is of great interest because of its wide range of applications, from magnetic sensors to data storage devices. In 1935, L.D. Landau and E.M. Lifshitz obtained  the first dynamical model for micromagnetization phenomena occurring inside ferromagnetic materials (\cite{LLEL}). Later, in 1955, T.L. Gilbert modified this equation by introducing dissipation in a phenomenological way (\cite{TLG}). This paper considers the LLG equation containing energy interactions from exchange energy and external magnetic field.

In the context of our study, we consider a smooth, bounded region $\Omega$ within two-dimensional space, representing a material with ferromagnetic properties. The magnetization of this material is described by the vector field $m$, which varies over both space and time, denoted as $m: \Omega \times [0, T] \rightarrow \mathbb{R}^3$. It's important to note that below the Curie temperature, the magnitude of the magnetization remains constant. To account for this, we normalize the magnetization vector field by dividing it by the saturated magnetization, ensuring that our net magnetization is a unit vector field, i.e., $m \in \mathbb{S}^2$, which is the unit sphere in $\mathbb{R}^3$.

The evolution of this magnetization field is governed by the Landau-Lifshitz-Gilbert (LLG) equation, given by:
$$m_t= \gamma m \times \mathcal{E}_{eff}(m)- \alpha\gamma m \times (m \times \mathcal{E}_{eff}(m)),$$
where $\times$ represents the cross product in $\mathbb{R}^3,$  $\mathcal{E}_{eff}$ represents the effective field acting on the magnetization, $\alpha$ is a positive parameter known as the Gilbert damping constant, and $\gamma$ represents the gyromagnetic factor.  The effective field $\mathcal{E}_{eff}(m)$ is determined as the negative gradient of the micromagnetism energy $\mathcal{E}(m)$, that is, $\mathcal{E}_{eff}(m)=-\nabla_m \mathcal{E}(m)$. The micromagnetism energy $\mathcal{E}(m)$ encapsulates various energy interactions within the ferromagnetic material.  Specifically, $\mathcal{E}=\mathcal{E}_{ex}+\mathcal{E}_{an}+\mathcal{E}_{me}+\mathcal{E}_{d}+\mathcal{E}_{a}$, that is, $\mathcal{E}$ comprises the following energy components: exchange energy $\mathcal{E}_{ex}$, anisotropy energy $\mathcal{E}_{an}$, magnetoelastic energy $\mathcal{E}_{me}$, demagnetization field energy $\mathcal{E}_{d}$, and external magnetic field $\mathcal{E}_{a}$. For more comprehensive details on these energy interactions we refer to \cite{WFB}.

In ferromagnetic materials, atomic magnetic moments strive to align themselves with neighboring moments due to exchange interactions, leading to an increase in exchange energy when they deviate from their equilibrium orientation. Assuming an external magnetic field described by a function $u:\Omega\times[0,T]\to\mathbb R^3$ and a purely isotropic energy field, the micromagnetism energy can be expressed as:
$$\mathcal{E}=\frac{1}{2} \int_\Omega |\nabla m|^2 dx- \int_\Omega u\cdot m\ dx.$$

Considering the associated magnetic fields for these energy components, the effective field is given by $\mathcal{E}_{eff}(m)=\Delta m+u$. For an extensive overview of the model and the physical interpretation of these energy terms, we refer to \cite{MKAP}. 

By taking the effective field concerning only the exchange energy and external magnetic field into account, and fixing the damping constant and gyromagnetic factor to be of unit values, that is, $\alpha=\gamma=1$, the Landau-Lifshitz-Gilbert equation becomes
\begin{equation}\label{NLP}
	\begin{cases}
		m_t= m \times (\Delta m +u)-  m \times (m \times (\Delta m +u)),\ \ \ (x,t) \in \Omega_T:=\Omega\times (0,T], \\
		
		\frac{\partial m}{\partial \eta}=0, \ \ \ \ \ \ \ \ (x,t) \in \partial\Omega_T:= \partial \Omega\times [0,T],\\
		m(\cdot,0)=m_0 \ \ \text{in} \ \Omega,
	\end{cases}	
\end{equation}
where $\eta$ is the outward unit  normal vector to the boundary $\partial \Omega$ and $u$ is the external magnetic field.
Further, while going for the regular solution of system \eqref{NLP}, we have assumed that the initial magnetic field  $m_0:\Omega\to\mathbb R^3$ satisfies the following conditions
\begin{equation}\label{IC}
	m_0 \in H^2(\Omega),\ \ \frac{\partial m_0}{\partial \eta}=0 \ \ \text{on} \ \partial \Omega,\ \ |m_0|=1.	
\end{equation}
We have considered the cost functional $\mathcal{J}:\mathcal M \times \mathcal{U}_{ad} \to \mathbb{R}^+$ defined as
\begin{eqnarray}\label{CF}
	\mathcal J(m,u):= \frac{1}{4} \int_0^T \int_{\Omega} |\nabla  m- \nabla  m_d|^4\ dx\  dt +  \frac{1}{2} \int_{\Omega} |m(x,T) -m_\Omega(x)|^2\ dx\nonumber\\
	+\frac{1}{2} \int_0^T \int_{\Omega} |u(x,t)|^2 \ dx\ dt +\frac{1}{2} \int_0^T \int_{\Omega} |\nabla u(x,t)|^2 \ dx\ dt,				
\end{eqnarray}
where the desired evolutionary magnetic moment $m_d$ is a map from $\Omega\times[0,T]$ to $\mathbb R^3$ such that $\nabla m_d \in L^4(0,T;L^4(\Omega))$ and final time target moment $m_{\Omega}:\Omega\to\mathbb R^3$ belongs to $L^2(\Omega)$. The optimal control problem can be interpreted as the search for an optimal strategy to magnetize a ferromagnetic material such that\vspace{-0.3cm}
\begin{enumerate}[(i)]
	\item the gradient profile of desired magnetization vector field $m_d$ can be realized by the first term of the cost functional. This first term of $\mathcal J(m,u),$ which is defined with $L^4(0,T;L^4(\Omega))$ norm of $\nabla(m-m_d)$ instead of usual $L^2(0,T;L^2(\Omega)),$ also plays a crucial role in showing the strong solvability of system \eqref{NLP} in Section \ref{SECSS}.
	\item The magnetization in the material should reach the target moment $m_{\Omega}$ at the final time, represented by the second term of the cost functional.
	\item The last two terms ensures that we achieve desired gradient profile of evolutionary moment $m_d$ and final target $m_{\Omega}$ by applying the least amount of external magnetic field in $L^2(0,T;H^1(\Omega))$ space.
\end{enumerate}\vspace{-0.3cm}

The control problem finds application in optimizing the performance of magnetic hard drives,  magnetic random access memories (MRAM) and various magnetic storage devices \cite{BGSA,TDMKAP,THL}.  By strategically optimizing the external magnetic field, the writing process can be significantly improved, leading to enhanced efficiency and reliability of data storage systems. The control problem can also be relevant in designing and optimizing spintronic devices, such as spin valves and magnetic tunnel junctions, to achieve desired spin configurations and improve device performance \cite{NMOHK}. 
 This paper explores the potential of such control optimization techniques by studying the second-order sufficient local optimality condition and adjoint system based global optimality condition, which is very much essential for numerical methods.



In \cite{FAAS}, the authors established the weak solutions for the Landau-Lifshitz (LL) equation in  both bounded regular domain of $\mathbb R^3$ and on the entire domain  $\mathbb{R}^3$  without an external magnetic field $u.$ They utilized a penalized form, ensuring $|m|=1$, and demonstrated its convergence to the weak formulation. Further, they obtained a critical result that the weak solution of the LL equation exists globally but, in general, is not unique.  A visible amount of work has already been done for the strong solution of  the LLG equation but without the nonlinear control term in the effective field. The local existence and uniqueness of regular solution over a bounded domain in $\mathbb{R}^n,n=2,3$ and the whole domain $\mathbb{R}^3$ was investigated in \cite{GCPF}, \cite{GCPF2}.   The global existence under a smallness assumption on the initial data over 2D and 3D bounded domains have been analyzed in \cite{GCPF} and \cite{MFTT}, respectively.  Apart from the solvability of the LLG equation, as explained above, minimal articles are available for the optimal control problem.  The paper \cite{TDMKAP} discusses the solvability of the control problem for the 1D LLG equation, the existence of an optimum, and gives the first-order necessary optimality condition. The article \cite{FAKB} investigates the optimal control type problems associated with the LLG equation, and a necessary optimality system is derived when the magnetization is constant in space, which essentially leads to an optimization problem constraint by an ordinary differential equation.  Further, for more results related to controllability of the Landau-Lifshitz equation, one may refer to \cite{SAGC},\cite{GCSL2},\cite{AC}.

\noindent {\bf The main contributions of this paper are summarized as follows:}

\begin{itemize}
\item We have proven the existence of a weak solution for the LLG equation \eqref{NLP} on a bounded domain $\Omega \subset \mathbb{R}^2$, following the methodology in \cite{FAAS}. Addressing complications arising from the nonlinear control $u$, we introduce a suitable penalized problem, enabling the existence of a weak solution  when the control is in $L^2(0,T;L^2(\Omega))$.
	\item In the context of the 1D LLG equation, as demonstrated in \cite{TDMKAP}, the authors established the existence of a unique strong solution in $L^2(0,T;H^2(\Omega))$ when the control belongs to $L^2(0,T;L^2(\Omega))$. This result was achieved without imposing any restrictions based on the size of the initial data or control. However, when extending this analysis to the 2D LLG system \eqref{NLP}, the severe non-linearity of the state equation made it challenging to directly show the strong solution for controls in $L^2(0,T;L^2(\Omega))$ using the classical Galerkin method. Infact, the existence of regular solution is proven for uncontrolled 2D LLG equation only when the initial data is sufficiently small (see, \cite{GCPF}, \cite{AP}).  In our previous work \cite{SPSK}, we addressed the local solvability of the 2D control problem \eqref{NLP} for all controls $u\in L^2(0,T;H^1(\Omega))$ with initial data satisfying condition \eqref{IC}. Moreover, we substantiated the existence and uniqueness of a global regular solution (see, Theorem 2.2, \cite{SPSK}) under a smallness condition on control and initial data.
	
	In this paper, we have made a significant contribution by demonstrating that if the magnetization field $m$ is a weak solution of \eqref{NLP} and satisfies $\nabla m\in L^4(0,T;L^4(\Omega))$, it qualifies as a regular solution of the LLG equation \eqref{NLP}, all without the need for smallness conditions imposed on the control and initial data. This result paves the way for a more comprehensive and realistic exploration of the control problem, offering broader applicability and relevance in practical scenarios.
	

	\item When the  state equation doesn’t admit a (unique) strong solution for a general class of controls and data,  in Chapter 4 of \cite{SSS}, the author established and validated the argument that any weak solution of 3D Navier-Stokes equation belonging to the space $L^8(0,T;L^4(\Omega))$ become a strong solution. Subsequently, in \cite{ECKC}, the authors considered an appropriate  norm in the cost functional, which confirmed the strong solvability of the problem and allowed for the study of the associated optimal control problem.  Moreover, for a different method of solving the optimal control of 3D Navier-Stokes equations with pointwise control constraints and MHD equations with state constraints, one may refer to \cite{BTK} and \cite{LW}, respectively.

We adopted a similar concept of \cite{ECKC} in the current paper but with a different approach. Instead of directly employing the method of that paper, which would involve a specific norm condition on $\Delta m$ by which weak solutions of the LLG equation become regular, we utilized the regularity property of the local solution and derived estimate \eqref{SSEE2}. This approach led us to identify a condition on the gradient of the weak solution under which it qualifies as a regular solution. This insight prompted the introduction of the term $\|\nabla m-\nabla m_d\|^4_{L^4(0,T;L^4(\Omega))}$ in the cost functional. This term ensures that any weak solution $m$ of \eqref{NLP} satisfying $\mathcal J(m,u)<+\infty$ is a regular solution of \eqref{NLP}. Consequently, we proceeded to study the existence of an optimal control for \eqref{NLP}-\eqref{CF} and derived the first-order optimality conditions.

\item Finally, we address two more crucial results of this paper: a local second-order sufficient optimality condition and a global optimality condition. The local optimality criterion, is defined on a certain cone of critical directions (see, \cite{FT}). It requires a rigorous study of the control-to-state operator that maps from the control space $\mathcal  U_R$ into the regular solution space $\mathcal M$  and control-to-costate operator that maps from control set $\mathcal{U}_R$ to the weak adjoint solution space $\mathcal{Z}$.  We establish the important properties that both of these operators are Lipschitz continuous and Fr\'echet differentiable.
		
	Furthermore, we prove a remarkable theorem stating a global optimality condition, relying solely on the $L^2(0, T; L^2(\Omega))$ norm of the weak solution of the adjoint system \eqref{AS}. These optimality conditions are particularly valuable in non-convex or nonlinear optimal control problems and play a crucial role in demonstrating the convergence of error estimates for the numerical discretization of corresponding computational optimization problems (see, \cite{ADH, CRT, FTW}). 

\end{itemize} 

The paper is organized as follows: In subsection \ref{MR}, we present the main results of this paper. Subsection \ref{FS} establishes standard inequalities and essential cross product estimates that are utilized throughout the manuscript. Section \ref{S-WSRS} addresses the existence of both weak and regular solutions for system \eqref{NLP}. Section \ref{S-FOOC} investigates the existence of optimal control and presents the first-order optimality condition. This section also discusses the Fr\'echet differentiability and Lipschitz continuity of the control-to-state operator, as well as the weak solvability of the adjoint system. Subsection \ref{S-CTCO} focuses on the Fr\'echet differentiability and Lipschitz continuity of the control-to-costate operator. Lastly, subsection \ref{S-SOOC} is dedicated to the second-order optimality condition, and section \ref{S-GOC} concludes the paper with the global optimality condition.

\section{Main Results and Function Spaces}

\subsection{Main Results}\label{MR}

In this section we state the main results of our paper. For any regular solution of system \eqref{NLP}, by taking cross product with $m$, we can find that \eqref{NLP} is equivalent to  
$$\frac{\partial m}{\partial t} + m \times \frac{\partial m}{\partial t}=2 \sum_{i=1}^{3} \frac{\partial }{\partial x_i} \left(m\times \frac{\partial m}{\partial x_i}\right)+2\  m \times u.$$
This leads us to introduce the following notion of weak solution for system \eqref{NLP} (see, \cite{FAAS}).
\begin{Def}\label{WSD}
	Suppose $m_0 \in H^1(\Omega)$ with $|m_0|=1$ a.e. $x \in \Omega$ and $u\in L^2(0,T;L^2(\Omega))$. A function $m\in L^2(0,T;H^1(\Omega))$ with $m_t\in L^2(0,T;L^2(\Omega))$ and $|m|=1$ a.e. in $\Omega_T$ is said to be a weak solution of \eqref{NLP} if the following hold: 
	\begin{enumerate}
		\item 			 $\displaystyle{ \int_{\Omega_T} m_t \cdot \psi \ dx\ dt + \int_{\Omega_T}\left(m \times m_t \right)\cdot \psi \ dx\ dt}$\\ 
		$\displaystyle{\hspace{0.4in}=-2\sum_{i=1}^{3}\int_{\Omega_T}\left(m \times\frac{\partial m}{\partial x_i}\right)\cdot \frac{\partial \psi}{\partial x_i}\ dx\ dt+2 \int_{\Omega_T}(m \times u)\cdot \psi \ dx\  dt\ \ \ \ \forall \ \psi \in L^2(0,T;H^1(\Omega)),}$
		\item $m(x,0)=m_0(x)$,\\
		\item $\displaystyle{ \frac{1}{2}\left\|m_t \right\|^2_{L^2(0,T;L^2(\Omega)}+\sup_{t \in [0,T]} \|\nabla m(t)\|^2_{L^2(\Omega)}  \ \leq   \|\nabla m_0\|^2_{L^2(\Omega)}+2\ \|u\|^2_{L^2(0,T;L^2(\Omega))}.}$
	\end{enumerate}
\end{Def}
The following theorem proves the existence of weak solution of system \eqref{NLP} in the above sense. 
\begin{Thm}[Weak Solution]\label{EOWS}
	For any control $u\in L^2(0,T;L^2(\Omega))$ and initial data $m_0\in H^1(\Omega)$ with $|m_0|=1$, there exists a weak solution of system \eqref{NLP} in the sense of Definition \ref{WSD}.
\end{Thm}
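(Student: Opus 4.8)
The plan is to construct the weak solution by a vanishing--penalization argument in the spirit of \cite{FAAS}, combined with a Faedo--Galerkin discretization and a final compactness passage. First I would relax the pointwise constraint $|m|=1$: for each $\varepsilon>0$ replace the effective field $\Delta m+u$ by the penalized field $H_\varepsilon:=\Delta m_\varepsilon+u+\tfrac{1}{\varepsilon}(1-|m_\varepsilon|^2)m_\varepsilon$ and consider the unconstrained Gilbert system $\partial_t m_\varepsilon = m_\varepsilon\times H_\varepsilon - m_\varepsilon\times(m_\varepsilon\times H_\varepsilon)$ with the homogeneous Neumann condition and initial datum $m_0$. For fixed $\varepsilon$ this is a semilinear parabolic system free of the sphere constraint, which I would solve on the finite--dimensional spaces spanned by the Neumann eigenfunctions of $-\Delta$; local existence of the Galerkin approximants follows from the Cauchy--Lipschitz theorem for the resulting ODE systems, and they are continued to all of $[0,T]$ once the uniform energy estimate below is in force. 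The nonlinear control enters only through the bounded terms $m_\varepsilon\times u$ and $m_\varepsilon\times(m_\varepsilon\times u)$, which are controlled by the cross--product estimates of Subsection~\ref{FS}.

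The second step is to derive a priori estimates uniform in both the Galerkin index and $\varepsilon$. The central computation is to test the penalized equation with $\partial_t m_\varepsilon$ and integrate by parts using the Neumann boundary condition: the precessional part $m_\varepsilon\times H_\varepsilon$ and the damping part $m_\varepsilon\times(m_\varepsilon\times H_\varepsilon)$ are algebraically orthogonal, so the dissipation controls $\|\partial_t m_\varepsilon\|_{L^2(L^2)}$, while the diffusion and penalty contributions appear as the time derivative of the penalized energy $\tfrac12\|\nabla m_\varepsilon\|_{L^2}^2+\tfrac{1}{4\varepsilon}\|1-|m_\varepsilon|^2\|_{L^2}^2$. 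Since $u\in L^2(0,T;L^2(\Omega))$ carries no time regularity, I would absorb the control term $\int u\cdot\partial_t m_\varepsilon$ by Young's inequality rather than by integrating by parts in time. This yields a bound of the form $\|\partial_t m_\varepsilon\|^2_{L^2(0,T;L^2(\Omega))}+\sup_{t}\|\nabla m_\varepsilon(t)\|^2_{L^2(\Omega)}+\tfrac{1}{\varepsilon}\sup_{t}\|1-|m_\varepsilon(t)|^2\|^2_{L^2(\Omega)}\le C\big(\|\nabla m_0\|^2_{L^2(\Omega)}+\|u\|^2_{L^2(0,T;L^2(\Omega))}\big)$ with $C$ independent of $\varepsilon$; in particular $\int_{\Omega_T}(1-|m_\varepsilon|^2)^2\,dx\,dt\le C\varepsilon$, which will force $|m|=1$ in the limit.

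Finally, after the inner Galerkin limit I would extract a subsequence with $m_\varepsilon\rightharpoonup m$ weakly-$*$ in $L^\infty(0,T;H^1(\Omega))$, $\partial_t m_\varepsilon\rightharpoonup m_t$ weakly in $L^2(0,T;L^2(\Omega))$, and---the decisive point---$m_\varepsilon\to m$ strongly in $L^2(\Omega_T)$ by the Aubin--Lions lemma, using the uniform bound in $L^2(0,T;H^1(\Omega))\cap H^1(0,T;L^2(\Omega))$ and the compact embedding $H^1(\Omega)\hookrightarrow\hookrightarrow L^2(\Omega)$ in two dimensions; the penalty bound then gives $|m|=1$ a.e. in $\Omega_T$. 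I expect the main obstacle to be the passage to the limit in the nonlinear cross--product terms $m_\varepsilon\times\partial_{x_i}m_\varepsilon$, $m_\varepsilon\times\partial_t m_\varepsilon$ and $m_\varepsilon\times u$, where only one factor converges strongly. The device is the scalar triple--product identity $(a\times b)\cdot c=-(a\times c)\cdot b$, which lets me move the \emph{strongly} convergent factor $m_\varepsilon$ next to a bounded test function (taking $\psi$ smooth, then extending by density in $L^2(0,T;H^1(\Omega))$) and pair the result against the only \emph{weakly} convergent factor, so that each strong$\times$weak product converges to the expected limit; the penalty contribution $\tfrac1\varepsilon(1-|m_\varepsilon|^2)m_\varepsilon$ tested against these directions drops out by the $O(\varepsilon)$ bound above. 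The energy inequality in item~(3) of Definition~\ref{WSD} is then recovered from the uniform estimate by weak lower semicontinuity of the norms, completing the verification that $m$ is a weak solution.
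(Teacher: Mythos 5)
There is a genuine and fatal gap at the very first step: the form of the equation you choose to penalize. In the precession--damping form $\partial_t m_\varepsilon = m_\varepsilon\times H_\varepsilon - m_\varepsilon\times(m_\varepsilon\times H_\varepsilon)$, any component of the effective field parallel to $m_\varepsilon$ is annihilated by the cross products: $m_\varepsilon\times\bigl[\tfrac{1}{\varepsilon}(1-|m_\varepsilon|^2)m_\varepsilon\bigr]=0$ and $m_\varepsilon\times\bigl(m_\varepsilon\times\bigl[\tfrac{1}{\varepsilon}(1-|m_\varepsilon|^2)m_\varepsilon\bigr]\bigr)=0$ pointwise. Hence your ``penalized'' system is \emph{identical} to the original system \eqref{NLP}: no relaxation of the sphere constraint occurs, the penalty exerts no restoring force on the Galerkin approximants (whose modulus is \emph{not} preserved, since projection onto $W_n$ destroys the pointwise identity $m\cdot m_t=0$), and the bound $\tfrac{1}{\varepsilon}\sup_t\|1-|m_\varepsilon(t)|^2\|_{L^2}^2\le C$ that you rely on to recover $|m|=1$ in the limit can never be produced, because the penalty never enters the dynamics. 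Relatedly, your closing remark that the penalty term ``drops out by the $O(\varepsilon)$ bound'' is wrong even in principle: a bound $\tfrac{1}{\varepsilon}\|1-|m_\varepsilon|^2\|_{L^2}^2\le C$ makes $\tfrac{1}{\varepsilon}(1-|m_\varepsilon|^2)m_\varepsilon$ of size $O(\varepsilon^{-1/2})$ in $L^2$, so it must be eliminated structurally (by testing against directions of the form $m\times\psi$, against which it vanishes identically), not by smallness.

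Your central energy computation is also structurally incorrect for the form you chose. Testing the precession--damping equation with $\partial_t m_\varepsilon$ does \emph{not} generate $\int H_\varepsilon\cdot\partial_t m_\varepsilon$, hence no time derivative of the penalized energy; the orthogonality of the precessional and damping parts only yields $|\partial_t m_\varepsilon|^2=|m_\varepsilon\times H_\varepsilon|^2+|m_\varepsilon\times(m_\varepsilon\times H_\varepsilon)|^2$. The energy identity would require testing with $H_\varepsilon$ itself, which is impossible in the Galerkin scheme ($H_\varepsilon\notin W_n$), and even formally it leaves the control term $\int\bigl(m_\varepsilon\times(m_\varepsilon\times H_\varepsilon)\bigr)\cdot u\,dx$, whose estimate needs $\|m_\varepsilon\|_{L^\infty}$ --- unavailable once the constraint is relaxed, and not implied by $H^1(\Omega)$ in two dimensions. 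This is precisely why the paper (following \cite{FAAS}) penalizes the \emph{Gilbert} form instead, as in \eqref{WPP}: $(m^k)_t - m^k\times(m^k)_t = 2\bigl[\Delta m^k - k(|m^k|^2-1)m^k + u\bigr]$. There the field appears bare on the right-hand side, so the penalty genuinely acts; testing with $(m^k_n)_t$ kills $m^k_n\times(m^k_n)_t$ by exact pointwise orthogonality and produces $\tfrac{d}{dt}\bigl[\|\nabla m^k_n\|_{L^2}^2+\tfrac{k}{2}\|\,|m^k_n|^2-1\|_{L^2}^2\bigr]$ plus $2\int u\cdot(m^k_n)_t$, absorbed by Young's inequality --- uniformly in $n$ and $k$, with no powers of $|m^k_n|$ appearing. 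Your Galerkin setup, compactness step, and triple-product device for the limit passage are all fine in themselves, but they rest on an energy estimate and a constraint-recovery mechanism that your choice of penalization cannot deliver; rewriting the equation in Gilbert form before penalizing is the missing (and essential) idea.
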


The proof of this theorem resemble the arguments put forth in \cite{FAAS}. However, in our problem the control term appears non-linearly in the effective field. As a result, we provide a concise overview of the proof in our paper to account this modification. 

Considering the dot product of \eqref{NLP} with $m$ and applying the cross product properties stated in Lemma \ref{FS}, we can see that $m$ and $m_t$ are orthogonal in space and time, that is, $(d/dt)|m(\cdot,t)|^2=0$. Since $m_0\in \mathbb{S}^2$, we find that $|m(x,t)|^2=1$, pointwise in space and time. Consequently, for a regular solution $m$, expanding the cross product $m\times (m\times\Delta m)$ in \eqref{NLP} and  using the identity 
$\Delta |m|^2=2(m\cdot \Delta m)+ 2|\nabla m|^2$, we find an equivalent system of \eqref{NLP}:
\begin{equation}\label{EP}
	\begin{cases}
		m_t- \Delta m= |\nabla m|^2m +  m \times \Delta m  + m \times u- m \times (m \times u), \ \ \ (x,t)\in \Omega_T,\\
		
		\frac{\partial m}{\partial \eta}=0, \ \ \ \ \ \ \ (x,t) \in  \partial\Omega_T,\\
		
		m(\cdot,0)=m_0 \ \ \text{in} \ \Omega.
	\end{cases}	
\end{equation}
Due to the absence of explicit elliptic operator in equation \eqref{NLP}, we can't directly apply the classical techniques of Galerkin approximation. So, we will show the solvability of this equivalent system \eqref{EP} instead of \eqref{NLP}. If a function $m$ satisfies system \eqref{EP} almost everywhere and has an absolute value of $|m|=1$, it will also satisfy system \eqref{NLP} almost everywhere.


Now, define the set of admissible solution space as 
$$\mathcal{M}:= W^{1,2}(0,T;H^3(\Omega),H^1(\Omega))=\{m \in L^2(0,T;H^3(\Omega)) \ |\ m_t \in L^2(0,T;H^1(\Omega))\}.$$ 
A function $m_u \in \mathcal{M}$ that satisfies system \eqref{NLP} almost everywhere  while being associated with the control function $u\in L^2(0,T;H^1(\Omega))$ is referred to as a "regular solution."

\begin{Thm}(Global Existence of Regular Solution)\label{T-SS}
	Suppose the control $u\in L^2(0,T;H^1(\Omega))$ and the initial data $m_0$ satisfies condition \eqref{IC}. If $m$ is any weak solution of system \eqref{NLP} and satisfies the condition $\nabla   m \in L^4(0,T;L^4(\Omega))$, then $m$ is a unique regular solution.
	
	\noindent Moreover, there exists a constant $C(\Omega,T)>0$ such that the following estimate holds:
	\begin{align}\label{SSEE2}
		&\|m\|^2_{L^\infty(0,T;H^2(\Omega))} + \|m\|^2_{L^2(0,T;H^3(\Omega))}+ \|m_t\|^2_{L^2(0,T;H^1(\Omega))}  \nonumber\\
		&\ \ \ \ \ \ \ \  \leq   \exp\left(C\ \left[ 1 + \|\Delta  m_0\|^4_{L^2(\Omega)} +\ \|\nabla m\|^8_{L^4(0,T;L^4(\Omega))}+ \ \|u\|^4_{L^2(0,T;H^1(\Omega))} \right] \right).
	\end{align}	
\end{Thm}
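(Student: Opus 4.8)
The plan is to work with the equivalent semilinear parabolic system \eqref{EP} rather than with \eqref{NLP} directly, since \eqref{EP} exhibits the dissipative operator $-\Delta$ explicitly and is therefore amenable to higher-order energy estimates. Abbreviating the right-hand side of \eqref{EP} as $F:=|\nabla m|^2 m + m\times\Delta m + m\times u - m\times(m\times u)$, I would first record the regularity already at hand for a weak solution: the energy inequality in Definition \ref{WSD}(3) gives $m_t\in L^2(0,T;L^2(\Omega))$ and $\nabla m\in L^\infty(0,T;L^2(\Omega))$, the hypothesis supplies $\nabla m\in L^4(0,T;L^4(\Omega))$, and $|m|=1$ a.e.\ yields $\|m\|_{L^\infty}\le 1$. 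The objective is to upgrade these to $\mathcal M$-regularity through a single higher-order estimate, performed rigorously on the local regular solution provided by \cite{SPSK} and then propagated over $[0,T]$.

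The central computation is to test \eqref{EP} with $\Delta^2 m$ (equivalently, to run the estimate on the Galerkin system built from the Neumann eigenfunctions of $-\Delta$, which justifies the repeated integration by parts and the vanishing of boundary terms). The parabolic part produces
\[
\int_\Omega (m_t-\Delta m)\cdot\Delta^2 m\,dx=\tfrac12\frac{d}{dt}\|\Delta m\|_{L^2}^2+\|\nabla\Delta m\|_{L^2}^2,
\]
so that $L^\infty(H^2)$ and $L^2(H^3)$ control will follow once $\int_\Omega F\cdot\Delta^2 m\,dx=-\int_\Omega\nabla F\cdot\nabla\Delta m\,dx$ is absorbed. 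The decisive structural point is that the most dangerous top-order contribution, coming from $\nabla(m\times\Delta m)=\nabla m\times\Delta m+m\times\nabla\Delta m$, disappears, since $\int_\Omega(m\times\nabla\Delta m)\cdot\nabla\Delta m\,dx=0$ by orthogonality of the cross product; only the lower-order remainder $\int_\Omega(\nabla m\times\Delta m)\cdot\nabla\Delta m\,dx$ survives.

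I expect the main obstacle to be the estimation of the remaining nonlinear terms, each of which must be dominated by a small multiple of $\|\nabla\Delta m\|_{L^2}^2$ plus a time-integrable prefactor times $\|\Delta m\|_{L^2}^2$, so that a Gronwall argument closes. The worst terms are the cubic gradient term $|\nabla m|^2\nabla m$ and the second-order term $(\nabla m\cdot\nabla^2 m)\,m$ arising from $\nabla(|\nabla m|^2 m)$, together with $\nabla m\times\Delta m$. These are handled by two-dimensional Gagliardo--Nirenberg interpolation: bounding $\|\nabla m\|_{L^6}$ and $\|\nabla^2 m\|_{L^4}$ by products of $\|\nabla m\|_{L^4}$ (controlled by hypothesis), $\|\Delta m\|_{L^2}$ and $\|\nabla\Delta m\|_{L^2}$, then invoking Young's inequality with exponents chosen so that exactly $\|\nabla\Delta m\|_{L^2}^2$ is absorbed into the left-hand side. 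The leftover temporal factors are of the form $\|\nabla m(t)\|_{L^4(\Omega)}^4$, which is integrable in $t$ precisely because $\nabla m\in L^4(0,T;L^4(\Omega))$; this is what renders the Gronwall step effective and yields the quantitative bound \eqref{SSEE2}, the powers appearing there reflecting the accumulated Gronwall factors and the initial contribution $\|\Delta m_0\|_{L^2}$. Once $\|\Delta m\|_{L^\infty(L^2)}$ and $\|\nabla\Delta m\|_{L^2(L^2)}$ are in hand, elliptic regularity gives $m\in L^2(0,T;H^3(\Omega))$, and reading the equation as $\nabla m_t=\nabla\Delta m+\nabla F$ delivers $m_t\in L^2(0,T;H^1(\Omega))$, so that $m\in\mathcal M$.

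Finally, for the global extension and uniqueness I would argue that, since the right-hand side of \eqref{SSEE2} remains finite on $[0,T]$, the local regular solution of \cite{SPSK} cannot blow up and therefore persists on all of $[0,T]$. Uniqueness within the regular class follows from a standard weak--strong estimate: for two regular solutions $m_1,m_2$ sharing the data \eqref{IC}, the difference $w=m_1-m_2$ satisfies a linear parabolic problem whose nonlinear differences are controlled via the embeddings $H^2(\Omega)\hookrightarrow L^\infty(\Omega)$ and the $L^2(H^3)$-bounds already obtained; testing with $w$ (and, if needed, $-\Delta w$) and applying Gronwall forces $w\equiv 0$. The given weak solution, having $\nabla m\in L^4(0,T;L^4(\Omega))$, is thereby identified with this unique regular solution, completing the proof.
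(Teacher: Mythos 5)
Your proposal follows essentially the same route as the paper's proof: it works with the equivalent system \eqref{EP}, runs the $H^2$-level energy estimate in which the top-order contribution $m\times\nabla\Delta m$ cancels against $\nabla\Delta m$ by cross-product orthogonality, closes via 2D Gagliardo--Nirenberg interpolation, Young and Gronwall using the $L^4(0,T;L^4(\Omega))$ hypothesis, extends globally by the no-blow-up/maximal-time argument applied to the local regular solution of \cite{SPSK}, and recovers $m_t\in L^2(0,T;H^1(\Omega))$ directly from the equation. The only cosmetic deviations are that you test with $\Delta^2 m$ (justified by Galerkin) where the paper differentiates the equation and tests with $-\nabla\Delta m$, that your Gronwall prefactor is $\|\nabla m(t)\|^4_{L^4(\Omega)}$ directly rather than the paper's $\|\nabla m\|^2_{L^2}\|\Delta m\|^2_{L^2}$ (whose time-integrability the paper first derives from a preliminary first-order estimate, producing the eighth power in \eqref{SSEE2}; your bookkeeping gives a slightly sharper exponent that still implies the stated bound), and that uniqueness, which the paper delegates to Theorem 2.1 of \cite{SPSK}, is replaced by your standard difference estimate.
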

\begin{Rem}
	For any control $u\in L^2(0,T;H^1(\Omega))$ and initial data $m_0$ satisfying condition \eqref{IC}, if a regular solution exists, then it must be unique (see, Theorem 2.1, \cite{SPSK}).
\end{Rem}
\noindent As boundedness of $\mathcal{J}(m,u)$ implies $\|\nabla m\|_{L^4(0,T;L^4(\Omega))}< \infty$, so we can get the following remark.
\begin{Rem}
	If $m$ is a weak solution of \eqref{NLP} verifying $\mathcal{J}(m,u)<+\infty$, then $m$ is a regular solution.	
\end{Rem}
Let  $\mathcal{U}$ be the set of all controls in $L^2(0,T;H^1(\Omega))$ for which there exists a regular solution in $\mathcal{M}$. As a consequence of Proposition 5.1 of \cite{SPSK},  we can find that the set of controls $\mathcal{U}$ is an open set. We aim to prove the Fréchet differentiability of the control-to-state and control-to-costate operators, as well as the Lipschitz continuity of their derivatives over the open set $\mathcal{U}_R = \left\{ u \in \mathcal{U} : \mathcal{J}(m_u,u) < R \right\}$. Defining such a control set has the advantage that, according to estimate \eqref{SSEE2}, the admissible solution set will be uniformly bounded. In other words, there exists a constant $C$ depending on $R$ and initial data $m_0$, but independent of the control variable such that the following holds:
\begin{equation}\label{SSUB}
	\|m_u\|_{\mathcal{M}} \leq C, \ \ \ \ \forall \  u \in \mathcal{U}_R.
\end{equation}

In this work, we also consider the admissible control variables subject to box-type constraints. Given any  functions $a,b \in L^\infty (\Omega_T)$, we define
$$\mathcal{U}_{a,b}:=\left\{u \in L^2(0,T;L^2(\Omega)): a(x,t)\leq u(x,t)\leq b(x,t)\ \ \text{for almost every}\ (x,t) \in \Omega_T\right\}.$$
Furthermore, we introduce the set $\overline{\mathcal{U}_{R/2}} := \left\{ u\in \mathcal{U}  \ :\ \mathcal{J}(m_u,u) \leq  R/2 \right\}$. The admissible control set is then defined as $\mathcal{U}_{ad}:= \mathcal{U}_{a,b} \cap \overline{\mathcal{U}_{R/2}}$.

In  order to give a possible detailed analysis of the control problem, we have made the following non-emptiness assumption on the set of admissible control:
\begin{equation}\label{AOAC}
	\mathcal{U}_{ad}\neq  \emptyset.
\end{equation}
 A pair $(m,u) \in \mathcal{M} \times \mathcal{U}_{ad}$ is called an admissible pair if $(m,u)$ is a regular solution of system \eqref{NLP}. Let us denote the set of all admissible pair as $\mathcal{A}$.
The optimal control problem is stated as follows:
\begin{equation*}\label{OCP}
	\text{(OCP)}\begin{cases}
		\text{minimize}\ \mathcal J(m,u),\\
		(m,u) \in \mathcal{A}.
	\end{cases}	
\end{equation*}
\begin{Thm}[Existence of Optimal Control]\label{T-EOOC}
	Suppose the initial data $m_0$ satisfies condition \eqref{IC}, final time target $m_\Omega\in L^2(\Omega)$ and $\nabla m_d\in L^4(0,T;L^4(\Omega))$. Then under the assumption \eqref{AOAC}, the OCP has at least one solution. 
\end{Thm}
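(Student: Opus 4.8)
The plan is to apply the direct method of the calculus of variations. Since every term of $\mathcal J$ is nonnegative we have $\mathcal J\ge 0$, and since $\overline{\mathcal U_{R/2}}\subset\mathcal U$ by its very definition, each $u\in\mathcal U_{ad}$ possesses a regular solution $m_u$, so assumption \eqref{AOAC} guarantees $\mathcal A\neq\emptyset$ and the value $\mathcal J^*:=\inf_{(m,u)\in\mathcal A}\mathcal J(m,u)\in[0,\infty)$ is well defined. I would fix a minimizing sequence $(m_n,u_n)\in\mathcal A$ with $\mathcal J(m_n,u_n)\downarrow \mathcal J^*$. Because each $u_n\in\mathcal U_{ad}\subset\overline{\mathcal U_{R/2}}$ we have $\mathcal J(m_n,u_n)\le R/2<R$, so $u_n\in\mathcal U_R$ and the uniform bound \eqref{SSUB} yields $\|m_n\|_{\mathcal M}\le C$ with $C$ independent of $n$. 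Moreover, the last two terms of $\mathcal J$ bound $\|u_n\|_{L^2(0,T;H^1(\Omega))}$ and the first term bounds $\|\nabla m_n\|_{L^4(0,T;L^4(\Omega))}$, all uniformly in $n$.

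Next I would extract subsequences (not relabelled) such that $u_n\rightharpoonup u^*$ in $L^2(0,T;H^1(\Omega))$, $m_n\rightharpoonup m^*$ in $L^2(0,T;H^3(\Omega))$, $\partial_t m_n\rightharpoonup \partial_t m^*$ in $L^2(0,T;H^1(\Omega))$, and $\nabla m_n\rightharpoonup \nabla m^*$ in $L^4(0,T;L^4(\Omega))$. The decisive additional ingredient is compactness: with $m_n$ bounded in $L^2(0,T;H^3(\Omega))$ and $\partial_t m_n$ bounded in $L^2(0,T;H^1(\Omega))$, the Aubin--Lions--Simon lemma provides, along a further subsequence, strong convergence $m_n\to m^*$ in $L^2(0,T;H^2(\Omega))$ and in $C([0,T];H^{s}(\Omega))$ for some $s<2$; in particular $m_n\to m^*$ and $\nabla m_n\to\nabla m^*$ strongly in $L^2(0,T;L^2(\Omega))$ and pointwise a.e.\ on $\Omega_T$. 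The pointwise convergence transfers the saturation constraint, so $|m^*|=1$ a.e.

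The main work is to show that $(m^*,u^*)$ is again an admissible pair. I would pass to the limit in the weak formulation (1) of Definition \ref{WSD}: the linear terms pass by the weak convergence of $\partial_t m_n$ and $u_n$, while each nonlinear term is a product of a strongly convergent factor and a weakly convergent one, so that $m_n\times\partial_t m_n$, $m_n\times\partial_{x_i}m_n$ and $m_n\times u_n$ all converge to the corresponding limit because $m_n$ and $\nabla m_n$ converge strongly. This is precisely the step where Aubin--Lions compactness is indispensable, and I expect it to be the main obstacle, since products of two merely weakly convergent sequences cannot be controlled otherwise. The initial condition $m^*(\cdot,0)=m_0$ follows from the $C([0,T];H^{s}(\Omega))$ convergence, and the energy inequality (3) passes to the limit by weak lower semicontinuity of the norms. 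Thus $m^*$ is a weak solution of \eqref{NLP} associated with $u^*$, and since $\nabla m^*\in L^4(0,T;L^4(\Omega))$, Theorem \ref{T-SS} upgrades it to the unique regular solution; in particular $u^*\in\mathcal U$ and $m^*=m_{u^*}\in\mathcal M$.

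Finally I would verify $u^*\in\mathcal U_{ad}$ together with optimality. The box-constraint set $\mathcal U_{a,b}$ is convex and closed in $L^2(0,T;L^2(\Omega))$, hence weakly closed, so $u^*\in\mathcal U_{a,b}$. For the lower semicontinuity of $\mathcal J$ I would treat the terms separately: the first term $v\mapsto\frac14\|\nabla v-\nabla m_d\|^4_{L^4(0,T;L^4(\Omega))}$ and the control terms $\frac12\|u\|^2_{L^2(0,T;L^2(\Omega))}+\frac12\|\nabla u\|^2_{L^2(0,T;L^2(\Omega))}$ are convex and continuous, hence weakly lower semicontinuous, whereas the endpoint term $\frac12\|m(\cdot,T)-m_\Omega\|^2_{L^2(\Omega)}$ is in fact continuous along the sequence because $m_n(\cdot,T)\to m^*(\cdot,T)$ strongly in $L^2(\Omega)$ by the $C([0,T];H^{s}(\Omega))$ convergence. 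Combining these gives $\mathcal J(m^*,u^*)\le\liminf_{n}\mathcal J(m_n,u_n)=\mathcal J^*\le R/2$, which simultaneously shows $u^*\in\overline{\mathcal U_{R/2}}$, hence $u^*\in\mathcal U_{ad}$ and $(m^*,u^*)\in\mathcal A$, and $\mathcal J(m^*,u^*)\le\mathcal J^*$. Since $\mathcal J^*$ is the infimum over $\mathcal A$, equality must hold, so $(m^*,u^*)$ is an optimal pair and the OCP admits a solution.
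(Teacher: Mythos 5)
Your overall architecture --- direct method, uniform bounds from $\mathcal{U}_{ad}\subset\overline{\mathcal{U}_{R/2}}$ together with \eqref{SSUB}, Aubin--Lions--Simon compactness, weak closedness of $\mathcal{U}_{a,b}$, and weak lower semicontinuity of $\mathcal{J}$ --- is exactly the paper's proof, and your treatment of the terminal term (strong convergence of $m_n(\cdot,T)$ in $L^2(\Omega)$) is if anything cleaner than the paper's. The one structural difference is the middle step: the paper passes to the limit in the equation \eqref{EOC1} itself, which the $m_n$ satisfy pointwise a.e., and concludes directly that the limit pair is a regular solution, using that the weak limits already place $\widetilde{m}$ in $\mathcal{M}$; you instead pass to the limit only in the weak formulation of Definition \ref{WSD} and then invoke Theorem \ref{T-SS} to upgrade the weak solution to a regular one.

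That detour contains a step that fails. To apply Theorem \ref{T-SS} you must produce a weak solution in the sense of Definition \ref{WSD}, which includes the energy inequality (3), and you claim this inequality ``passes to the limit by weak lower semicontinuity of the norms.'' It does not: the right-hand side of (3) is $\|\nabla m_0\|^2_{L^2(\Omega)}+2\|u_n\|^2_{L^2(0,T;L^2(\Omega))}$, and under the merely weak convergence $u_n\rightharpoonup u^*$ one only has $\|u^*\|_{L^2(0,T;L^2(\Omega))}\le \liminf_n\|u_n\|_{L^2(0,T;L^2(\Omega))}$, an inequality in the unhelpful direction. If the minimizing controls oscillate, so that $\liminf_n\|u_n\|^2_{L^2(0,T;L^2(\Omega))}>\|u^*\|^2_{L^2(0,T;L^2(\Omega))}$, the limit of the right-hand sides strictly exceeds $\|\nabla m_0\|^2_{L^2(\Omega)}+2\|u^*\|^2_{L^2(0,T;L^2(\Omega))}$, and inequality (3) for $(m^*,u^*)$ cannot be deduced this way. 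The gap is repairable, and the repair makes the detour unnecessary: your weak limits $m_n\rightharpoonup m^*$ in $L^2(0,T;H^3(\Omega))$ and $\partial_t m_n\rightharpoonup \partial_t m^*$ in $L^2(0,T;H^1(\Omega))$ already give $m^*\in\mathcal{M}$, so combining them with the strong convergences you can pass to the limit in the pointwise equation (products of one strongly and one weakly convergent factor, exactly as in your treatment of the weak form) and conclude that $m^*$ is itself a regular solution associated with $u^*$; Theorem \ref{T-SS} and the verification of Definition \ref{WSD}(3) are then not needed. This is precisely what the paper does.
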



In the subsequent steps, we will establish the first-order necessary optimality condition for the OCP by utilizing the classical adjoint problem approach. This approach allows us to express the optimality condition in a compact form, providing a concise framework for analysis. First, by employing the formal Lagrange method \cite{FT}, we derive the adjoint system associated with OCP as follows:
\begin{equation}\label{AS}
	\begin{cases}
		\phi_t + \Delta \phi  + |\nabla \widetilde{m}|^2\phi -2 \nabla \cdot \big((\widetilde{m}\cdot \phi)\nabla \widetilde{m}\big) + \Delta (\phi \times \widetilde{m})+(\Delta  \widetilde{m}\times \phi)-(\phi \times \widetilde{u})\\
		\hspace{1.6cm}+\  \big((\phi \times \widetilde{m})\times \widetilde{u}\big) + \big(\phi \times (\widetilde{m}\times \widetilde{u})\big)=  \nabla \cdot \left(|\nabla \widetilde{m}-\nabla m_d|^2 (\nabla \widetilde{m}-\nabla m_d)\right) \ \ \ \text{in}\ \Omega_T,\vspace{0.2cm}\\
		\frac{\partial \phi}{\partial \eta}=0 \ \ \ \text{in}\ \partial \Omega_T,\vspace{0.1cm}\\
		\phi(T)=\widetilde{m}(x,T)-m_{\Omega}(x)\ \ \ \text{in} \ \Omega.
	\end{cases}
\end{equation}
Define the weak adjoint solution space as 
$$\mathcal{Z}:=W^{1,2}(0,T;H^1(\Omega),H^1(\Omega)^*)=\{z \in L^2(0,T;H^1(\Omega))\ | \ z_t \in L^2(0,T;H^1(\Omega)^*)\},$$
where $H^1(\Omega)^*$ is the dual of the space $H^1(\Omega)$. 	Let $\langle\cdot ,\cdot \rangle$ denote the duality pairing between $H^1(\Omega)$ and $H^1(\Omega)^*$. We have the following notion of weak solution for the system \eqref{AS}.



	\begin{Def}[Weak Formulation]\label{AWSD}
	A function $\phi \in \mathcal{Z}$ is said to be a weak solution of system \eqref{CLAS} if for every $\vartheta \in L^2(0,T;H^1(\Omega))$, the following holds:
\begin{align}\label{WAF}
	&(i)\ \ \ \int_0^T \langle \phi'(t),\vartheta (t)\rangle_{H^1(\Omega)^*\times H^1(\Omega)}\ dt - \int_{\Omega_T} \nabla \phi \cdot \nabla  \vartheta\ dx \ dt+ \int_{\Omega_T} |\nabla \widetilde{m}|^2\phi \cdot \vartheta \ dx\ dt\nonumber\\
	&\hspace{2cm}+2 \int_{\Omega_T} (\widetilde{m}\cdot \phi)\nabla \widetilde{m}\cdot \nabla \vartheta\ dx\ dt- \int_{\Omega_T}\nabla (\phi \times \widetilde{m})\cdot \nabla  \vartheta\ dx \ dt+ \int_{\Omega_T}(\Delta \widetilde{m}\times \phi)\cdot \vartheta\ dx\ dt\nonumber\\
	&\hspace{2cm}-\int_{\Omega_T} (\phi \times \widetilde{u})\cdot \vartheta\ dx\ dt			+ \int_{\Omega_T} \big((\phi \times \widetilde{m})\times \widetilde{u}\big)\cdot \vartheta\ dx\ dt+ \int_{\Omega_T} \big(\phi \times (\widetilde{m}\times \widetilde{u})\big)\cdot \vartheta\ dx\ dt\nonumber\\
	&\hspace{2cm}= -\int_{\Omega_T} |\nabla \widetilde{m}-\nabla m_d|^2 (\nabla \widetilde{m}-\nabla m_d) \cdot \nabla  \vartheta\ dx\ dt,\\  
    &(ii)\ \ \phi(T)=\phi_T.\nonumber
\end{align}
\end{Def}
\begin{Thm}[Weak Solution of Adjoint System]\label{T-AWS}
	Suppose $(\widetilde{m},\widetilde{u})$ is an admissible pair, that is, $(\widetilde{m},\widetilde{u}) \in \mathcal{A}$, $m_{\Omega}\in L^2(\Omega)$ and $\nabla m_d\in L^6(0,T;L^6(\Omega))$. Then, there exists a unique weak solution $\phi \in \mathcal{Z}$ of the adjoint system \eqref{AS} in the sense of Definition \ref{AWSD}.  Moreover, the following estimate holds: 
	\begin{eqnarray}\label{AEEE}
		\lefteqn{\|\phi\|^2_{L^{\infty}(0,T;L^2(\Omega))}+ \|\phi\|^2_{L^2(0,T;H^1(\Omega))} \leq \left(\|\widetilde{m}(T)-m_\Omega\|^2_{L^2(\Omega)} +  \|\nabla \widetilde{m}-\nabla m_d\|^6_{L^6(0,T;L^6(\Omega))}\right) }\nonumber\\
		&&\hspace{0.5cm}\times \ \exp\bigg\{C  \left(\|\widetilde{m}\|^2_{L^\infty(0,T;H^2(\Omega))}\|\widetilde{m}\|^2_{L^2(0,T;H^3(\Omega))}+\|\widetilde{m}\|^2_{L^\infty(0,T;H^2(\Omega))} \|\widetilde{u}\|^2_{L^2(0,T;H^1(\Omega))}\right) \bigg\}.
	\end{eqnarray}
\end{Thm}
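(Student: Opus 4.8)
The plan is to treat \eqref{AS} as a \emph{linear} backward parabolic system for $\phi$ whose coefficients are frozen by the admissible pair $(\widetilde m,\widetilde u)\in\mathcal A$, and to construct the weak solution by a Galerkin scheme after reversing time. First I would set $\tau=T-t$ and $\psi(\tau)=\phi(T-\tau)$; since $\phi_t=-\psi_\tau$, the terminal-value problem \eqref{AS} becomes a forward problem with initial datum $\psi(0)=\widetilde m(T)-m_\Omega\in L^2(\Omega)$ and principal part $\psi_\tau-\Delta\psi$, for which the Neumann eigenfunctions of $-\Delta$ furnish a natural Galerkin basis. Because the system is linear in $\phi$, the finite-dimensional problems are linear ODEs with globally defined solutions $\phi_n$, so the whole analysis reduces to a single \emph{a priori} estimate uniform in $n$.

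The heart of the argument is the energy estimate obtained by testing the weak form \eqref{WAF} with $\vartheta=\phi$ (equivalently $\phi_n$ at the Galerkin level). Several cross-product terms drop out by the pointwise orthogonality $a\times b\perp a,b$: indeed $(\Delta\widetilde m\times\phi)\cdot\phi=0$, $(\phi\times\widetilde u)\cdot\phi=0$, $\bigl(\phi\times(\widetilde m\times\widetilde u)\bigr)\cdot\phi=0$, and in the expansion $\nabla(\phi\times\widetilde m)=\nabla\phi\times\widetilde m+\phi\times\nabla\widetilde m$ the first summand satisfies $\sum_i(\partial_i\phi\times\widetilde m)\cdot\partial_i\phi=0$. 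The Laplacian produces the good dissipative term $-\|\nabla\phi\|_{L^2}^2$, while the survivors — $\int|\nabla\widetilde m|^2|\phi|^2$, $\int(\widetilde m\cdot\phi)\nabla\widetilde m\cdot\nabla\phi$, $\int(\phi\times\nabla\widetilde m)\cdot\nabla\phi$ and $\int\bigl((\phi\times\widetilde m)\times\widetilde u\bigr)\cdot\phi$ — I would control in two dimensions by Hölder together with the Ladyzhenskaya/Gagliardo--Nirenberg inequality $\|\phi\|_{L^4}^2\le C\|\phi\|_{L^2}\|\phi\|_{H^1}$, using $|\widetilde m|=1$ and the bounds $\nabla\widetilde m\in L^\infty(0,T;L^4)$, $\Delta\widetilde m\in L^2(0,T;L^4)$ that follow from $\widetilde m\in\mathcal M$ via \eqref{SSEE2}. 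A Young inequality then absorbs the $\|\nabla\phi\|_{L^2}^2$ factors into the dissipative term and leaves coefficients of $\|\phi\|_{L^2}^2$ that are integrable in time. The source is handled by $-\int|g|^2 g\cdot\nabla\phi\le\|g\|_{L^6}^3\|\nabla\phi\|_{L^2}$ with $g=\nabla\widetilde m-\nabla m_d$, so that after Young it contributes $\|\nabla\widetilde m-\nabla m_d\|_{L^6(0,T;L^6)}^6$; this is precisely why the hypothesis strengthens $\nabla m_d$ to $L^6(0,T;L^6(\Omega))$. Grönwall's lemma then yields \eqref{AEEE}, with the exponent assembled from the time integrals of the coefficients, which reproduce the products of $\|\widetilde m\|_{L^\infty(0,T;H^2)}$, $\|\widetilde m\|_{L^2(0,T;H^3)}$ and $\|\widetilde u\|_{L^2(0,T;H^1)}$ displayed there.

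With $\phi_n$ bounded in $L^\infty(0,T;L^2(\Omega))\cap L^2(0,T;H^1(\Omega))$, I would read off from the equation that $\phi_t=-\Delta\phi-(\text{lower-order terms})+\nabla\cdot(|g|^2g)$ is bounded in $L^2(0,T;H^1(\Omega)^*)$, so $\phi_n$ is bounded in $\mathcal Z$. Extracting a subsequence with $\phi_n\rightharpoonup\phi$ weakly in $L^2(0,T;H^1)$ and weak-$*$ in $L^\infty(0,T;L^2)$, and invoking the Aubin--Lions lemma for strong convergence in $L^2(0,T;L^2(\Omega))$, I can pass to the limit in \eqref{WAF}: every term is linear in $\phi$ with coefficients fixed by $(\widetilde m,\widetilde u)$, so weak convergence suffices for most terms while strong $L^2$ convergence disposes of the products against the frozen coefficients. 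Uniqueness is immediate from linearity: the difference of two solutions solves \eqref{AS} with zero terminal datum and zero source, and the same energy estimate forces it to vanish.

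The hard part will be the second paragraph — verifying that each coefficient term is both well defined as an element of $L^2(0,T;H^1(\Omega)^*)$ and controllable with the exact powers appearing in \eqref{AEEE}. In particular one must check that the 2D embeddings applied to $\Delta\widetilde m\times\phi$ and to the quadratic gradient term $|\nabla\widetilde m|^2\phi$ leave only time coefficients that are integrable (so Grönwall closes), and that no term demands more regularity of $\widetilde m$ than \eqref{SSEE2} provides. The orthogonality cancellations are what make the estimate close without any smallness assumption, so identifying them correctly is essential.
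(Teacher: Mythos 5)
Your proposal matches the paper's proof in all essentials: the paper likewise runs a backward Galerkin scheme on Neumann eigenfunctions, exploits the same cross-product cancellations ($a\cdot(a\times b)=0$) in the energy identity, bounds the surviving terms $\int|\nabla\widetilde m|^2|\phi|^2$, $\int(\widetilde m\cdot\phi)\nabla\widetilde m\cdot\nabla\phi$, $\int\nabla(\phi\times\widetilde m)\cdot\nabla\phi$ and $\int\big((\phi\times\widetilde m)\times\widetilde u\big)\cdot\phi$ with 2D embeddings, handles the source through $\langle g,\vartheta\rangle\leq C\,\|\nabla\widetilde m-\nabla m_d\|^3_{L^6(0,T;L^6(\Omega))}\|\vartheta\|_{L^2(0,T;H^1(\Omega))}$ (which is exactly why the $L^6$ hypothesis on $\nabla m_d$ appears), and closes with Gr\"onwall, a dual-norm estimate on $\phi_t$, and Aubin--Lions--Simon compactness. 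The only organizational difference is that the paper first proves a general solvability lemma (Lemma \ref{AL-SLS}) for an arbitrary right-hand side $g\in L^2(0,T;H^1(\Omega)^*)$ and then deduces Theorem \ref{T-AWS} by substituting $g=\nabla\cdot\big(|\nabla\widetilde m-\nabla m_d|^2(\nabla\widetilde m-\nabla m_d)\big)$, whereas you fold this specific source directly into the energy estimate; the underlying computations are identical.
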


By virtue of the global existence of regular solution, we can define a \textit{reduced cost functional} $\mathcal{I}  : \mathcal{U}_{ad} \to \mathbb{R}$ by $\mathcal{I}(u)=\mathcal J(G(u),u)$. Therefore, the optimal control problem (OCP) can be redefined as follows:
\begin{equation*}\text{(MOCP)}
	\left\{	\begin{array}{lclclc}
		\text{minimize} \ \ \mathcal{I}(u)	\\
		u\in \mathcal{U}_{ad}.
	\end{array}
	\right.
\end{equation*}

Now we are ready to state the first-order optimality condition satisfied by the optimal control $\widetilde{u}\in \mathcal{U}_{ad}$.

\begin{Thm}[First Order Optimality Condition]\label{FOOCT}
	Suppose $\widetilde{u} \in \mathcal{U}_{ad}$ be an optimal control of MOCP with associated state $\widetilde{m}.$ Then there exists a unique element $\phi \in \mathcal{Z}$ corresponding to the admissible pair $(\widetilde{m},\widetilde{u})$ such that the triplet $(\phi,\widetilde{m},\widetilde{u})$ satisfies the weak adjoint formulation \eqref{WAF} and the following variational inequality holds:
\begin{eqnarray}\label{FOOC}
	\lefteqn{\int_0^T  \big(\widetilde{u}, (u-\widetilde{u}) \big)\  dt+\int_0^T  \big(\nabla \widetilde{u}, \nabla (u-\widetilde{u})\big)\  dt} \nonumber \\
	&&+ \int_0^T  \Big((\phi \times \widetilde{m})+  \widetilde{m} \times (\phi \times \widetilde{m}), (u-\widetilde{u})\Big)\ dt \geq 0, \ \ \ \ \  \forall \ u \in \mathcal{U}_{ad}.	
\end{eqnarray}
\end{Thm}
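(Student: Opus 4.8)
The plan is to obtain \eqref{FOOC} from the abstract first-order condition for the differentiable reduced functional $\mathcal I(u)=\mathcal J(G(u),u)$ minimized over $\mathcal U_{ad}$, and then to render the derivative explicit through the adjoint state $\phi$. First I would derive the abstract variational inequality $\mathcal I'(\widetilde u)(u-\widetilde u)\ge 0$ for every $u\in\mathcal U_{ad}$. Fix such a $u$ and set $v_\theta:=\widetilde u+\theta(u-\widetilde u)$ for $\theta\in(0,1)$. Since $\mathcal U_{a,b}$ is convex and contains both $\widetilde u$ and $u$, each $v_\theta$ obeys the box constraints. Because $\mathcal J(\widetilde m,\widetilde u)\le R/2<R$ and $\theta\mapsto\mathcal J(G(v_\theta),v_\theta)$ is continuous (continuity of the control-to-state map $G$ on the open set $\mathcal U$ together with continuity of $\mathcal J$), one has $v_\theta\in\mathcal U_R\subset\mathcal U$ for all sufficiently small $\theta$, so $G(v_\theta)$ exists and $\mathcal I(v_\theta)$ is well defined. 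If $\mathcal I(v_\theta)<\mathcal I(\widetilde u)$ held for some such $\theta$, then $\mathcal J(G(v_\theta),v_\theta)=\mathcal I(v_\theta)<\mathcal I(\widetilde u)=\mathcal J(\widetilde m,\widetilde u)\le R/2$, whence $v_\theta\in\mathcal U_{a,b}\cap\overline{\mathcal U_{R/2}}=\mathcal U_{ad}$, contradicting the optimality of $\widetilde u$; thus $\mathcal I(v_\theta)\ge\mathcal I(\widetilde u)$. Dividing by $\theta$ and letting $\theta\downarrow 0$, using the Fr\'echet differentiability of $\mathcal I$ inherited from that of $G$ on $\mathcal U_R$, gives $\mathcal I'(\widetilde u)(u-\widetilde u)\ge 0$.

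Next I would expand $\mathcal I'(\widetilde u)(u-\widetilde u)$ by the chain rule as $\partial_m\mathcal J(\widetilde m,\widetilde u)[\delta m]+\partial_u\mathcal J(\widetilde m,\widetilde u)(u-\widetilde u)$, where $\delta m:=G'(\widetilde u)(u-\widetilde u)$. The control part is immediate,
\[
\partial_u\mathcal J(\widetilde m,\widetilde u)(u-\widetilde u)=\int_0^T(\widetilde u,u-\widetilde u)\,dt+\int_0^T(\nabla\widetilde u,\nabla(u-\widetilde u))\,dt,
\]
which supplies the first two terms of \eqref{FOOC}. The state part reads
\[
\partial_m\mathcal J(\widetilde m,\widetilde u)[\delta m]=\int_{\Omega_T}|\nabla\widetilde m-\nabla m_d|^2(\nabla\widetilde m-\nabla m_d)\cdot\nabla\delta m\,dx\,dt+\int_\Omega(\widetilde m(T)-m_\Omega)\cdot\delta m(T)\,dx,
\]
and the purpose of the adjoint construction is to eliminate the implicitly defined $\delta m$ from this expression.

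The central step is the adjoint duality. Here $\delta m$ solves, weakly and with $\delta m(0)=0$ and homogeneous Neumann data, the linearization of \eqref{NLP} about $(\widetilde m,\widetilde u)$, which I write as $\partial_t\delta m=\mathcal L_{\widetilde m,\widetilde u}\delta m+\mathcal B_{\widetilde m}(u-\widetilde u)$, where $\mathcal B_{\widetilde m}h=\widetilde m\times h-\widetilde m\times(\widetilde m\times h)$ collects the terms linear in the control perturbation and $\mathcal L_{\widetilde m,\widetilde u}$ is the linearized state operator. The adjoint system \eqref{AS} is precisely the formal adjoint of $\partial_t-\mathcal L_{\widetilde m,\widetilde u}$ whose source and terminal value $\phi(T)=\widetilde m(T)-m_\Omega$ are chosen to reproduce $\partial_m\mathcal J$. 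I would therefore test the weak linearized equation with $\phi$ and the weak adjoint formulation \eqref{WAF} with $\vartheta=\delta m$, add the two identities, and integrate by parts in time using $\int_0^T\langle\phi',\delta m\rangle\,dt+\int_0^T\langle\delta m',\phi\rangle\,dt=(\phi(T),\delta m(T))-(\phi(0),\delta m(0))$ together with $\delta m(0)=0$. All the interior coupling terms — those carrying $|\nabla\widetilde m|^2$, $(\widetilde m\cdot\phi)\nabla\widetilde m$, $\phi\times\widetilde m$, $\Delta\widetilde m\times\phi$ and the control cross-terms — cancel against their linearized counterparts, the spatial integrations by parts being justified by the homogeneous Neumann conditions, while the terminal pairing delivers exactly $\int_\Omega(\widetilde m(T)-m_\Omega)\cdot\delta m(T)\,dx$. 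What survives is
\[
\partial_m\mathcal J(\widetilde m,\widetilde u)[\delta m]=\int_{\Omega_T}\phi\cdot\mathcal B_{\widetilde m}(u-\widetilde u)\,dx\,dt=\int_{\Omega_T}\phi\cdot\big(\widetilde m\times(u-\widetilde u)-\widetilde m\times(\widetilde m\times(u-\widetilde u))\big)\,dx\,dt.
\]

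Finally I would simplify the cross products. Using the scalar triple-product identity $a\cdot(b\times c)=(a\times b)\cdot c$, one gets $\phi\cdot(\widetilde m\times h)=(\phi\times\widetilde m)\cdot h$ and $-\phi\cdot(\widetilde m\times(\widetilde m\times h))=(\widetilde m\times(\phi\times\widetilde m))\cdot h$ for $h=u-\widetilde u$, so that $\partial_m\mathcal J(\widetilde m,\widetilde u)[\delta m]=\int_0^T\big((\phi\times\widetilde m)+\widetilde m\times(\phi\times\widetilde m),\,u-\widetilde u\big)\,dt$. Substituting this and the control part into the abstract inequality yields \eqref{FOOC}, while the uniqueness of $\phi$ is inherited from Theorem \ref{T-AWS}. \emph{The main obstacle} is the adjoint duality of the third paragraph: it requires $\delta m=G'(\widetilde u)(u-\widetilde u)$ to be admissible as the test function $\vartheta\in L^2(0,T;H^1(\Omega))$ in \eqref{WAF} and $\phi\in\mathcal Z$ to be admissible in the weak linearized equation, and it demands the careful, term-by-term matching of the many nonlinear couplings of \eqref{AS} with their linearized analogues. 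The regularity making this legitimate is furnished by $\widetilde m\in\mathcal M$ with the uniform bound \eqref{SSUB} (so that $\nabla\widetilde m$ and $\Delta\widetilde m$ lie in the spaces needed to control the coupling terms), by the control-to-state differentiability placing $\delta m$ in the requisite space, and by Theorem \ref{T-AWS} guaranteeing $\phi\in\mathcal Z$.
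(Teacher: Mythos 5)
Your proposal is correct and follows essentially the same route as the paper: the abstract condition $\mathcal I'(\widetilde u)(u-\widetilde u)\ge 0$, the chain-rule split into control and state parts, elimination of $\delta m=G'(\widetilde u)(u-\widetilde u)$ by testing the weak adjoint formulation \eqref{WAF} with $\vartheta=\delta m$ and the linearized system \eqref{LS2} with $\phi$ (the paper's identities \eqref{FOE1}--\eqref{FOE2} together with \eqref{TIP}), and the final triple-product simplification, with uniqueness of $\phi$ from Theorem \ref{T-AWS}. Your contradiction argument for why the perturbed control may be compared with $\widetilde u$ despite the constraint $\mathcal J\le R/2$ is in fact a slightly more careful justification than the paper's bare assertion that $\widetilde u+\epsilon(u-\widetilde u)\in\mathcal U_{ad}$ for small $\epsilon$, but it does not change the structure of the proof.
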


\noindent Next, we will discuss a local second-order optimality condition satisfied by an optimal control.

Let $\widetilde{u} \in \mathcal{U}_{ad}$ satisfy the first order necessary optimality condition given by the variational inequality \eqref{FOOC} and suppose $\mathcal{J}''(\widetilde{u})[h,h]>0$ for all directions $h\in L^2(0,T;H^1(\Omega)) \ \backslash \ \{0\}$. Then $\widetilde{u}$ is a strict local minimum of the functional $\mathcal{J}$ on the set $\mathcal{U}_{ad}$. But such second order conditions are too restrictive as it requires $h$ to be all controls in $L^2(0,T;H^1(\Omega))$ excluding $\{0\}$. However, we can avoid this restriction by defining a cone of critical directions which will suffices the needs for local optimality. For more details on the cone of critical direction, one may refer to Chapter 4 of \cite{FT}.

\begin{Def}{(Critical Cone)}
	For any control $\widetilde{u}\in \mathcal{U}_{ad}$, let $\Lambda(\widetilde{u})$ denotes the set of all $u\in L^2(0,T;H^1(\Omega))$ such that for almost all $(x,t)\in \Omega_T$, 
	$$u(x,t)=
	\begin{cases}
		\geq 0 \ \ \ \text{if}\ \ \widetilde{u}(x,t)=	a(x,t),\\
		\leq 0 \ \ \ \text{if}\ \ \widetilde{u}(x,t)= b(x,t).
	\end{cases}
	$$
	The set of controls $\Lambda(\widetilde{u})$ is called the cone of critical directions.
\end{Def}

In order to show the second order optimality condition, we need the  Fr\'echet differentiability of both the control-to-state and control-to-costate operators(defined later), and that of their Lipschitz continuity. With all these results, we established the second-order conditions for local optimality as follows.

\begin{Thm}[Local Optimality Condition]\label{T-SOLO}
	Let $\widetilde{u}\in \mathcal{U}_{ad}$ be any control satisfying the variational inequality \eqref{FOOC}. Suppose there exists a constant $\delta>0$ such that $\mathcal{J}''(\widetilde{u})[h,h]\geq \delta \ \|h\|^2_{L^2(0,T;H^1(\Omega))}$ for all $h\in \Lambda(\widetilde{u})$, that is,
	\begin{flalign}\label{AOSOD}
		&\int_{\Omega_T} h^2\ dx\ dt+\int_{\Omega_T} |\nabla h|^2\ dx\ dt+ \int_{\Omega_T} \big(\phi'_{\widetilde{u}}[h]\times \widetilde{m}\big) \cdot h\ dx\ dt + \int_{\Omega_T} \big(\phi_{\widetilde{u}} \times m'_{\widetilde{u}}[h]\big)\cdot h\ dx\ dt\nonumber\\
		&\hspace{0.8cm} + \int_{\Omega_T} \big(m'_{\widetilde{u}}[h] \times (\phi_{\widetilde{u}}\times m_{\widetilde{u}})\big)\cdot h \ dx\ dt
		+ \int_{\Omega_T} \big(m_{\widetilde{u}} \times (\phi'_{\widetilde{u}}[h] \times m_{\widetilde{u}})\big)\cdot h\ dx\ dt \nonumber\\
		&\hspace{0.8cm}+ \int_{\Omega_T} \big(m_{\widetilde{u}} \times (\phi_{\widetilde{u}} \times m'_{\widetilde{u}}[h])\big)\cdot h\ dx\ dt \geq \delta \int_{\Omega_T} h^2 \ dx\ dt +\delta  \int_{\Omega_T} |\nabla h|^2 \ dx\ dt\ \ \ \ \ \ \ \forall \ h\in \Lambda(\widetilde{u}).
	\end{flalign}
	Then there exist $\epsilon>0$ and $\sigma>0$ such that for every $u\in\mathcal{U}_{ad}$ with $\|u-\widetilde{u}\|_{L^2(0,T;H^1(\Omega))} \leq \epsilon$, we have
	\begin{equation*}\label{QGC}
		\mathcal{J}(u) \geq \mathcal{J}(\widetilde{u}) + \sigma \ \|u-\widetilde{u}\|^2_{L^2(0,T;L^2(\Omega))}.
	\end{equation*} 
	In particular, this implies that $\widetilde{u}$ is a local minimum of the functional $\mathcal{J}$ on the set of admissible control $\mathcal{U}_{ad}$.
\end{Thm}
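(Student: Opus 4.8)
The plan is to argue by contradiction, exploiting the fact that the bilinear form $\mathcal J''(\widetilde u)[\cdot,\cdot]$ appearing in \eqref{AOSOD} splits into the coercive quadratic part $\int_{\Omega_T}h^2+\int_{\Omega_T}|\nabla h|^2=\|h\|^2_{L^2(0,T;H^1(\Omega))}$ and a collection of indefinite cross-product terms, each carrying at least one factor of the linearized state $m'_{\widetilde u}[h]$ or the linearized costate $\phi'_{\widetilde u}[h]$. Suppose the quadratic growth conclusion fails. Then for each $k\in\mathbb N$ there is a control $u_k\in\mathcal U_{ad}$ with $\rho_k:=\|u_k-\widetilde u\|_{L^2(0,T;H^1(\Omega))}\to 0$ and $\mathcal J(u_k)<\mathcal J(\widetilde u)+\tfrac1k\rho_k^2$. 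Set $h_k:=(u_k-\widetilde u)/\rho_k$, so that $\|h_k\|_{L^2(0,T;H^1(\Omega))}=1$, and extract a subsequence (not relabeled) with $h_k\rightharpoonup h$ weakly in $L^2(0,T;H^1(\Omega))$.

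First I would verify that the weak limit belongs to the critical cone, $h\in\Lambda(\widetilde u)$: on the set where $\widetilde u=a$ one has $u_k\ge a$, hence $h_k\ge 0$, and the convex set of nonnegative functions is strongly, and therefore weakly, closed, so the sign survives the weak limit; the case $\widetilde u=b$ is symmetric. Next, since $\mathcal U$ is open and $u_k\to\widetilde u$, for $k$ large the whole segment $\{u^k_\theta:=\widetilde u+\theta(u_k-\widetilde u):\theta\in[0,1]\}$ lies in $\mathcal U_R$ (here the passage from $\overline{\mathcal U_{R/2}}$ to $\mathcal U_R$ is exactly what guarantees differentiability along the segment), so a second-order Taylor expansion with Lagrange remainder gives $\mathcal J(u_k)=\mathcal J(\widetilde u)+\mathcal J'(\widetilde u)[u_k-\widetilde u]+\tfrac12\mathcal J''(u^k_{\theta_k})[u_k-\widetilde u,u_k-\widetilde u]$ for some $\theta_k\in(0,1)$. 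Because $\widetilde u$ satisfies the variational inequality \eqref{FOOC} and $u_k\in\mathcal U_{ad}$, the first-order term is nonnegative; dividing the resulting inequality by $\rho_k^2$ yields $\tfrac12\mathcal J''(u^k_{\theta_k})[h_k,h_k]<\tfrac1k$.

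The heart of the argument is passing to the limit in $\mathcal J''(u^k_{\theta_k})[h_k,h_k]$. Since $u^k_{\theta_k}\to\widetilde u$ in $L^2(0,T;H^1(\Omega))$, the Lipschitz continuity of the control-to-state and control-to-costate operators forces $m_{u^k_{\theta_k}}\to m_{\widetilde u}$ and $\phi_{u^k_{\theta_k}}\to\phi_{\widetilde u}$ in the relevant norms, while the Lipschitz dependence of their Fr\'echet derivatives lets me replace the linearizations at $u^k_{\theta_k}$ by those at $\widetilde u$ up to a vanishing error. The linear maps $h\mapsto m'_{\widetilde u}[h]$ and $h\mapsto\phi'_{\widetilde u}[h]$ are compact by an Aubin--Lions argument, so $h_k\rightharpoonup h$ upgrades to strong convergence $m'_{\widetilde u}[h_k]\to m'_{\widetilde u}[h]$ and $\phi'_{\widetilde u}[h_k]\to\phi'_{\widetilde u}[h]$; pairing each strongly convergent factor against the bounded factor $h_k$ shows every indefinite term in \eqref{AOSOD} converges to its value at $h$. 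Combined with the weak lower semicontinuity of the coercive part, this gives $\mathcal J''(\widetilde u)[h,h]\le\liminf_k\mathcal J''(u^k_{\theta_k})[h_k,h_k]\le 0$.

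It remains to exclude both possibilities for $h$. If $h\ne 0$, then $h\in\Lambda(\widetilde u)$ and hypothesis \eqref{AOSOD} force $\mathcal J''(\widetilde u)[h,h]\ge\delta\|h\|^2_{L^2(0,T;H^1(\Omega))}>0$, contradicting $\mathcal J''(\widetilde u)[h,h]\le 0$. If $h=0$, then each indefinite term carries a factor $m'_{\widetilde u}[h_k]$ or $\phi'_{\widetilde u}[h_k]$ that now converges strongly to $0$, so $\mathcal J''(u^k_{\theta_k})[h_k,h_k]\to\|h_k\|^2_{L^2(0,T;H^1(\Omega))}=1$ by the normalization, contradicting $\mathcal J''(u^k_{\theta_k})[h_k,h_k]<2/k\to 0$. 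Either way we reach a contradiction, which establishes the quadratic growth inequality and hence local optimality of $\widetilde u$. The main obstacle I anticipate is precisely the limit passage of the third paragraph: one must simultaneously tame the drift of the base point $u^k_{\theta_k}$, through the Lipschitz continuity of the states, costates and their derivatives, and exploit the compactness of the linearized solution maps to convert weak convergence of $h_k$ into strong convergence of $m'_{\widetilde u}[h_k]$ and $\phi'_{\widetilde u}[h_k]$; verifying the requisite compact embeddings for the linearized LLG and adjoint systems on the two-dimensional domain is the delicate point.
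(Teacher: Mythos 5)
Your proposal is correct, but it follows a genuinely different route from the paper. The paper's proof is direct: it writes the second-order Taylor expansion $\mathcal{I}(u)=\mathcal{I}(\widetilde{u})+\mathcal{I}'(\widetilde{u})(u-\widetilde{u})+\tfrac12\mathcal{I}''\big(\widetilde{u}+\theta(u-\widetilde{u})\big)(u-\widetilde{u})^2$, drops the first-order term via \eqref{FOOC}, observes that $u-\widetilde{u}\in\Lambda(\widetilde{u})$ for every $u\in\mathcal{U}_{ad}$ so that \eqref{AOSOD} applies \emph{directly} to $v=u-\widetilde{u}$, and then absorbs the base-point drift through the single estimate $\big|\big[\mathcal{I}''\big(\widetilde{u}+\theta v\big)-\mathcal{I}''(\widetilde{u})\big]v^2\big|\le C\,\|v\|^3_{L^2(0,T;H^1(\Omega))}$, which is proved from the Lipschitz continuity of the control-to-state and control-to-costate operators and of their Fr\'echet derivatives (estimates \eqref{LCCTSO}, \eqref{LCI}, \eqref{LC-CTS}, \eqref{LC-AD}); choosing $\|v\|\le\delta/(2C)$ yields the conclusion with the explicit constant $\sigma=\delta/4$. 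You instead run the classical contradiction--normalization argument: negate the quadratic growth, normalize $h_k=(u_k-\widetilde{u})/\rho_k$, extract a weak limit $h\in\Lambda(\widetilde{u})$, and pass to the limit in $\mathcal{J}''(u^k_{\theta_k})[h_k,h_k]$, which requires, beyond the same Lipschitz results, the weak closedness of the critical cone and the compactness of the linearized maps $h\mapsto m'_{\widetilde{u}}[h]$ and $h\mapsto\phi'_{\widetilde{u}}[h]$ (which does hold here: these maps are bounded into $\mathcal{M}$ and $\mathcal{Z}$ respectively, and Aubin--Lions plus linearity of the systems \eqref{LS2} and \eqref{ASD} upgrades weak convergence of $h_k$ to strong convergence of the linearized states and costates, identifying the limits by uniqueness). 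What your approach buys is robustness: it is the standard Casas--Tr\"oltzsch scheme and would survive a two-norm discrepancy, i.e.\ a coercivity assumption in a norm weaker than the one in which the derivatives are Lipschitz. What it costs is exactly the compactness machinery you flag as the delicate point, plus the loss of explicit constants $\epsilon,\sigma$. Since the paper assumes \eqref{AOSOD} in the same $L^2(0,T;H^1(\Omega))$ norm that governs all the Lipschitz estimates, the direct absorption argument suffices and is shorter; your contradiction argument is heavier than necessary in this setting, but it is complete and each step you outline (sign-preservation under weak limits, nonnegativity of the first-order term, the two-case dichotomy $h\neq0$ versus $h=0$) is sound.
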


Theorem \ref{T-SOLO} establishes that a control input $u\in \mathcal{U}_{ad}$, which satisfies the variational inequality along with a second-order sufficient condition \eqref{AOSOD} defined on a cone of critical directions, acts as a local minimizer for the functional $\mathcal{J}$. However, the global optimality and uniqueness of such a control input for MOCP are still uncertain. To address these questions, we employ a methodology inspired by the work presented in \cite{ADH} for a semilinear elliptic control problem, and for related results refer to \cite{MEPK}, \cite{SK}. This approach allows us to establish that an admissible control input that satisfies the variational inequality \eqref{FOOC} and a condition involving the adjoint solution, yields a global optimal control for the MOCP, which is another major contribution of this paper.

\begin{Thm}[Global Optimality Condition]\label{T-GO}
	Let $\widetilde{u} \in \mathcal{U}_{ad}$ be a control with the associated state $\widetilde{m}\in \mathcal{M}$ and the adjoint state $\phi\in \mathcal{Z}$. Suppose that the triplet $(\widetilde{u},\widetilde{m},\phi)$ satisfies the variational inequality \eqref{FOOC}, and the adjoint system fulfills the following condition:
	\begin{equation}\label{GO-C}
		C(\Omega,T)\ \left\{ 1+ \|\widetilde{m}\|_{L^\infty(0,T;H^2(\Omega))} +\|\widetilde{u}\|_{L^2(0,T;H^1(\Omega))} \right\} \ \|\phi\|_{L^2(0,T;L^2(\Omega))} \leq \frac{1}{2}.
	\end{equation}
	Then $\widetilde{u}\in \mathcal{U}_{ad}$ is a global optimal control of MOCP. Moreover, if the inequality in condition \eqref{GO-C} is strict, then the global optimum $\widetilde{u}$ is unique.
\end{Thm}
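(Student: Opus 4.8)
The plan is to adapt the convexity--plus--adjoint argument of \cite{ADH} to the present nonlinear setting. Fix an arbitrary admissible control $u\in\mathcal U_{ad}$ with regular state $m_u=G(u)$ (regular since $\mathcal J(m_u,u)\le R/2<\infty$ forces $\nabla m_u\in L^4(0,T;L^4(\Omega))$, whence Theorem \ref{T-SS} applies), and set $w:=m_u-\widetilde m$ and $v:=u-\widetilde u$. Note $w(0)=0$, because $m_u$ and $\widetilde m$ share the datum $m_0$, and that both $u,\widetilde u$ lie in $\mathcal U_R$. The goal is $\mathcal J(m_u,u)\ge\mathcal J(\widetilde m,\widetilde u)$. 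Each of the four terms of \eqref{CF} is convex (the quartic $\frac14|\nabla m-\nabla m_d|^4$ is a convex function of the affine map $m\mapsto\nabla m-\nabla m_d$, the terminal term and the control regularizers are quadratic), so expanding each term about $(\widetilde m,\widetilde u)$ and discarding the nonnegative Bregman remainder of the quartic term and the nonnegative $\tfrac12\|w(T)\|^2_{L^2(\Omega)}$ gives
\[
\mathcal J(m_u,u)-\mathcal J(\widetilde m,\widetilde u)\ \ge\ A_1+B_1+(\widetilde u,v)_{L^2}+(\nabla\widetilde u,\nabla v)_{L^2}+\tfrac12\|v\|^2_{L^2(0,T;H^1(\Omega))},
\]
where $A_1:=\int_{\Omega_T}|\nabla\widetilde m-\nabla m_d|^2(\nabla\widetilde m-\nabla m_d)\cdot\nabla w$ and $B_1:=\int_\Omega(\widetilde m(T)-m_\Omega)\cdot w(T)$ are the linearizations of the tracking and terminal costs.

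Next I would convert $A_1+B_1$ into a control expression using the adjoint equation. Testing the weak adjoint formulation \eqref{WAF} with $\vartheta=w\in L^2(0,T;H^1(\Omega))$ makes its right-hand side equal to $-A_1$, so $A_1=-[\text{LHS of }\eqref{WAF}]_{\vartheta=w}$. Integrating $\int_0^T\langle\phi',w\rangle$ by parts in time and using $w(0)=0$ together with the terminal condition $\phi(T)=\widetilde m(T)-m_\Omega$, the boundary pairing $\langle\phi(T),w(T)\rangle$ cancels $B_1$. In the remaining integral I substitute the difference of the equivalent state equations \eqref{EP} for $(m_u,u)$ and $(\widetilde m,\widetilde u)$, namely $w_t-\Delta w=D_mN[w]+D_uN[v]+\mathcal R_N$, where $N(m,u)=|\nabla m|^2m+m\times\Delta m+m\times u-m\times(m\times u)$, $D_mN,D_uN$ are its linearizations at $(\widetilde m,\widetilde u)$, and $\mathcal R_N$ is the (quadratic and higher-order) remainder. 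Since the spatial part of \eqref{WAF} is, by the formal-Lagrangian construction of \eqref{AS}, precisely the formal adjoint of $D_mN$, integrating the Laplacian by parts (Neumann conditions hold for both $w$ and $\phi$) cancels every $D_mN$-contribution and leaves $A_1+B_1=\int_{\Omega_T}\phi\cdot D_uN[v]+\langle\mathcal R_N,\phi\rangle$. Because $D_uN[v]=\widetilde m\times v-\widetilde m\times(\widetilde m\times v)$, the triple-product identities of Lemma \ref{FS} with $|\widetilde m|=1$ turn the first integral into $\int_{\Omega_T}\big((\phi\times\widetilde m)+\widetilde m\times(\phi\times\widetilde m)\big)\cdot v$, which is exactly the adjoint contribution in \eqref{FOOC}. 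Adding $(\widetilde u,v)+(\nabla\widetilde u,\nabla v)$ and invoking the first-order variational inequality \eqref{FOOC} (legitimate since $u\in\mathcal U_{ad}$), those three terms are $\ge0$, yielding
\[
\mathcal J(m_u,u)-\mathcal J(\widetilde m,\widetilde u)\ \ge\ \langle\mathcal R_N,\phi\rangle+\tfrac12\|v\|^2_{L^2(0,T;H^1(\Omega))}\ \ge\ -|\langle\mathcal R_N,\phi\rangle|+\tfrac12\|v\|^2_{L^2(0,T;H^1(\Omega))}.
\]

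The final and hardest step is the remainder estimate. The summands of $\mathcal R_N$ are schematically $(\nabla\widetilde m:\nabla w)\,w$, $|\nabla w|^2(\widetilde m+w)$, $w\times\Delta w$, and the cross products $w\times v$, $\widetilde m\times(w\times v)$, $w\times(\widetilde m\times v)$, $w\times(w\times\widetilde u)$, $w\times(w\times v)$, each at least quadratic in $(w,v)$. Pairing with $\phi\in L^2(0,T;L^2(\Omega))$ and using Hölder's inequality, the 2D embeddings $H^2(\Omega)\hookrightarrow L^\infty(\Omega)\cap W^{1,4}(\Omega)$, and the cross-product bounds of Lemma \ref{FS}, each term is dominated by $\|\phi\|_{L^2(L^2)}$ times a product of norms of $w$ (measured in $\mathcal M$, using $\Delta w\in L^2(L^2)$ for the delicate $w\times\Delta w$), $v$, $\widetilde m$ and $\widetilde u$; for the genuinely cubic terms one factor $\|w\|_{\mathcal M}$ is absorbed by the uniform a priori bound \eqref{SSUB}. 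Converting the $w$-norms into $v$-norms through the Lipschitz continuity of the control-to-state operator, $\|w\|_{\mathcal M}=\|G(u)-G(\widetilde u)\|_{\mathcal M}\le C\|v\|_{L^2(0,T;H^1(\Omega))}$, I expect the coefficients to organize into exactly the bracket of \eqref{GO-C}, giving
\[
|\langle\mathcal R_N,\phi\rangle|\ \le\ C(\Omega,T)\big\{1+\|\widetilde m\|_{L^\infty(0,T;H^2(\Omega))}+\|\widetilde u\|_{L^2(0,T;H^1(\Omega))}\big\}\,\|\phi\|_{L^2(L^2)}\,\|v\|^2_{L^2(0,T;H^1(\Omega))}.
\]
Then \eqref{GO-C} forces the coefficient of $\|v\|^2$ to be at most $\tfrac12$, so $\mathcal J(m_u,u)-\mathcal J(\widetilde m,\widetilde u)\ge(\tfrac12-\tfrac12)\|v\|^2=0$, proving global optimality; if \eqref{GO-C} is strict the coefficient is $c_0<\tfrac12$ in magnitude, so $\mathcal J(m_u,u)-\mathcal J(\widetilde m,\widetilde u)\ge(\tfrac12-c_0)\|v\|^2_{L^2(0,T;H^1(\Omega))}>0$ whenever $u\neq\widetilde u$, giving uniqueness. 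The main obstacle is precisely this remainder estimate: controlling the strongly nonlinear terms (especially $w\times\Delta w$ and the quartic $L^4$-type contributions) so that, after the Lipschitz conversion $\|w\|_{\mathcal M}\lesssim\|v\|$ and the a priori bound \eqref{SSUB}, all constants collapse into the linear bracket $\{1+\|\widetilde m\|_{L^\infty(H^2)}+\|\widetilde u\|_{L^2(H^1)}\}$ appearing in \eqref{GO-C} rather than a higher power of those norms.
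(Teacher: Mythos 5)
Your proposal is correct and follows essentially the same route as the paper's proof: expand the cost difference about $(\widetilde m,\widetilde u)$ (your Bregman/convexity discard of the quartic and terminal remainders is just a tidier version of the paper's explicit expansion plus Young's inequality), absorb the first-order terms with the variational inequality \eqref{FOOC}, use the adjoint weak formulation tested against $w$ (equivalently, the paper tests the difference system \eqref{GO-S} against $\phi$) so that the linearized tracking and terminal terms collapse into the purely nonlinear remainder paired with $\phi$, and then estimate that remainder via H\"older, the 2D embeddings, $|\shat{m}|\le 2$/\eqref{SSUB}, and the Lipschitz bound \eqref{LCCTSO} to obtain exactly the bracket in \eqref{GO-C}. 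The remainder terms you list coincide with those in the paper's \eqref{GO-6}, and your concluding absorption argument (including the strict-inequality uniqueness step) matches the paper's.
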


\subsection{Function Spaces and Inequalities}\label{FS}

In this subsection, we give some basic cross-product properties in Lemma \ref{CPP}, the equality of norms in Lemma \ref{EN}, and some norm estimates in Lemma \ref{PROP2} and \ref{L-CP}, that we have used throughout this paper. 

\begin{Lem}\label{CPP}
	Let $a,b$ and $c$ be three vectors of $\mathbb{R}^3$, then the following vector identities hold: $a\cdot(b \times c)=-(b \times a)\cdot c$,  $a \cdot (a \times b)=0$,  $a \times (b \times c)=(a \cdot c) b - (a \cdot b) c.$ Moreover, assume that $1 \leq r,s \leq \infty, \ (1/r)+(1/s)=1$ and $p\geq 1$, then if $f\in L^{pr}(\Omega)$ and $g\in L^{ps}(\Omega),$ we have
	\begin{equation}\label{ES0}
	\|f \times g\|_{L^p(\Omega)} \leq \|f\|_{L^{pr}(\Omega)} \|g\|_{L^{ps}(\Omega)}.	
	\end{equation}	
\end{Lem}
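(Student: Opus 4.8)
The plan is to dispatch the three algebraic identities first by reducing them to properties of the scalar triple product, and then to obtain the $L^p$ bound from a pointwise estimate followed by H\"older's inequality.

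For the identities, I would begin by recalling that the scalar triple product $a \cdot (b \times c)$ coincides with the determinant of the matrix whose rows are $a, b, c$, and is therefore invariant under cyclic permutations and antisymmetric under the interchange of any two arguments. The first identity $a \cdot (b \times c) = -(b \times a) \cdot c$ then follows by writing $(b \times a) \cdot c = c \cdot (b \times a) = -\,c \cdot (a \times b)$ via antisymmetry of the cross product, and identifying $c \cdot (a \times b) = a \cdot (b \times c)$ through the cyclic property. The second identity $a \cdot (a \times b) = 0$ is immediate, since it is the determinant of a matrix with two equal rows (equivalently, $a \times b \perp a$). The third, the BAC--CAB expansion $a \times (b \times c) = (a \cdot c)\,b - (a \cdot b)\,c$, I would verify by a direct componentwise computation, comparing the $i$-th entries of both sides from the definition of the cross product; this is the only identity requiring an explicit coordinate calculation, and it is entirely routine.

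For the estimate \eqref{ES0}, the key is the pointwise inequality $|f(x) \times g(x)| \leq |f(x)|\,|g(x)|$, valid for a.e. $x$ since $|f \times g| = |f|\,|g|\,|\sin\theta| \leq |f|\,|g|$. Raising to the $p$-th power and integrating gives $\|f \times g\|_{L^p(\Omega)}^p \leq \int_\Omega |f|^p\,|g|^p\, dx$. I would then apply H\"older's inequality to the product $|f|^p \cdot |g|^p$ with the conjugate exponents $r$ and $s$, noting that $|f|^p \in L^r(\Omega)$ and $|g|^p \in L^s(\Omega)$ by the hypotheses $f \in L^{pr}(\Omega)$ and $g \in L^{ps}(\Omega)$; this yields $\int_\Omega |f|^p\,|g|^p\, dx \leq \big\|\,|f|^p\,\big\|_{L^r(\Omega)}\,\big\|\,|g|^p\,\big\|_{L^s(\Omega)} = \|f\|_{L^{pr}(\Omega)}^p\,\|g\|_{L^{ps}(\Omega)}^p$. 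Taking $p$-th roots gives the claimed bound.

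Since every step is standard, I do not anticipate a genuine obstacle; the only points demanding care are the sign bookkeeping in the first triple-product identity and the correct pairing of exponents when invoking H\"older's inequality, where the hypothesis $(1/r)+(1/s)=1$ is precisely what matches $|f|^p$ and $|g|^p$ with the spaces $L^{pr}(\Omega)$ and $L^{ps}(\Omega)$. The boundary cases $r=\infty$ or $s=\infty$ are absorbed by the usual conventions in H\"older's inequality.
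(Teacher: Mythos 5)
Your proof is correct and follows essentially the same route as the paper: the paper dispatches the vector identities as standard facts and simply remarks that \eqref{ES0} follows by applying H\"older's inequality, which is precisely your argument once the pointwise bound $|f \times g| \leq |f|\,|g|$ is combined with H\"older's inequality for the conjugate exponents $r$ and $s$. You merely spell out the details that the paper leaves implicit.
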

The proof of estimate \eqref{ES0} can be readily derived by applying H\"older's inequality.

The $L^2$ theory of Laplace operator with Neumann  boundary condition leads to the following inequality of norms that will be quite useful. 
\begin{Lem}[see, \cite{KW}]\label{EN}
	Let $\Omega$ be a bounded smooth domain in $\mathbb{R}^n$ and $k \in \mathbb{N}$. There exists a constant $C_{k,n}>0$ such that for all $m \in H^{k+2}(\Omega)$ and $\frac{\partial m}{\partial \eta}\big|_{\partial \Omega}=0,$ it holds that
	\begin{eqnarray*}\label{ES1}
		\|m\|_{H^{k+2}(\Omega)} \leq C_{k,n} \left(\|m\|_{L^2(\Omega)}+ \|\Delta m\|_{H^k(\Omega)}\right).	
	\end{eqnarray*}
\end{Lem}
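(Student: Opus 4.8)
This inequality is the standard a priori estimate from the $L^2$ elliptic regularity theory of the Neumann Laplacian, so the plan is to reduce it to the classical interior-plus-boundary regularity machinery and then propagate it in $k$ by induction. First I would note that since $\Delta$ and the normal-derivative boundary operator act componentwise on the $\mathbb{R}^3$-valued field $m$, it suffices to prove the estimate for scalar $m$ and sum over the three components, which removes any vector-valued complication. The induction is on $k$, with base case $k=0$, i.e.\ the bound $\|m\|_{H^2(\Omega)} \le C\big(\|m\|_{L^2(\Omega)} + \|\Delta m\|_{L^2(\Omega)}\big)$ for $m \in H^2(\Omega)$ with $\partial m/\partial\eta = 0$ on $\partial\Omega$.

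For the base case I would argue by localization. Away from the boundary, the interior estimate $\|m\|_{H^2(\Omega')} \le C(\|\Delta m\|_{L^2(\Omega)} + \|m\|_{L^2(\Omega)})$ for $\Omega' \Subset \Omega$ follows from the method of difference quotients applied to the weak form $\int_\Omega \nabla m \cdot \nabla\varphi\,dx = -\int_\Omega (\Delta m)\varphi\,dx$. Near a boundary point I would flatten $\partial\Omega$ by a smooth change of variables and use tangential difference quotients: testing the transformed weak identity against tangential difference quotients of $m$ controls all second derivatives involving at least one tangential direction, and here the homogeneous Neumann condition is exactly what guarantees that no boundary term obstructs the integration by parts. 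The single remaining pure-normal second derivative $\partial_{\eta\eta} m$ is then recovered algebraically from the equation, since $\partial_{\eta\eta} m = \Delta m - (\text{sum of already-controlled tangential second derivatives}) + (\text{lower order})$. A partition of unity subordinate to a finite cover of $\overline{\Omega}$ glues the interior and boundary estimates into the global $k=0$ statement.

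For the inductive step, assume the estimate holds at index $k-1$. The classical elliptic shift estimate for the Neumann problem, obtained by differentiating the equation tangentially and repeating the difference-quotient argument one order higher, gives $\|m\|_{H^{k+2}(\Omega)} \le C\big(\|\Delta m\|_{H^k(\Omega)} + \|m\|_{H^k(\Omega)}\big)$. It then remains only to absorb the lower-order term $\|m\|_{H^k}$: by the inductive hypothesis (legitimate since $\Delta m \in H^k \subset H^{k-1}$) one has $\|m\|_{H^k(\Omega)} \le \|m\|_{H^{k+1}(\Omega)} \le C\big(\|m\|_{L^2(\Omega)} + \|\Delta m\|_{H^{k-1}(\Omega)}\big) \le C\big(\|m\|_{L^2(\Omega)} + \|\Delta m\|_{H^k(\Omega)}\big)$. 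Substituting this into the shift estimate yields the claimed inequality at order $k$, closing the induction.

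The main obstacle lies entirely in the boundary regularity underlying the base case and the shift estimate: making the tangential difference-quotient argument rigorous after flattening $\partial\Omega$, and checking that the homogeneous Neumann condition is preserved under the tangential operations and furnishes the vanishing of the boundary integrals. Everything else, namely the interior estimate, the algebraic recovery of $\partial_{\eta\eta}m$, and the inductive absorption of lower-order norms, is routine. Since this is precisely the content of the Agmon--Douglis--Nirenberg theory for the Neumann problem on a smooth bounded domain, in the paper one may simply invoke it as cited in \cite{KW}; the above merely indicates the self-contained derivation.
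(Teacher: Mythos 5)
Your proposal is correct in outline, but note that the paper itself offers no proof of this lemma at all: it is quoted directly from \cite{KW} and used as a black box (its only role in the paper is to justify the equivalent norm $\|m\|_{L^2(\Omega)}+\|\Delta m\|_{H^k(\Omega)}$ on $H^{k+2}(\Omega)$). Your sketch is the standard self-contained derivation underlying that citation: interior difference-quotient estimates, boundary flattening with tangential difference quotients, algebraic recovery of the pure normal second derivative from the equation, and an induction on $k$ in which the shift estimate's lower-order term $\|m\|_{H^k(\Omega)}$ is absorbed via the inductive hypothesis — this absorption step is stated cleanly and is valid, since $m\in H^{k+2}(\Omega)\subset H^{k+1}(\Omega)$ and $\Delta m\in H^k(\Omega)\subset H^{k-1}(\Omega)$, so the hypothesis at level $k-1$ genuinely applies. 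Two technical points deserve the care you flag: after flattening, the Laplacian becomes a variable-coefficient divergence-form operator and the homogeneous Neumann condition becomes a conormal (not literal normal) derivative condition, so the vanishing of the boundary terms must be checked for that transformed condition; and it helps to emphasize that since $m\in H^{k+2}(\Omega)$ is assumed a priori, you are proving only an a priori inequality rather than a regularity theorem, which is what legitimizes the difference-quotient manipulations throughout. With those caveats your argument is the expected one, and invoking \cite{KW} in the paper, as you suggest at the end, is exactly what the authors do.
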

\noindent As a consequence of Lemma \ref{EN}, we can define an equivalent norm on $H^{k+2}(\Omega)$ as follows 
$$\|m\|_{H^{k+2}(\Omega)}:=\|m\|_{L^2(\Omega)}+\|\Delta m\|_{H^k(\Omega)}.$$
While showing the existence of a regular solution, we have used the following estimates.
\begin{Pro}\label{P-GNI}
	Let $\Omega$ be a regular bounded subset of  $\mathbb{R}^2$. There exists a constant $C>0$ depending on $\Omega$ such that for all $m \in H^2(\Omega)$ with $\frac{\partial m}{\partial \eta}\big|_{\partial\Omega}=0,$ we have
	\begin{eqnarray} 
		\|\nabla m\|_{L^s(\Omega)} &\leq& C\  \|\Delta m\|_{L^2(\Omega)}, \ \ \forall \ s \in [1,\infty),\label{ES3}\\
		\|D^2m\|_{L^2(\Omega)} &\leq& C\ \|\Delta m\|_{L^2(\Omega)},\label{ES4}	\\
		\|\nabla m\|_{L^6(\Omega)} &\leq& C\ \|\nabla m\|^{\frac{1}{3}}_{L^2(\Omega)} \|\Delta m\|^{\frac{2}{3}}_{L^2(\Omega)},\label{ES6}\\
		\|\nabla m\|_{L^\infty(\Omega)} &\leq& C\ \|\nabla m\|^{\frac{1}{2}}_{L^2(\Omega)} \|\nabla \Delta m\|^{\frac{1}{2}}_{L^2(\Omega)}.\label{ES8}	
	\end{eqnarray}
\end{Pro}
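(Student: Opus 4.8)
The plan is to reduce all four estimates to a single normalization together with the Calderón–Zygmund-type bound already packaged in Lemma \ref{EN}, the two-dimensional Sobolev embedding, and Gagliardo–Nirenberg/Agmon interpolation. First I would observe that each of the quantities $\nabla m$, $D^2 m$, $\Delta m$, $\nabla\Delta m$ appearing in \eqref{ES3}--\eqref{ES8} is unchanged when a constant vector is added to $m$, so without loss of generality I may assume $\int_\Omega m\,dx = 0$ and invoke the Poincaré–Wirtinger inequality $\|m\|_{L^2(\Omega)} \le C\|\nabla m\|_{L^2(\Omega)}$. Moreover, the Neumann condition $\partial m/\partial\eta|_{\partial\Omega}=0$ forces $\int_\Omega \Delta m\,dx = \int_{\partial\Omega}\partial_\eta m\,d\sigma = 0$, so $\Delta m$ also has zero mean and hence $\|\Delta m\|_{L^2(\Omega)} \le C\|\nabla\Delta m\|_{L^2(\Omega)}$; this second remark is what will let me collapse lower-order terms when proving \eqref{ES8}.

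Next I would establish the basic $L^2$ gradient bound: integrating by parts and using the Neumann condition to kill the boundary term gives $\|\nabla m\|_{L^2(\Omega)}^2 = -\int_\Omega m\,\Delta m\,dx$, so by Cauchy–Schwarz and Poincaré, $\|\nabla m\|_{L^2(\Omega)} \le C\|\Delta m\|_{L^2(\Omega)}$. Combining this with Lemma \ref{EN} in the case $k=0$, namely $\|m\|_{H^2(\Omega)} \le C(\|m\|_{L^2(\Omega)}+\|\Delta m\|_{L^2(\Omega)})$, and applying Poincaré once more to discard the $\|m\|_{L^2(\Omega)}$ term, yields \eqref{ES4}, that is $\|D^2 m\|_{L^2(\Omega)} \le C\|\Delta m\|_{L^2(\Omega)}$.

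With \eqref{ES4} in hand the remaining estimates are interpolation. Estimate \eqref{ES3} follows by applying the embedding $H^1(\Omega)\hookrightarrow L^s(\Omega)$ (valid in two dimensions for every finite $s$, with constant depending on $s$) to the field $\nabla m$ and bounding $\|\nabla m\|_{H^1(\Omega)}=\|\nabla m\|_{L^2(\Omega)}+\|D^2 m\|_{L^2(\Omega)}$ by $\|\Delta m\|_{L^2(\Omega)}$ via the previous step. For \eqref{ES6} I would use the Gagliardo–Nirenberg inequality $\|\nabla m\|_{L^6(\Omega)} \le C\|D^2 m\|_{L^2(\Omega)}^{2/3}\|\nabla m\|_{L^2(\Omega)}^{1/3} + C\|\nabla m\|_{L^2(\Omega)}$, absorbing the additive term by writing $\|\nabla m\|_{L^2(\Omega)} = \|\nabla m\|_{L^2(\Omega)}^{1/3}\|\nabla m\|_{L^2(\Omega)}^{2/3} \le C\|\nabla m\|_{L^2(\Omega)}^{1/3}\|\Delta m\|_{L^2(\Omega)}^{2/3}$ and converting $\|D^2 m\|_{L^2(\Omega)}$ to $\|\Delta m\|_{L^2(\Omega)}$ through \eqref{ES4}. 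Finally \eqref{ES8} comes from Agmon's inequality in two dimensions, $\|\nabla m\|_{L^\infty(\Omega)} \le C\|\nabla m\|_{L^2(\Omega)}^{1/2}\|\nabla m\|_{H^2(\Omega)}^{1/2}$, where I would control $\|\nabla m\|_{H^2(\Omega)}$ by $\|\nabla\Delta m\|_{L^2(\Omega)}$ using Lemma \ref{EN} with $k=1$, so that $\|m\|_{H^3(\Omega)}\le C(\|m\|_{L^2(\Omega)}+\|\Delta m\|_{L^2(\Omega)}+\|\nabla\Delta m\|_{L^2(\Omega)})$, together with the zero-mean Poincaré chain $\|\nabla m\|_{L^2(\Omega)}\le C\|\Delta m\|_{L^2(\Omega)}\le C\|\nabla\Delta m\|_{L^2(\Omega)}$.

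The main obstacle is not any individual inequality but the bookkeeping of the additive lower-order terms that Gagliardo–Nirenberg and elliptic regularity carry on a bounded domain: the clean, scaling-homogeneous right-hand sides of \eqref{ES6} and \eqref{ES8} are recovered only after systematically re-expressing those terms through the basic bound $\|\nabla m\|_{L^2(\Omega)}\le C\|\Delta m\|_{L^2(\Omega)}$ and the Poincaré–Wirtinger estimate for the zero-mean function $\Delta m$. I would also need to keep the constants dependent only on $\Omega$ (and on $s$ in \eqref{ES3}) and to read \eqref{ES8} for $m\in H^3(\Omega)$, since $\nabla\Delta m\in L^2(\Omega)$ is implicitly required for its right-hand side to be finite.
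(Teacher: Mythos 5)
Your proposal is correct. Note, however, that this paper does not actually prove Proposition \ref{P-GNI}: it defers the proof to the authors' earlier work \cite{SPSK}, so there is no internal argument to compare against line by line. Your self-contained derivation follows the route one would expect that reference to take: reduce to the mean-zero case (all four quantities and the Neumann condition are invariant under adding constants), get the basic bound $\|\nabla m\|_{L^2(\Omega)}\le C\|\Delta m\|_{L^2(\Omega)}$ from integration by parts plus Poincar\'e--Wirtinger, upgrade to full $H^2$ control via Lemma \ref{EN}, and then obtain \eqref{ES3}, \eqref{ES6}, \eqref{ES8} from the two-dimensional Sobolev embedding, Gagliardo--Nirenberg, and Agmon inequalities applied to $\nabla m$, absorbing the additive lower-order terms through the chain $\|m\|_{L^2}\le C\|\nabla m\|_{L^2}\le C\|\Delta m\|_{L^2}\le C\|\nabla\Delta m\|_{L^2}$ (the last step using that $\Delta m$ has zero mean by the divergence theorem and the Neumann condition). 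Your two side remarks are also well taken and worth recording: the constant in \eqref{ES3} necessarily depends on $s$ (the $H^1\hookrightarrow L^s$ embedding constant blows up as $s\to\infty$), and \eqref{ES8} must be read for $m\in H^3(\Omega)$, since otherwise its right-hand side is not defined; the proposition's statement is slightly imprecise on both points.
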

\noindent The proof of Proposition \ref{P-GNI} can be found in \cite{SPSK}. 

 In order to calculate the $L^2(0,T;H^1(\Omega))$ norm of different cross product and non-linear terms, we will use the following inequalities throughout the paper frequently. The constant $C>0$ may differ from one estimate to another estimate in Lemma \ref{PROP2} and \ref{L-CP}.
\begin{Lem}\label{PROP2}
	Let $\Omega$ be a regular bounded domain of $\mathbb{R}^2$. Then there exists a constant $C>0$ depending on $\Omega$ and $T$ such that 
	\begin{enumerate}[(\roman*)]
		\item  for $\xi\in L^\infty(0,T;H^2(\Omega))$ and $\zeta\in L^2(0,T;H^3(\Omega))$,
		\begin{equation}\label{EE-2}
		\|\xi \times \Delta \zeta\|^2_{L^2(0,T;H^1(\Omega))}\leq  C\ \|\xi\|^2_{L^\infty(0,T;H^2(\Omega))}\  \|\zeta\|^2_{L^2(0,T;H^3(\Omega))},
		\end{equation}
		\item for $\xi\in L^\infty(0,T;H^2(\Omega))$ and $\zeta \in L^2(0,T;H^1(\Omega))$,
		\begin{equation}\label{EE-3}
				\|\xi \times \zeta\|^2_{L^2(0,T;H^1(\Omega))} \leq  C\ \|\xi\|^2_{L^\infty(0,T;H^2(\Omega))}\  \|\zeta\|^2_{L^2(0,T;H^1(\Omega))},
		\end{equation}
		\item  for $\xi\in L^2(0,T;H^3(\Omega))$, $\zeta\in L^\infty(0,T;H^2(\Omega))$ and $\omega\in L^\infty(0,T;H^2(\Omega))$,
		\begin{equation}\label{EE-4}
				\|(\nabla \xi\cdot \nabla \zeta)\ \omega\|^2_{L^2(0,T;H^1(\Omega))}\leq C \  \|\xi\|^2_{L^2(0,T;H^3(\Omega))} \ \|\zeta\|^2_{L^\infty(0,T;H^2(\Omega))}\  \|\omega\|^2_{L^\infty(0,T;H^2(\Omega))},
		\end{equation} 
		\item  for $\xi\in L^\infty(0,T;H^2(\Omega))$, $\zeta\in L^\infty(0,T;H^2(\Omega))$ and $\omega \in L^2(0,T;H^1(\Omega))$,
		\begin{equation}\label{EE-5}
				\|\xi \times (\zeta \times \omega)\|^2_{L^2(0,T;H^1(\Omega))} \leq C \ \|\xi\|^2_{L^\infty(0,T;H^2(\Omega))} \ \|\zeta\|^2_{L^\infty(0,T;H^2(\Omega))}\  \|\omega\|^2_{L^2(0,T;H^1(\Omega))} .
		\end{equation}
	\end{enumerate}
\end{Lem}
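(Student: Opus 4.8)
The plan is to prove each of the four estimates in Lemma \ref{PROP2} by the same mechanical strategy: reduce the $H^1$ norm in space-time to the two pieces $\|\cdot\|_{L^2(\Omega)}$ and $\|\nabla(\cdot)\|_{L^2(\Omega)}$ pointwise in time, distribute derivatives by the product/Leibniz rule together with the cross-product bilinearity, and then control the resulting products of factors by H\"older's inequality (in the form of the cross-product estimate \eqref{ES0} from Lemma \ref{CPP}) followed by the Sobolev embeddings and the interpolation inequalities of Proposition \ref{P-GNI}. Since $\Omega\subset\mathbb R^2$, the workhorse embeddings are $H^2(\Omega)\hookrightarrow L^\infty(\Omega)$, $H^2(\Omega)\hookrightarrow W^{1,p}(\Omega)$ for every finite $p$, and $H^1(\Omega)\hookrightarrow L^q(\Omega)$ for every finite $q$; these, together with the equivalent norm $\|m\|_{H^{k+2}}\simeq\|m\|_{L^2}+\|\Delta m\|_{H^k}$ from Lemma \ref{EN}, let me bound the $L^\infty(0,T;H^2)$ factors in an $L^\infty$-in-space norm and spend the remaining integrability on the $L^2(0,T;H^3)$ or $L^2(0,T;H^1)$ factor. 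Finally, I would take $\esssup_{t}$ of the $L^\infty$-type factors out of the time integral and integrate the square of the remaining $L^2$-in-time factor, which yields the stated product structure on the right-hand side.

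Concretely, for \eqref{EE-3} I would write $\|\xi\times\zeta\|^2_{H^1}\le \|\xi\times\zeta\|^2_{L^2}+\|\nabla(\xi\times\zeta)\|^2_{L^2}$ and expand $\nabla(\xi\times\zeta)=(\nabla\xi)\times\zeta+\xi\times(\nabla\zeta)$. Each term is handled by \eqref{ES0}: I bound $\|\xi\|_{L^\infty}$ and $\|\nabla\xi\|_{L^4}$ (or $L^\infty$) using $H^2\hookrightarrow L^\infty$ and $H^2\hookrightarrow W^{1,4}$, pull these out as $\|\xi\|_{L^\infty(0,T;H^2)}$, and leave $\|\zeta\|_{L^2}+\|\nabla\zeta\|_{L^2}\simeq\|\zeta\|_{H^1}$ to be integrated in time, giving \eqref{EE-3}. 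For \eqref{EE-2}, the critical term is $\nabla(\xi\times\Delta\zeta)=(\nabla\xi)\times\Delta\zeta+\xi\times(\nabla\Delta\zeta)$; here $\|\nabla\Delta\zeta\|_{L^2}$ is exactly the top-order part of $\|\zeta\|_{H^3}$ via Lemma \ref{EN}, while $\|\Delta\zeta\|_{L^2}$ is lower order and also controlled by $\|\zeta\|_{H^3}$, and the $\xi$-factors again reduce to $\|\xi\|_{L^\infty(0,T;H^2)}$. Estimate \eqref{EE-5} follows the same pattern after applying \eqref{ES0} twice (first to the inner product $\zeta\times\omega$, then to $\xi\times(\cdot)$) and using the Leibniz rule once for the gradient.

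The genuinely delicate estimate is \eqref{EE-4}, because the scalar coefficient $\nabla\xi\cdot\nabla\zeta$ carries a top-order $H^3$ derivative on $\xi$ yet the right-hand side only asks for $\|\xi\|_{H^3}$ at the square level, so I must be careful where each derivative lands. Differentiating $(\nabla\xi\cdot\nabla\zeta)\,\omega$ produces $(\nabla^2\xi\cdot\nabla\zeta)\omega$, $(\nabla\xi\cdot\nabla^2\zeta)\omega$, and $(\nabla\xi\cdot\nabla\zeta)\nabla\omega$; the first term is the dangerous one. For it I would put $\nabla^2\xi$ in $L^2$ (absorbing it into $\|\xi\|_{H^3}\ge\|\xi\|_{H^2}$, using \eqref{ES4}), $\nabla\zeta$ in $L^\infty$ via $H^2\hookrightarrow W^{1,\infty}$ is \emph{not} available in 2D, so instead I would place $\nabla^2\xi\in L^2$, $\nabla\zeta\in L^4$ and $\omega\in L^4$ by $H^2\hookrightarrow W^{1,4}\cap L^4$, keeping the product in $L^2(\Omega)$; for the pieces where $\xi$ must supply an $L^\infty$ or $W^{1,4}$ factor I would instead borrow integrability from the $L^2(0,T;H^3)$ slot only at the $L^2$-in-time level so that the final exponents match the stated right-hand side. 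The main obstacle throughout is bookkeeping the H\"older exponents in 2D so that every factor lands in a space controlled by the prescribed norm (and in particular never demanding $W^{1,\infty}$ from an $H^2$ function); once the exponent split $(\infty,4,4)$ or $(2,\infty)$-via-$H^3$ is fixed term by term, each bound is a routine application of \eqref{ES0}, Proposition \ref{P-GNI}, and the 2D Sobolev embeddings.
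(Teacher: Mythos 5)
Your plan for (i), (ii) and (iv) follows the paper's proof essentially verbatim: expand the $H^1(\Omega)$ norm via the Leibniz rule, apply H\"older in the form \eqref{ES0}, and use the 2D embeddings $H^2(\Omega)\hookrightarrow L^\infty(\Omega)$ and $H^1(\Omega)\hookrightarrow L^p(\Omega)$, $p<\infty$, keeping the $L^\infty$-in-time factors outside the time integral and integrating the remaining $L^2$-in-time factor. Those parts are sound.

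However, your treatment of the ``dangerous'' term in (iii) contains a genuine error. You propose to bound $(\nabla^2\xi\cdot\nabla\zeta)\,\omega$ in $L^2(\Omega)$ by placing $\nabla^2\xi\in L^2$, $\nabla\zeta\in L^4$, $\omega\in L^4$, ``keeping the product in $L^2(\Omega)$''. This H\"older split is impossible: the reciprocal exponents satisfy $\frac12+\frac14+\frac14=1$, so the product only lands in $L^1(\Omega)$, not $L^2(\Omega)$. Worse, with $\nabla^2\xi$ confined to $L^2$ the exponents can never be repaired, since then both remaining factors would have to be taken in $L^\infty$, and $\|\nabla\zeta\|_{L^\infty}$ is not controlled by $\|\zeta\|_{H^2}$ in 2D --- precisely the obstruction you yourself identify. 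The missing idea is that $\xi$ sits in $H^3(\Omega)$, so you may (and must) spend that extra derivative spatially: $D^2\xi\in H^1(\Omega)\hookrightarrow L^4(\Omega)$ and $\nabla\xi\in H^2(\Omega)\hookrightarrow L^\infty(\Omega)$. The paper's proof does exactly this: the term $(D^2\xi\cdot\nabla\zeta)\,\omega$ is estimated with exponents $(4,8,8)$ as $\|D^2\xi\|_{L^4}\|\nabla\zeta\|_{L^8}\|\omega\|_{L^8}$ (note $\frac14+\frac18+\frac18=\frac12$), and the term $(\nabla\xi\cdot D^2\zeta)\,\omega$ --- which you do not address, and which cannot avoid an $L^\infty$ bound on $\nabla\xi$ because $D^2\zeta$ is available only in $L^2$ --- is estimated as $\|\nabla\xi\|_{L^\infty}\|D^2\zeta\|_{L^2}\|\omega\|_{L^\infty}$. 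In both cases the $\xi$-factor is then integrated $L^2$ in time against the $L^\infty$-in-time bounds on $\zeta$ and $\omega$, which yields \eqref{EE-4}. Your closing remark about ``borrowing integrability from the $L^2(0,T;H^3)$ slot only at the $L^2$-in-time level'' gestures at this but does not repair the spatial exponents, which is where the actual work lies.
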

\begin{proof}
	 For the first and second estimates, applying H\"older's inequality with embeddings $H^1(\Omega)\hookrightarrow L^4(\Omega)$ and $H^2(\Omega)\hookrightarrow L^{\infty}(\Omega)$, we find
\begin{flalign*}
 	\|\xi \times \Delta \zeta\|&^2_{L^2(0,T;H^1(\Omega))} 
	\leq  \int_0^T \|\xi\|^2_{L^\infty(\Omega)} \|\Delta \zeta\|^2_{L^2(\Omega)} \ dt+ 2\int_0^T \|\nabla \xi\|^2_{L^4(\Omega)}\|\Delta \zeta\|^2_{L^4(\Omega)} dt&\\
	&\hspace{2cm}+ 2\int_0^T \|\xi\|^2_{L^\infty(\Omega)} \|\nabla \Delta \zeta\|^2_{L^2(\Omega)}\ dt
	\leq  C\ \|\xi\|^2_{L^\infty(0,T;H^2(\Omega))}\  \|\zeta\|^2_{L^2(0,T;H^3(\Omega))}.
\end{flalign*}
This proves the inequality (i). The proof for (ii) can be constructed using a comparable line of reasoning as the proof for (i). Finally, for the last two estimates, implementing H\"older's inequality followed by the embeddings $H^1(\Omega)\hookrightarrow L^p(\Omega)$ for $p\in [1,\infty)$ and $H^2(\Omega)\hookrightarrow L^{\infty}(\Omega)$, we derive
\begin{flalign*}
	\|(\nabla \xi\cdot \nabla \zeta)\ \omega\|&^2_{L^2(0,T;H^1(\Omega))} 
	\leq \int_0^T  \|\nabla \xi\|^2_{L^4(\Omega)}  \|\nabla \zeta\|^2_{L^8(\Omega)} \|\omega\|^2_{L^8(\Omega)} dt + 3\int_0^T \|D^2 \xi\|^2_{L^4(\Omega)}  \|\nabla \zeta \|^2_{L^8(\Omega)} \|\omega\|^2_{L^8(\Omega)} dt\\
	&\ \ \ +3\int_0^T \|\nabla \xi\|^2_{L^\infty(\Omega)}  \|D^2 \zeta\|^2_{L^2(\Omega)} \|\omega\|^2_{L^\infty(\Omega)} dt +3 \int_0^T   \|\nabla \xi\|^2_{L^4(\Omega)}  \|\nabla \zeta\|^2_{L^8(\Omega)} \|\nabla \omega\|^2_{L^8(\Omega)} dt\\
	&\leq C \  \|\xi\|^2_{L^2(0,T;H^3(\Omega))} \ \|\zeta\|^2_{L^\infty(0,T;H^2(\Omega))} \ \|\omega\|^2_{L^\infty(0,T;H^2(\Omega))},
\end{flalign*}
\begin{flalign*}
	\text{and}\ \|\xi \times (\zeta \times \omega)\|&^2_{L^2(0,T;H^1(\Omega))} 
	\leq \int_0^T \|\xi\|^2_{L^4(\Omega)} \|\zeta\|^2_{L^8(\Omega)} \|\omega\|^2_{L^8(\Omega)} dt +3 \int_0^T \|\nabla \xi\|^2_{L^4(\Omega)} \|\zeta\|^2_{L^8(\Omega)} \|\omega\|^2_{L^8(\Omega)} dt &\\
	&\ \ \ +3\int_0^T \|\xi\|^2_{L^4(\Omega)} \|\nabla\zeta\|^2_{L^8(\Omega)} \|\omega\|^2_{L^8(\Omega)} dt + 3\int_0^T \|\xi\|^2_{L^\infty(\Omega)} \|\zeta\|^2_{L^\infty (\Omega)} \|\nabla \omega\|^2_{L^2(\Omega)} dt\\
	&\leq C \ \|\xi\|^2_{L^\infty(0,T;H^2(\Omega))} \  \|\zeta\|^2_{L^\infty(0,T;H^2(\Omega))}\  \|\omega\|^2_{L^2(0,T;H^1(\Omega))} .
\end{flalign*}
Hence the proof of (iii) and (iv).
\end{proof}
Moreover, to estimate the $L^2(0,T;H^1(\Omega)^*)$ norm of various terms, we require the following lemma.
	\begin{Lem}\label{L-CP}	
		Let $\Omega$ be a bounded subset of $\mathbb{R}^2$ with smooth boundary. Then there exists a constant $C>0$ such that 
	\begin{enumerate}[(\roman*)]
		\item for $\xi \in L^\infty(0,T;H^2(\Omega))$, $\zeta\in L^\infty(0,T;H^2(\Omega))$ and $\omega\in L^2(0,T;L^2(\Omega))$, 
		\begin{equation}\label{AEE-1}
			\|\big( \nabla \xi \cdot \nabla \zeta \big) \omega\|_{L^2(0,T;H^1(\Omega)^*)} \leq  C\ \|\xi\|_{L^\infty(0,T;H^2(\Omega))} \|\zeta\|_{L^\infty(0,T;H^2(\Omega))} \|\omega\|_{L^2(0,T;L^2(\Omega))},
		\end{equation}
		\item  for $\xi \in L^\infty(0,T;H^2(\Omega))$, $\zeta\in L^\infty(0,T;L^2(\Omega))$ and $\omega \in L^2(0,T;H^3(\Omega))$ with $\frac{\partial \omega}{\partial \eta}=0$,
		\begin{equation}\label{AEE-2}
			\|\nabla \cdot \big((\xi\cdot \zeta)\nabla \omega\big)\|_{L^2(0,T;H^1(\Omega)^*)}\leq C\ \|\xi\|_{L^\infty(0,T;H^2(\Omega))} \|\zeta\|_{L^\infty(0,T;L^2(\Omega))} \|\omega\|_{L^2(0,T;H^3(\Omega))},
		\end{equation}
		\item for $\xi \in L^2(0,T;H^1(\Omega))$ and $\zeta \in L^\infty(0,T;H^2(\Omega))$ with $\frac{\partial \xi}{\partial \eta}=\frac{\partial \zeta}{\partial \eta}=0$,
		\begin{equation}\label{AEE-3}
			\|\Delta (\xi\times \zeta)\|_{L^2(0,T;H^1(\Omega)^*)} \leq  C\ \|\xi\|_{L^2(0,T;H^1(\Omega))} \|\zeta\|_{L^\infty(0,T;H^2(\Omega))},
		\end{equation}
		\item  for $\xi \in L^\infty(0,T;H^2(\Omega))$ and $\zeta\in L^2(0,T;H^1(\Omega))$,
		\begin{equation}\label{AEE-4}
			\|\Delta \xi\times \zeta \|_{L^2(0,T;H^1(\Omega)^*)}\leq C\ \|\xi\|_{L^\infty(0,T;H^2(\Omega))} \|\zeta\|_{L^2(0,T;H^1(\Omega))},
		\end{equation} 
	    \item for $\xi\in L^\infty(0,T;L^2(\Omega))$ and $\zeta\in L^2(0,T;H^1(\Omega))$,
	    \begin{equation}\label{AEE-8}
	    	\|\xi\times \zeta\|_{L^2(0,T;H^1(\Omega)^*)}\leq C\ \|\xi\|_{L^\infty(0,T;L^2(\Omega))}\ \|\zeta\|_{L^2(0,T;H^1(\Omega))},
	    \end{equation}
		\item  for $\xi\in L^\infty(0,T;L^2(\Omega))$, $\zeta \in L^\infty(0,T;H^2(\Omega))$ and $\omega \in L^2(0,T;H^1(\Omega))$,
		\begin{equation}\label{AEE-5}
			\|\big(\xi \times \zeta\big) \times \omega\|_{L^2(0,T;H^1(\Omega)^*)} \leq C\ \|\xi\|_{L^\infty(0,T;L^2(\Omega))} \|\zeta\|_{L^\infty(0,T;H^2(\Omega))} \|\omega\|_{L^2(0,T;H^1(\Omega))},
		\end{equation}
		\item  for $\xi\in L^\infty(0,T;L^2(\Omega))$, $\zeta \in L^\infty(0,T;H^2(\Omega))$ and $\omega \in L^2(0,T;H^1(\Omega))$,
		\begin{equation}\label{AEE-6}
			\| \xi \times\big(\zeta \times \omega\big)\|_{L^2(0,T;H^1(\Omega)^*)} \leq  C\ \|\xi\|_{L^\infty(0,T;L^2(\Omega))} \|\zeta\|_{L^\infty(0,T;H^2(\Omega))} \|\omega\|_{L^2(0,T;H^1(\Omega))},
		\end{equation}
		\item  for $\xi \in L^6(0,T;L^6(\Omega))$, $\zeta \in L^6(0,T;L^6(\Omega))$ and $\omega \in L^6(0,T;L^6(\Omega))$ with $\frac{\partial \omega}{\partial \eta}=0$,
		\begin{equation}\label{AEE-7}
			\|\nabla \cdot \big((\nabla \xi\cdot \nabla \zeta)\nabla \omega\big)\|_{L^2(0,T;H^1(\Omega)^*)} \leq C\ \|\nabla \xi\|_{L^6(0,T;L^6(\Omega))} \|\nabla \zeta\|_{L^6(0,T;L^6(\Omega))} \|\nabla \omega\|_{L^6(0,T;L^6(\Omega))},
		\end{equation}	
	\end{enumerate}
\end{Lem}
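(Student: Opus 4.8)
The plan is to work through the duality characterisation of the dual norm: for almost every $t$ and any functional $F(t)\in H^1(\Omega)^*$,
$$\|F(t)\|_{H^1(\Omega)^*}=\sup_{\|\vartheta\|_{H^1(\Omega)}\le 1}\langle F(t),\vartheta\rangle,$$
so in each of the eight items it suffices to bound $\big|\int_\Omega F(t)\cdot\vartheta\,dx\big|$ by (a product of spatial norms of the data)$\times\|\vartheta\|_{H^1(\Omega)}$, and then to integrate the resulting pointwise-in-time bound against the stated time integrability. The decisive structural feature is that at most one derivative is ever allowed to fall on the test function $\vartheta$, which is precisely what membership $\vartheta\in H^1(\Omega)$ affords; the two-dimensional embeddings $H^1(\Omega)\hookrightarrow L^p(\Omega)$ for every $p\in[1,\infty)$, $H^2(\Omega)\hookrightarrow L^\infty(\Omega)$, and (through \eqref{ES3}) $H^2(\Omega)\hookrightarrow W^{1,p}(\Omega)$ for every $p\in[1,\infty)$ then do all the work. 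I would split the estimates into those carrying an outer derivative, namely \eqref{AEE-2}, \eqref{AEE-3} and \eqref{AEE-7}, and the purely algebraic ones \eqref{AEE-1}, \eqref{AEE-4}, \eqref{AEE-8}, \eqref{AEE-5}, \eqref{AEE-6}.

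For the three differential terms the natural interpretation is distributional: the operand is an $L^2$-in-space field and the outer $\nabla\cdot$ (resp.\ $\Delta$) is realised as an element of $H^1(\Omega)^*$ via the integration-by-parts pairing $\langle\nabla\cdot G,\vartheta\rangle=-\int_\Omega G\cdot\nabla\vartheta\,dx$. For \eqref{AEE-2} and \eqref{AEE-7} this gives $-\int_\Omega(\cdots)\nabla\omega\cdot\nabla\vartheta\,dx$, the boundary flux carrying the factor $\nabla\omega\cdot\eta=\partial\omega/\partial\eta=0$; for \eqref{AEE-3}, writing the pairing as $-\int_\Omega\nabla(\xi\times\zeta)\cdot\nabla\vartheta\,dx$ realises $\Delta$ as the weak Neumann Laplacian, the vanishing boundary flux being exactly $\partial(\xi\times\zeta)/\partial\eta=(\partial\xi/\partial\eta)\times\zeta+\xi\times(\partial\zeta/\partial\eta)=0$ under the stated hypotheses. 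In each case one factor $\|\nabla\vartheta\|_{L^2(\Omega)}\le\|\vartheta\|_{H^1(\Omega)}$ is peeled off and the rest is handled by H\"older: in \eqref{AEE-7} the split $\tfrac16+\tfrac16+\tfrac16+\tfrac12=1$ leaves $\|\nabla\xi\|_{L^6}\|\nabla\zeta\|_{L^6}\|\nabla\omega\|_{L^6}$; in \eqref{AEE-2} the factors $H^2\hookrightarrow L^\infty$ and $\nabla\omega\in H^2\hookrightarrow L^\infty$ leave $\|\xi\|_{H^2}\|\zeta\|_{L^2}\|\omega\|_{H^3}$; and in \eqref{AEE-3} the Leibniz expansion of $\nabla(\xi\times\zeta)$ together with $H^2\hookrightarrow L^\infty$ and $\|\cdot\|_{L^4}\le C\|\cdot\|_{H^1}$ leaves $\|\xi\|_{H^1}\|\zeta\|_{H^2}$.

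For the algebraic terms no integration by parts is needed: I would pair with $\vartheta$ and distribute the Lebesgue exponents so that $\vartheta$ enters only through $\|\vartheta\|_{L^4(\Omega)}\le C\|\vartheta\|_{H^1(\Omega)}$, crucially avoiding $\|\vartheta\|_{L^\infty}$, which is not controlled by the $H^1$-norm in two dimensions. Thus \eqref{AEE-4} and \eqref{AEE-8} come from the bounds $\|\Delta\xi\|_{L^2}\|\zeta\|_{L^4}\|\vartheta\|_{L^4}$ and $\|\xi\|_{L^2}\|\zeta\|_{L^4}\|\vartheta\|_{L^4}$; \eqref{AEE-1} from $\|\nabla\xi\|_{L^8}\|\nabla\zeta\|_{L^8}\|\omega\|_{L^2}\|\vartheta\|_{L^4}$ using \eqref{ES3}; and \eqref{AEE-5}, \eqref{AEE-6} after first collapsing the inner cross product by absorbing its $H^2$-factor in $L^\infty$ (so that, e.g., $\|\xi\times\zeta\|_{L^2}\le\|\xi\|_{L^2}\|\zeta\|_{L^\infty}$) and then applying H\"older as above. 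The closing step in every item is the time integration of the pointwise bound: for the $L^\infty$-$L^\infty$-$L^2$ patterns this is immediate, while for \eqref{AEE-7} it requires a triple H\"older in time with exponents $(3,3,3)$ to convert $\int_0^T\|\nabla\xi\|_{L^6}^2\|\nabla\zeta\|_{L^6}^2\|\nabla\omega\|_{L^6}^2\,dt$ into the product of the three $L^6(0,T;L^6(\Omega))$-norms.

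The only genuine subtlety, and hence the step I would write out most carefully, is the bookkeeping of H\"older exponents and of boundary terms: one must choose the splitting so that exactly one derivative, and no $L^\infty$-norm, lands on $\vartheta$, and one must verify in \eqref{AEE-2}, \eqref{AEE-3} and \eqref{AEE-7} that the prescribed Neumann conditions are precisely what cause the boundary integrals to vanish (equivalently, what make the distributional pairing agree with the formal integration by parts). Everything beyond this is a direct application of H\"older's inequality and the two-dimensional Sobolev embeddings already recorded in Proposition \ref{P-GNI} and exploited in Lemma \ref{PROP2}.
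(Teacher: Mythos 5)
Your proposal is correct and follows essentially the same route as the paper's proof: pairing with a test function $\vartheta\in H^1(\Omega)$, realising the divergence/Laplacian terms in \eqref{AEE-2}, \eqref{AEE-3}, \eqref{AEE-7} through integration by parts (with the Neumann conditions killing the boundary terms), and then applying H\"older with exactly the exponent splittings the paper uses (e.g.\ $L^8\times L^8\times L^2\times L^4$ for \eqref{AEE-1}, $L^6\times L^6\times L^6\times L^2$ for \eqref{AEE-7}), so that only one derivative and never an $L^\infty$-norm lands on $\vartheta$. The final time integrations, including the $(3,3,3)$ H\"older in time for \eqref{AEE-7}, also match what the paper does implicitly.
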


\begin{proof}
	Let $\upsilon$ be an arbitrary element in $H^1(\Omega)$. For the estimates (i)-(v), we will apply H\"older's inequality followed by embeddings $H^1(\Omega) \hookrightarrow L^4(\Omega)$, $H^1(\Omega)\hookrightarrow L^8(\Omega)$ and $H^2(\Omega)\hookrightarrow L^{\infty}(\Omega)$ to obtain the following estimates:
			\begin{flalign*}
			(i)\ \ \  &\langle ( \nabla \xi\cdot \nabla \zeta )\ \omega ,\upsilon \rangle  =  \int_{\Omega} ( \nabla \xi\cdot \nabla \zeta )\omega \cdot \upsilon\ dx
			\leq  \|\nabla \xi\|_{L^8(\Omega)} \|\nabla \zeta\|_{L^8(\Omega)} \|\omega\|_{L^2(\Omega)} \|\upsilon\|_{L^4(\Omega)} \hspace{1.5in} \\  
			&\hspace{2.6in} \leq C\  \| \xi\|_{H^2(\Omega)} \|\zeta\|_{H^2(\Omega)} \|\omega\|_{L^2(\Omega)} \|\upsilon\|_{H^1(\Omega)},  
			\end{flalign*}
			which leads to the estimate
			$$\int_0^T \|\big(\nabla \xi\cdot \nabla \zeta\big)\ w\|^2_{H^1(\Omega)^*} \ dt \leq C\ \|\xi\|^2_{L^\infty(0,T;H^2(\Omega))} \ \|\zeta\|^2_{L^\infty(0,T;H^2(\Omega))} \ \|\omega\|^2_{L^2(0,T;L^2(\Omega))}.$$
\begin{flalign*}
	(ii) \ \ &\langle \nabla \cdot \big((\xi\cdot \zeta)\nabla \omega\big) ,\upsilon \rangle  = -  \int_{\Omega} \big((\xi\cdot \zeta)\nabla \omega\big)\cdot \nabla \upsilon\ dx \leq  \| \xi\|_{L^\infty(\Omega)} \|\zeta\|_{L^2(\Omega)} \|\nabla \omega\|_{L^\infty(\Omega)} \|\nabla \upsilon\|_{L^2(\Omega)},\hspace{1.5in}\\
	 &\int_0^T \|\nabla \cdot \big((\xi\cdot \zeta)\nabla \omega\big)\|^2_{H^1(\Omega)^*}\ dt \leq C\ \|\xi\|^2_{L^\infty(0,T;H^2(\Omega))}  \ \|\zeta\|^2_{L^\infty(0,T;L^2(\Omega))} \  \|\omega\|^2_{L^2(0,T;H^3(\Omega))}.
	 \end{flalign*} 
	\begin{flalign*}
		(iii)\ \ \ &\langle \Delta (\xi\times \zeta), \upsilon \rangle  = - \int_{\Omega} \nabla (\xi\times \zeta)\cdot \nabla \upsilon \ dx \leq  \|\nabla \xi\|_{L^2(\Omega)} \|\zeta\|_{L^\infty(\Omega)} \|\nabla \upsilon\|_{L^2(\Omega)}  +  \|\xi\|_{L^4(\Omega)} \|\nabla \zeta\|_{L^4(\Omega)} \|\nabla \upsilon\|_{L^2(\Omega)}&\\
		&\hspace{2.5in}\leq C\  \|\xi\|_{H^1(\Omega)} \ \|\zeta \|_{H^2(\Omega)}\ \|\upsilon\|_{H^1(\Omega)},\\ 
		&\int_0^T \|\Delta (\xi \times \zeta)\|^2_{H^1(\Omega)^*} \ dt\leq C\ \|\xi\|^2_{L^2(0,T;H^1(\Omega))} \ \|\zeta\|^2_{L^\infty(0,T;H^2(\Omega))}.
	\end{flalign*}
	\begin{flalign*}
	(iv)\ \ \ \	&\langle \Delta \xi\times \zeta, \upsilon \rangle\  =  \int_{\Omega} \big(\Delta \xi\times \zeta\big) \cdot \upsilon\ dx\leq  \|\Delta \xi\|_{L^2(\Omega)} \|\zeta\|_{L^4(\Omega)} \|\upsilon\|_{L^4(\Omega)} \leq C\  \| \xi\|_{H^2(\Omega)} \|\zeta\|_{H^1(\Omega)} \|\upsilon\|_{H^1(\Omega)},&\\
	& \int_0^T \|\Delta \xi \times \zeta\|^2_{H^1(\Omega)^*}\ dt\leq C\ \|\xi\|^2_{L^\infty(0,T;H^2(\Omega))}\ \|\zeta\|^2_{L^2(0,T;H^1(\Omega))}.
	\end{flalign*}

The proof of (v) can be established using a similar reasoning as applied in the proof of (iv). Now, for the last three estimates (vi)-(vii), again appealing to H\"older's inequality and implementing the embeddings $H^1(\Omega)\hookrightarrow L^p(\Omega)$ for $p\in [1,\infty)$ and $H^2(\Omega) \hookrightarrow L^{\infty}(\Omega)$, followed by time integration as before, we derive
	\begin{flalign*}
		(vi)\ \ \ &\langle (\xi\times \zeta)\times \omega, \upsilon\rangle   =  \int_{\Omega} \big((\xi\times \zeta)\times \omega\big)\cdot \upsilon\ dx \leq  \|\xi\|_{L^2(\Omega)} \|\zeta\|_{L^\infty(\Omega)} \|\omega\|_{L^4(\Omega)} \|\upsilon\|_{L^4(\Omega)} &\\
		&\hspace{2.7in}\leq C\  \|\xi\|_{L^2(\Omega)} \|\zeta\|_{H^2(\Omega)} \|\omega\|_{H^1(\Omega)} \|\upsilon\|_{H^1(\Omega)},\\
		& \int_0^T \|(\xi \times \zeta)\times \omega\|^2_{H^1(\Omega)^*}\ dt \leq C\ \|\xi\|^2_{L^\infty(0,T;L^2(\Omega))} \  \|\zeta\|^2_{L^\infty(0,T;H^2(\Omega))} \  \|\omega\|^2_{L^2(0,T;H^1(\Omega))}.
	\end{flalign*}
	\begin{flalign*}
		(viii)\ \ \ &\langle \nabla \cdot \big((\nabla \xi\cdot \nabla \zeta)\nabla \omega\big), \upsilon \rangle = - \int_{\Omega} \big(\nabla \xi\cdot \nabla \zeta\big)\nabla \omega\cdot \nabla \upsilon \ dx\leq  \|\nabla \xi\|_{L^6(\Omega)} \|\nabla \zeta\|_{L^6(\Omega)} \|\nabla \omega\|_{L^6(\Omega)} \|\nabla \upsilon\|_{L^2(\Omega)}&\\
		&\hspace{3.4in}\leq C\ \|\xi\|_{H^2(\Omega)} \ \|\zeta\|_{H^2(\Omega)}\ \|\omega\|_{H^2(\Omega)} \ \|\upsilon\|_{H^1(\Omega)},\\
		&\int_0^T \|\nabla \cdot \big((\nabla \xi\cdot \nabla \zeta)\nabla \omega\big)\|^2_{H^1(\Omega)^*}\ dt \leq C\ \|\nabla \xi\|^2_{L^6(0,T;L^6(\Omega))} \ \|\nabla \zeta\|^2_{L^6(0,T;L^6(\Omega))}\  \|\nabla \omega\|^2_{L^6(0,T;L^6(\Omega))}.
	\end{flalign*}
The proof for (vii) can be derived using an argument analogous to the one used for (vi). Thus, we have proved all the estimates.
\end{proof}

\section{Weak Solution and Regular Solution}\label{S-WSRS}
\subsection{Weak Solution}
In this section, we establish the existence of a weak solution for \eqref{NLP} in the sense of Definition \ref{WSD}, proving Theorem \ref{EOWS}. We base our arguments on the approaches presented in \cite{FAAS} and \cite{YC}. For adopting the control $u$ that appears non-linearly in \eqref{NLP}, we define an appropriate penalized form and give a brief proof of Theorem \ref{EOWS}. This penalized form allows us to verify the hypothesis $|m|=1$.



\begin{proof}[Proof of Theorem \ref{EOWS}]
	Consider the following penalized problem:
	\begin{equation}\label{WPP}
		\begin{cases}
			(m^k)_t  -m^k \times (m^k)_t = 2 \Delta m^k -2k\left(|m^k|^2-1\right)m^k+2u\ \ \ \ \text{in}\ \Omega_T,\\
			\frac{\partial m^k}{\partial \eta}=0 \ \ \ \ \ \ \ \ \ \ \text{on} \ \partial \Omega_T,\\
			m^k(\cdot,0)=m_0 \ \ \text{in} \ \Omega.
		\end{cases}	
	\end{equation}
	Let $\{\xi_i\}_{i=1}^{\infty}$ be an orthonormal basis of $L^2(\Omega)$ consisting of eigenvectors for  $-\Delta$ operator with vanishing Neumann boundary condition. Suppose $W_n=\text{span}\{\xi_1,\xi_2,...,\xi_n\}$ and $\mathbb{P}_n:L^2\to W_n$ be the orthogonal projection.  Then, consider the Galerkin system of \eqref{WPP}
	\begin{equation}\label{WGA}
		\begin{cases}
			\big((m^k_n)_t , \xi_i\big)-\big(m^k_n\times (m^k_n)_t, \xi_i\big) = 2 \left(\Delta m^k_n,\xi_i\right) -2 k \left(\big(|m^k_n|^2-1\big)m^k_n,\xi_i\right) +2\ (u,\xi_i),\\
			(m^k_n(0),\xi_i)=(m_0,\xi_i),
		\end{cases}
	\end{equation}
	where $m^k_n=\sum_{i=1}^{n} a_{ni}(t)\ \xi_i\in W_n$ and $m^k_n(0)=\mathbb{P}_n(m_0)$. 

    The local solvability of system \eqref{WGA} can be directly inferred from classical theories of ordinary differential equations. For more details on the solvability of such ODE one can refer to \cite{FAAS} and \cite{SPSK}. 	Moreover, by an appropriate \textit{a priori} estimate, we can find that the solution to system \eqref{WGA} exists on the entire time interval $[0,T]$. 
	
	\noindent Now, multiplying system \eqref{WGA} by $(a_{ni})_t$, summing over $0$ to $n$ and then taking an integration over $0$ to $t$, we obtain
	\begin{eqnarray}
		\lefteqn{\frac{1}{2}\int_0^t\int_{\Omega}\left| (m^k_n)_t \right|^2dx\ dt+  \int_{\Omega}|\nabla m^k_n|^2 dx+ \frac{k}{2}\int_{\Omega} \left(|m^k_n|^2-1\right)^2 dx}\nonumber\\
		&& \leq 2 \int_0^t \|u(s)\|^2_{L^2(\Omega)} ds+ \int_{\Omega}|\nabla m^k_n(0)|^2 dx+ \frac{k}{2} \int_{\Omega} \left(|m^k_n(0)|^2-1\right)^2 dx.\label{WSGAEE}
	\end{eqnarray}
	Using the property $\int_{\Omega}|m^k_n|^2 dx \leq \frac{1}{2} \int_{\Omega}\left(|m^k_n|^2-1\right)^2 dx+ \frac{3}{2}|\Omega|$ and $\|\mathbb{P}_n(m_0)\|_{H^1(\Omega)} \leq  \|m_0\|_{H^1(\Omega)}$(see, Proposition 1, \cite{GCRJ}), we find that $\{m^k_n\}$ and $\{|m^k_n|^2-1\}$ are uniformly bounded in $L^{\infty}(0,T;H^1(\Omega))$ and $L^\infty(0,T;L^2(\Omega))$ respectively. Also, $\left\{(m^k_n)_t \right\}$ is uniformly bounded in $L^2(0,T;L^2(\Omega))$. Now, using reflexive weak compactness and Aubin-Lions-Simon lemma for finding a sub-sequence and following the ideas used in \cite{FAAS}, we can find a weak solution $m^k$ of the penalized form \eqref{WPP} satisfying the energy estimate
	\begin{equation*}
		\frac{1}{2}\int_{\Omega_T}\left| (m^k)_t \right|^2dx\ dt+ \int_{\Omega}|\nabla m^k|^2 dx+ \frac{k}{2}\int_{\Omega} \left(|m^k|^2-1\right)^2 dx \leq  2 \ \|u\|^2_{L^2(0,T;L^2(\Omega))} +\ \|\nabla m_0\|^2_{L^2(\Omega)}.
	\end{equation*} 
	Again using the uniform bound for $m^k$ for finding a sub-sequence that converges to $m$, we can show that $m$  weakly satisfies the system \eqref{NLP} in the sense of Definition \ref{WSD}. For more details of the proof, one may refer to \cite{FAAS}.
\end{proof}	

\subsection{Existence of Regular Solution}\label{SECSS}
  Since, solvability of the linearized system and the adjoint system demands more regularity then that of weak solutions, in this subsection we will prove a condition under which any weak solution of system \eqref{NLP} will be a regular solution. 
\begin{proof}[Proof of Theorem \ref{T-SS}]
	In the previous work, the authors (see, Theorem 2.1, \cite{SPSK}), proved the existence of a unique local regular solution for any control in $L^2(0,T;H^1(\Omega))$ with initial data satisfying condition \eqref{IC}.
	Let $u$ be any control in $L^2(0,T;H^1(\Omega))$ such that the weak solution $m$ satisfies $\|\nabla m\|_{L^4(0,T;L^4(\Omega))}<\infty$. For such a control, suppose $T^*<T$ be the maximum time upto which the regular solution exist. Therefore, taking $L^2$ inner product of \eqref{EP} with $-\Delta m$ and using the fact that $m \cdot \Delta m=-|\nabla m|^2$, we get
	\begin{eqnarray*}
	\lefteqn{\frac{1}{2} \frac{d}{dt} \|\nabla m(t)\|^2_{L^2(\Omega)} + \int_\Omega |\Delta m(t)|^2\ dx }\\
	&&=- \int_\Omega | \nabla m|^2 m\cdot \Delta m\ dx -  \int_\Omega (m \times u)\cdot \Delta m\ dx +  \int_\Omega (m \times (m \times u))\cdot \Delta m\ dx\\
	&&\leq \|\nabla m(t)\|^4_{L^4(\Omega)} + \frac{1}{2} \|\Delta m(t)\|^2_{L^2(\Omega)} + 2\ \|u(t)\|^2_{L^2(\Omega)}, \ \ \ \ \ \ \ \ \ \ \ \ \forall \ t\in [0,T^*).		
	\end{eqnarray*}
By considering the time integration over $0$ to $t$, we find the following estimate for all $t\in[0,T^*)$
	\begin{equation}\label{E2}
	\|\nabla m(t)\|^2_{L^2(\Omega)} + \int_0^t \|\Delta m(\tau)\|^2_{L^2(\Omega)} d\tau \leq  \|\nabla m_0\|^2_{L^2(\Omega)}+ 2 \int_0^t \|\nabla m(t)\|^4_{L^4(\Omega)} dt + 4 \int_0^t \|u(t)\|^2_{L^2(\Omega)} dt.	
\end{equation}
Now, taking `$\nabla$' in both sides of system \eqref{EP}, hitting with $-\nabla \Delta m$, we have
\begin{align}\label{EQ3}
&\frac{1}{2}\frac{d}{dt} \| \Delta m(t)\|^2_{L^2(\Omega)} + \int_{\Omega} |\nabla  \Delta m(t)|^2\ dx 
=- \int_\Omega  \nabla \left(|\nabla m|^2m\right) \cdot \nabla \Delta m
- \int_\Omega  \nabla \big(m \times \Delta m\big) \cdot \nabla \Delta m\ dx\ \ \ \ \ \ \nonumber\\
&\hspace{2cm}- \int_\Omega  \nabla\big(m \times u(t)\big) \cdot \nabla\Delta m\ dx
+ \int_\Omega  \nabla\big(m \times (m \times u(t))\big) \cdot \nabla\Delta m\ dx 
:= \sum_{i=1}^4 E_i.
\end{align}
For the first term $E_1$, employing H\"older's inequlaity, followed by the equality $|m|=1$, and the estimates \eqref{ES4}, \eqref{ES6} and \eqref{ES8}, we derive
\begin{flalign*}
	E_1 &= - \int_\Omega \left[ 2  \nabla m (D^2m)m \cdot \nabla \Delta m \  - |\nabla m|^2 \nabla m \cdot \nabla \Delta m\right] \ dx&\\
	&\leq 2 \ \|\nabla m(t)\|_{L^\infty(\Omega)} \|D^2 m(t)\|_{L^2(\Omega)} \| \nabla \Delta m(t)\|_{L^2(\Omega)} +    \| \nabla m(t)\|^3_{L^6(\Omega)} \| \nabla \Delta m(t)\|_{L^2(\Omega)}\\ 
	&\leq C\ \|\nabla m(t)\|^{\frac{1}{2}}_{L^2(\Omega)} \|\Delta m(t)\|_{L^2(\Omega)} \|\nabla \Delta m(t)\|^{\frac{3}{2}}_{L^2(\Omega)} + C\ \|\nabla m(t)\|_{L^2(\Omega)} \|\Delta m(t)\|^2_{L^2(\Omega)} \|\nabla \Delta m(t)\|_{L^2(\Omega)}\\   	
	&\leq  \epsilon    \int_{\Omega}|\nabla \Delta m(t)|^2dx + C(\epsilon) \ \| \nabla m(t)\|_{L^2(\Omega)}^2\ \|\Delta m(t)\|^4_{L^2(\Omega)}.
\end{flalign*}
For the second term $E_2$,  applying H\"older's inequlaity, the corss-product property $(m \times \nabla\Delta m)\cdot \nabla \Delta m=0$ and estimate \eqref{ES8}, we find 
\begin{flalign*}
	E_2 &= - \int_\Omega (\nabla m \times \Delta m) \cdot\nabla \Delta m \ dx \leq   \|\nabla m(t)\|_{L^\infty(\Omega)} \|\Delta m(t)\|_{L^2(\Omega)} \|\nabla \Delta m(t)\|_{L^2(\Omega)}&\\
	&\leq \epsilon \int_{\Omega}|\nabla \Delta m(t)|^2dx + C(\epsilon) \ 
	\|\nabla m(t)\|^2_{L^2(\Omega)} \|\Delta m(t)\|^4_{L^2(\Omega)}.
\end{flalign*}
Similarly, we can obtain the estimates for $E_3$ and $E_4$, by implementing H\"{o}lder's inequality and making use of the fact $|m|=1$. So applying these facts with the estimate \eqref{ES3} and the embeddings $H^1(\Omega) \hookrightarrow L^4(\Omega)$, we deduce
\begin{flalign*}
	E_3 &= - \int_\Omega (\nabla m \times u)\cdot \nabla \Delta m \ dx- \int_\Omega (m \times \nabla u)\cdot\nabla \Delta m \ dx&\\
	& \leq  \ \|\nabla m(t)\|_{L^4(\Omega)} \|u(t)\|_{L^4(\Omega)} \|\nabla \Delta m(t)\|_{L^2(\Omega)} + \|\nabla u(t)\|_{L^2(\Omega)} \|\nabla \Delta m(t)\|_{L^2(\Omega)}\\
	&\leq  \epsilon \int_{\Omega}|\nabla \Delta m(t) |^2dx + C(\epsilon) \ \left(1+\|\Delta m(t)\|^2_{L^2(\Omega)}\right)\ \|u(t)\|^2_{H^1(\Omega)},
\end{flalign*}
and
\begin{flalign*}
	E_4 &= \int_\Omega (\nabla m \times (m \times u))\cdot \nabla \Delta m \ dx+\int_\Omega ( m \times (\nabla m \times u))\cdot \nabla \Delta m \ dx + \int_\Omega (m \times (m \times \nabla u))\cdot \nabla \Delta m \ dx\\
	& \leq  2\ \|\nabla m(t)\|_{L^4(\Omega)} \|u(t)\|_{L^4(\Omega)} \|\nabla \Delta m(t)\|_{L^2(\Omega)} +\  \|\nabla u(t)\|_{L^2(\Omega)} \|\nabla \Delta m(t)\|_{L^2(\Omega)}\\
	&\leq  \epsilon\ \int_{\Omega}|\nabla \Delta m(t) |^2dx + C(\epsilon)\ \left(1+\|\Delta m(t)\|^2_{L^2(\Omega)}\right)\ \|u(t)\|^2_{H^1(\Omega)}.
\end{flalign*}
Now, substituting the estimates for $E_i$ for $i=1$ to $4$ in \eqref{EQ3}, choosing $\epsilon=1/8$ and doing a time integration, we obtain
	\begin{multline*}
		\|\Delta m(t)\|^2_{L^2(\Omega)} +  \int_0^t \| \nabla \Delta m(\tau)\|^2_{L^2(\Omega)}\ d\tau \leq \|\Delta m_0\|^2_{L^2(\Omega)}  
		+C\ \bigg[  \int_0^t \|\nabla m\|^2_{L^2(\Omega)} \ \left( \|\Delta m(\tau)\|^2_{L^2(\Omega)}\right)^2 \ d\tau\\
			+  \int_0^t  \ \left(1 + \|\Delta m(\tau)\|^2_{L^2(\Omega)}\right) \  \|u(\tau)\|^2_{H^1(\Omega)} \ d\tau \bigg],\ \ \forall\ t\in [0,T^*).
	\end{multline*}	
	Applying Gronwall's inequality and using the estimate \eqref{E2}, we derive
	\begin{align}
		\|\Delta m(t)\|^2_{L^2(\Omega)} &+  \int_0^t \| \nabla \Delta m(\tau)\|^2_{L^2(\Omega)}\ d\tau  \leq \|\Delta m_0\|^2_{L^2(\Omega)}\nonumber\\
		& \times  \exp\left(C\ \|\nabla m\|^2_{L^\infty(0,t;L^2(\Omega))} \|\Delta m\|^2_{L^2(0,t;L^2(\Omega))}+ C\ \|u\|^2_{L^2(0,T;H^1(\Omega))}\right),\ \ \forall\ t\in [0,T^*).\label{E3}	
	\end{align}
	Substituting the bounds for $\|\nabla m\|^2_{L^\infty(0,t;L^2(\Omega))}$ and $\|\Delta m\|^2_{L^2(0,t;L^2(\Omega))}$ from estimate \eqref{E2} in estimate \eqref{E3}, we obtain
	\begin{align}\label{E4}
		\|\Delta m(t)\|^2_{L^2(\Omega)} &+  \int_0^t \| \nabla \Delta m(\tau)\|^2_{L^2(\Omega)}\ d\tau  \leq \|\Delta m_0\|^2_{L^2(\Omega)}\\
		& \times  \exp\left(C\ \left[ 1 + \|\nabla m_0\|^4_{L^2(\Omega)} +\ \|\nabla m\|^8_{L^4(0,T^*;L^4(\Omega))}+ \ \|u\|^4_{L^2(0,T;H^1(\Omega))} \right] \right),\ \ \forall\ t\in [0,T^*).	\nonumber
	\end{align}
Therefore, the solution doesn't blow up at $T^*$, which contradicts out assumption that $T^*<T$ is the maximal time of existence. Hence the regular solution of system \eqref{NLP} exists on the entire time domain $[0,T]$.

	Next, we take square of the $H^1(\Omega)$ norm of $m_t$ in equation \eqref{NLP} and then integrate over time to obtain
	\begin{align*}
		\int_0^T \|m_t(t)\|^2_{H^1(\Omega)}\ dt \leq & \ 4\left(\int_0^T \|m\times \Delta m\|^2_{H^1(\Omega)}\ dt +\int_0^T \|m \times u\|^2_{H^1(\Omega)} \ dt\right.\\
		&\left. +\int_0^T \|m\times \big(m\times \Delta m\big)\|^2_{H^1(\Omega)}\ dt + \int_0^T \|m\times \big(m\times u\big)\|^2_{H^1(\Omega)}\ dt\right).
	\end{align*}
Now, estimating the terms on the right hand side using Lemma \ref{PROP2} and using the equality $|m|=1$, we derive
\begin{equation*}
		\int_0^T \|m_t(t)\|^2_{H^1(\Omega)}\ dt \leq  C\ \|m\|^2_{L^\infty(0,T;H^2(\Omega))} \ \|m\|^2_{L^2(0,T;H^3(\Omega))} +C\ \|m\|^2_{L^\infty(0,T;H^2(\Omega))}\ \|u\|^2_{L^2(0,T;H^1(\Omega))}.
\end{equation*}
Substituting the bounds of $\|m\|^2_{L^\infty(0,T;H^2(\Omega))}$ and $\|m\|^2_{L^2(0,T;H^3(\Omega))}$ from estimate \eqref{E4}, we find
	 \begin{equation}\label{E5}
	 	\|m_t\|^2_{L^2(0,T;H^1(\Omega))}  \leq  \exp\left(C\ \left[ 1 + \|\Delta m_0\|^4_{L^2(\Omega)} +\ \|\nabla m\|^8_{L^4(0,T;L^4(\Omega))}+ \ \|u\|^4_{L^2(0,T;H^1(\Omega))} \right] \right).		 	
	 \end{equation}
Combining estimates \eqref{E4} and \eqref{E5}, we find the required result \eqref{SSEE2}. Hence the proof.
\end{proof}


\section{Existence of Optimum and First Order Optimality Condition}\label{S-FOOC}
\subsection{Existence of Optimal Control}


For the optimal control problem to have practical relevance, it is essential to confirm the existence of at least one globally optimal solution. This assurance is provided by the following theorem.

\begin{proof}[Proof of Theorem \ref{T-EOOC}]
	
	Since the functional $\mathcal J(\cdot, \cdot)$ is bounded below and $\mathcal{A}\neq \emptyset$, there exists a minimizing sequence $\big\{(m_n,u_n)\big\} \subset \mathcal{A}$ such that
	\begin{equation*}
		\inf_{(m,u) \in \mathcal{A}} \mathcal J(m,u)=\lim_{n \to \infty} \mathcal J(m_n,u_n)=\alpha.	
	\end{equation*}
	Since $(m_n,u_n)\in \mathcal{A}$, it follows that for each $n$, the pair $(m_n,u_n)$ serves as a regular solution of the following system:	
	\begin{equation}
		\begin{cases}
			(m_n)_t= m_n \times (\Delta m_n +u_n)-  m_n \times (m_n \times (\Delta m_n +u_n)) \ \ \ \ \  \text{in}\ \Omega_T, \\
			\frac{\partial m_n}{\partial \eta}=0\ \ \ \ \ \ \  \ \ \ \ \ \ \ \text{in}\ \partial\Omega_T,\label{EOC1}\\
			m_n(\cdot,0)=m_0 \ \ \ \ \ \text{in} \ \Omega.
		\end{cases}	 
	\end{equation}
	As the controls $u_n\in \mathcal{U}_{ad}$ for each $n$, so $\{u_n\}$ is uniformly bounded in $L^2(0,T;H^1(\Omega))$. Then there exists a sub-sequence again denoted as $\{u_n\}$ such that $u_n \rightharpoonup \widetilde{u}$ weakly in $L^2(0,T;H^1(\Omega))$ for some element $\widetilde{u} \in L^2(0,T;H^1(\Omega))$. Now, by using estimate \eqref{SSUB}, we can find a uniform bound for $\{m_n\}$  in $L^2(0,T;H^3(\Omega))$ $\cap\ C([0,T];H^2(\Omega))$ and for $\left\{ (m_n)_t\right\}$ is in $L^2(0,T;H^1(\Omega))$.	
	
	Then, by Aubin–Lions–Simon compactness theorem, $\{m_n\}$ is relatively compact in $C([0,T];H^1(\Omega))$ $\cap L^2(0,T;H^2(\Omega))$. Therefore, there exists sub-sequence (again represented as $\left\{ (m_n,u_n)\right\}$) such that
	\begin{eqnarray*} \left\{\begin{array}{ccccl}
			u_n &\overset{w}{\rightharpoonup} & \widetilde{u} \ &\mbox{weakly in}&  L^2(0,T;H^1(\Omega)),\\
			m_n &\overset{w}{\rightharpoonup} & \widetilde{m} \  &\mbox{weakly in}&  L^2(0,T;H^3(\Omega)),\\
			(m_n)_t &\overset{w}{\rightharpoonup} & \widetilde{m}_t \ &\mbox{weakly in}&  L^2(0,T;H^1(\Omega)),\\
			m_n &\overset{s}{\to} & \widetilde{m} \  &\mbox{strongly in}& C([0,T];H^1(\Omega))\cap L^2(0,T;H^2(\Omega)),	 \ \mbox{as} \  \ n\to \infty. \label{P2}
		\end{array}\right.	
	\end{eqnarray*}
	Taking limit $n \to \infty$ in \eqref{EOC1} and using the above convergences, we obtain that $(\widetilde{m},\widetilde{u})$ is a regular solution of system \eqref{EP}, that is, $\widetilde{u}\in \mathcal{U}$. Indeed, by weak sequential lower semi-continuity $\mathcal{J}(\widetilde{m},\widetilde{u})\leq 
	\liminf_{n \to \infty} \mathcal{J}(m_{u_n},u_n)\leq R/2$, which implies $\widetilde{u}\in \overline{\mathcal{U}_{R/2}}$. Also, as the set $\mathcal{U}_{a,b}$ is closed and convex, therefore it is weakly closed. Consequently, we have $\widetilde{u}\in \mathcal{U}_{ad}$.

	Also, since $m_n \to \widetilde{m}$ strongly in $L^4(0,T;H^1(\Omega)),$ $m_n(\cdot,T)\rightharpoonup \widetilde{m}(\cdot,T)$ weakly in $L^2(\Omega)$ and $u_n \rightharpoonup \widetilde{u}$ weakly in $L^2(0,T;H^1(\Omega))$, the functional $\mathcal J(\cdot,\cdot)$ is weakly lower semi-continuous, that is,
	\begin{equation}\label{I2}
		\mathcal J(\widetilde{m},\widetilde{u}) \leq \liminf_{n \to \infty}  \mathcal J(m_n,u_n)=\lim_{n \to \infty} \mathcal J(m_n,u_n)=\alpha.
	\end{equation}
	Since $\alpha$ is the infimum of the functional $\mathcal J$ over $\mathcal{A}$, so that 
	$$\alpha \leq \mathcal J(\widetilde{m},\widetilde{u}).$$
	Hence combining with \eqref{I2}, we get 
	$\displaystyle{\mathcal J(\widetilde{m},\widetilde{u}) = \alpha = \inf_{(m,u) \in \mathcal{A}} \mathcal J(m,u).}$
	This completes the proof.
\end{proof}

\subsection{Control-to-state operator}

Suppose $\widetilde{m}$ be the unique regular solution of the system \eqref{NLP} corresponding to the control $\widetilde{u}\in \mathcal{U}_R$ and initial data $m_0$ satisfying condition \eqref{IC}. Then consider the following linearized system 
\begin{equation}\label{CLE}
	(L-LLG)\begin{cases}
		\begin{array}{l}
			\mathcal{L}_{\widetilde{u}}z=f \ \ \ \ \text{in}\ \Omega_T,\\
			\frac{\partial z}{\partial \eta}=0 \ \ \ \ \ \  \text{in}\ \partial \Omega_T, \ \ \ z(x,0)=z_0\ \ \text{in}\ \Omega,
		\end{array}
	\end{cases}	
\end{equation}
where the operator $\mathcal{L}_{\widetilde{u}}$ is defined as
\begin{equation}\label{CLO}
	\mathcal{L}_{\widetilde{u}}z:= z_t-\Delta z -2 (\nabla \widetilde{m}\cdot \nabla z)\widetilde{m}- |\nabla \widetilde{m}|^2z-z\times \Delta \widetilde{m} - \widetilde{m} \times \Delta z - z \times \widetilde{u} +z \times (\widetilde{m} \times \widetilde{u})+ \widetilde{m} \times (z \times \widetilde{u}).	
\end{equation}

\textbf{Note:} As $\widetilde{m}$ is a regular solution of the problem \eqref{NLP}, in the subsequent analysis we have used the inequality $1= \frac{1}{|\Omega|}|\Omega|=\frac{1}{|\Omega|}\|\widetilde{m}\|^2_{L^2(\Omega)} \leq C\ \|\widetilde{m}\|^2_{H^2(\Omega)}$ without mentioning it repeatedly.\\
\begin{Lem}\label{L-SLS}
	For any $f\in L^2(0,T;H^1(\Omega))$ there exist a unique regular solution $z\in L^2(0,T;H^3(\Omega))\cap L^{\infty}(0,T;H^2(\Omega))$ of the linearized system \eqref{CLE}. Moreover, the following estimate holds:
	\begin{align}\label{LSSE}
		&\|z\|^2_{L^{\infty}(0,T;H^2(\Omega))}+ \|z\|^2_{L^2(0,T;H^3(\Omega))} + \|z_t\|^2_{L^2(0,T;H^1(\Omega))} \leq \left(\|z_0\|^2_{L^2(\Omega)}+\|\Delta z_0\|^2_{L^2(\Omega)} +  \|f\|^2_{L^2(0,T;H^1(\Omega))}\right) \nonumber\\
		&\hspace{1cm}\times \ \exp\bigg\{C  \left(\|\widetilde{m}\|^2_{L^\infty(0,T;H^2(\Omega))}\|\widetilde{m}\|^2_{L^2(0,T;H^3(\Omega))}+\|\widetilde{m}\|^2_{L^\infty(0,T;H^2(\Omega))} \|\widetilde{u}\|^2_{L^2(0,T;H^1(\Omega))}\right) \bigg\}.
	\end{align}

\end{Lem}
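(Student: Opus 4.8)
The plan is to construct the solution by the Galerkin method, exploiting that \eqref{CLE} is a \emph{linear} parabolic system whose principal part is the heat operator $z_t-\Delta z$, with all remaining terms in \eqref{CLO} being lower order and linear in $z$, their coefficients depending only on the fixed regular state $\widetilde m$ and control $\widetilde u$. I would take the same orthonormal eigenbasis $\{\xi_i\}$ of $-\Delta$ with Neumann data used in the proof of Theorem \ref{EOWS}, set $z_n=\sum_{i=1}^n a_{ni}(t)\xi_i$, and project \eqref{CLE} onto $W_n=\mathrm{span}\{\xi_1,\dots,\xi_n\}$. The resulting system for the coefficients $a_{ni}$ is a linear ODE system with $L^2$-in-time coefficients built from $\widetilde m$ and $\widetilde u$, so classical ODE theory yields a unique local solution, which the a priori bounds below extend to all of $[0,T]$.

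The core of the argument is a hierarchy of energy estimates obtained by testing with successively higher-order multipliers, mirroring the proof of Theorem \ref{T-SS}. First, testing the Galerkin equation with $z_n$ controls $\|z_n\|_{L^\infty(0,T;L^2)}$ and $\|\nabla z_n\|_{L^2(0,T;L^2)}$. Next, testing with $-\Delta z_n$ produces $\tfrac12\frac{d}{dt}\|\nabla z_n\|^2_{L^2}+\|\Delta z_n\|^2_{L^2}$ on the left, after which each term of \eqref{CLO} is estimated by H\"older's inequality together with the Gagliardo-Nirenberg bounds \eqref{ES3}--\eqref{ES8} and the uniform bounds on $\widetilde m$; the top-order term $\widetilde m\times\Delta z_n$ poses no difficulty here since $(\widetilde m\times\Delta z_n)\cdot\Delta z_n=0$. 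Finally, applying $\nabla$ to the equation and testing with $-\nabla\Delta z_n$ yields $\tfrac12\frac{d}{dt}\|\Delta z_n\|^2_{L^2}+\|\nabla\Delta z_n\|^2_{L^2}$ on the left, which, through the equivalent norms of Lemma \ref{EN}, delivers the $L^\infty(0,T;H^2)$ and $L^2(0,T;H^3)$ control claimed in \eqref{LSSE}.

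The main obstacle is closing this last estimate, because differentiating terms such as $\widetilde m\times\Delta z_n$, $(\nabla\widetilde m\cdot\nabla z_n)\widetilde m$, and $|\nabla\widetilde m|^2 z_n$ produces contributions at the highest order in $z_n$. The decisive observation is that the worst piece $\widetilde m\times\nabla\Delta z_n$ is annihilated when paired against $\nabla\Delta z_n$, leaving only terms of the form $\nabla\widetilde m\times\Delta z_n$; these, and the analogous contributions from the other terms nonlinear in $\widetilde m$ as well as the terms containing $\widetilde u$ (in particular $z_n\times\nabla\widetilde u$), are handled by H\"older's inequality, the embeddings $H^1\hookrightarrow L^p$ and $H^2\hookrightarrow L^\infty$, and the interpolation inequalities \eqref{ES6}--\eqref{ES8}, each time isolating a factor $\|\nabla\Delta z_n\|_{L^2}$ to be absorbed into the left-hand side by Young's inequality with a small parameter $\epsilon$. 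The remaining factors reduce to $\|\Delta z_n\|^2_{L^2}$ times coefficients depending on $\|\widetilde m\|_{H^2}$, $\|\widetilde m\|_{H^3}$, and $\|\widetilde u\|_{H^1}$. Crucially, because the equation is linear in $z_n$, these coefficients are independent of $z_n$, so Gronwall's inequality closes the estimate directly and produces exactly the exponential factor in \eqref{LSSE}, with no finite-time blow-up to rule out.

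It remains to pass to the limit and to record the auxiliary bounds. The uniform estimates provide weak limits $z_n\rightharpoonup z$ in $L^2(0,T;H^3)$ and $(z_n)_t\rightharpoonup z_t$, while the Aubin-Lions-Simon lemma gives strong convergence in $L^2(0,T;H^2)$; since the dependence on $z$ is linear and $\widetilde m,\widetilde u$ are fixed, this suffices to pass to the limit in every term of \eqref{CLO}, and the initial condition is recovered in the usual way. The bound on $\|z_t\|_{L^2(0,T;H^1)}$ is then read off directly from the equation $z_t=\Delta z+(\text{lower order terms})$ by estimating each term in $H^1$ via Lemma \ref{PROP2} and the already-established $H^2$ and $H^3$ bounds. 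Finally, uniqueness follows immediately from linearity: the difference of two solutions solves \eqref{CLE} with $f=0$ and $z_0=0$, and the energy estimate \eqref{LSSE} forces it to vanish.
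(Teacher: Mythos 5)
Your proposal is correct and follows essentially the same route as the paper's proof: a Galerkin construction whose a priori estimates come from testing with $z$ and then testing the gradient of the equation with $-\nabla\Delta z$, exploiting the cross-product cancellation of the top-order term $\widetilde m\times\nabla\Delta z$ against $\nabla\Delta z$, absorbing the remaining pieces via H\"older, the embeddings, and Young's inequality, closing with Gronwall (which works directly thanks to linearity), and finally reading off the $z_t$ bound from the equation via Lemma \ref{PROP2} and getting uniqueness from linearity. The only cosmetic difference is your extra intermediate test with $-\Delta z_n$, which the paper bypasses by combining the $L^2$-level estimate with the $\nabla\Delta z$-level estimate through the norm equivalence of Lemma \ref{EN}.
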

\begin{proof}
By appealing to Faedo-Galerkin approximation technique, we can show the existence and uniqueness of system \eqref{CLE}. So, here we will only show a priori estimates for the solution. Considering the $L^2$ inner product of \eqref{CLE} with $z$ and employing cross product properties from Lemma \ref{CPP},  we have
\begin{equation}\label{LSE1}
	\frac{d}{dt} \|z(t)\|^2_{L^2(\Omega)} +  \|\nabla z(t)\|^2_{L^2(\Omega)} \leq C\  \left( \|\widetilde{m}(t)\|^2_{H^3(\Omega)}+\|\widetilde{u}(t)\|^2_{H^1(\Omega)}\right)\|z(t)\|^2_{L^2(\Omega)} + \ \|f(t)\|^2_{L^2(\Omega)}.	
\end{equation}
By taking gradient of system \eqref{CLE} and then considering inner product with $-\nabla \Delta z$, we find
\begin{eqnarray}\label{LSH2E}
	\lefteqn{\frac{1}{2} \frac{d}{dt} \|\Delta z(t)\|^2_{L^2(\Omega)} +  \int_\Omega |\nabla \Delta z(t)|^2 dx = -2 \int_\Omega \nabla \big( \widetilde{m}(\nabla \widetilde{m} \cdot \nabla z)\big) \cdot \nabla \Delta z\ dx-\ \int_\Omega \nabla \big( |\nabla \widetilde{m}|^2z\big) \cdot  \nabla  \Delta z\ dx}\nonumber\\
	&&- \int_\Omega \nabla \big(z \times \Delta \widetilde{m}\big) \cdot \nabla \Delta z\ dx - \int_\Omega \nabla \big( \widetilde{m} \times \Delta z\big)  \cdot \nabla \Delta z\ dx-  \int_\Omega \nabla \big(z \times \widetilde{u}\big)\cdot \nabla \Delta z\ dx\nonumber\\
	&&+\  \int_\Omega \nabla \big(z \times (\widetilde{m} \times \widetilde{u})\big)\cdot  \nabla \Delta z\ dx + \int_\Omega \nabla \big(\widetilde{m} \times (z \times \widetilde{u})\big)\cdot \nabla \Delta z\ dx - \int_\Omega \nabla f\cdot  \nabla \Delta z\ dx:= \sum_{i=1}^{8}\Gamma_i. \ \ \ \ \ \ \ 
\end{eqnarray}

	Let us estimate some of the terms on the right hand side. For the first term $\Gamma_1$, applying H\"older's inequality and the embeddings $H^1(\Omega) \hookrightarrow L^p(\Omega)\ \text{for} \ p \in [1,\infty)$ and $H^2(\Omega) \hookrightarrow L^\infty(\Omega)$, we get 
\begin{flalign*}
	\Gamma_1 &=-2 \int_\Omega \left[\nabla \widetilde{m}(\nabla \widetilde{m} \cdot \nabla z)+ \widetilde{m}(D^2\widetilde{m}\cdot\nabla z)+\widetilde{m}(\nabla \widetilde{m}\cdot D^2z)\right]\cdot \nabla \Delta z\ dx&\\
	&\leq \ 2\ \|\nabla \widetilde{m}(t)\|_{L^4(\Omega)} \|\nabla \widetilde{m}(t)\|_{L^8(\Omega)}  \|\nabla z(t)\|_{L^8(\Omega)} \|\nabla \Delta z(t)\|_{L^2(\Omega)}\\
	&+\ 2\ \|\widetilde{m}(t)\|_{L^\infty(\Omega)} \left(\|D^2\widetilde{m}(t)\|_{L^4(\Omega)} \|\nabla z(t)\|_{L^4(\Omega)}+\|\nabla \widetilde{m}(t)\|_{L^\infty(\Omega)} \|D^2z(t)\|_{L^2(\Omega)} \right) \|\nabla \Delta z(t)\|_{L^2(\Omega)} \\
	&\leq \epsilon   \int_\Omega |\nabla \Delta z(t)|^2 dx +\ C(\epsilon)   \ \left(\|\widetilde{m}(t)\|^4_{H^2(\Omega)}+ \|\widetilde{m}(t)\|^2_{H^2(\Omega)} \|\widetilde{m}(t)\|^2_{H^3(\Omega)}\right) \|z(t)\|^2_{H^2(\Omega)}.	
\end{flalign*}
A similar estimate for $\Gamma_2$, $\Gamma_3$ and $\Gamma_4$ can be obtained. Now, let us estimate one of the terms containing the control 
\begin{flalign*}
	\Gamma_6 &= \int_\Omega \Big[ \nabla z \times (\widetilde{m} \times \widetilde{u})+z \times (\nabla \widetilde{m} \times \widetilde{u})+z \times (\widetilde{m} \times \nabla \widetilde{u})\Big]\cdot  \nabla \Delta z\ dx&\\
	&\leq  \|\nabla z(t)\|_{L^4(\Omega)} \|\widetilde{m}(t)\|_{L^8(\Omega)}  \|\widetilde{u}(t)\|_{L^8(\Omega)} \|\nabla \Delta z(t)\|_{L^2(\Omega)}\\
	&+\  \|z(t)\|_{L^\infty(\Omega)} \left(\|\nabla \widetilde{m}(t)\|_{L^4(\Omega)} \|\widetilde{u}(t)\|_{L^4(\Omega)}+\| \widetilde{m}(t)\|_{L^\infty(\Omega)} \|\nabla \widetilde{u}(t)\|_{L^2(\Omega)} \right) \|\nabla \Delta z(t)\|_{L^2(\Omega)} \\
	&\leq \epsilon \int_\Omega |\nabla \Delta z(t)|^2 \ dx  + C(\epsilon)   \ \|\widetilde{m}(t)\|^2_{H^2(\Omega)} \|\widetilde{u}(t)\|^2_{H^1(\Omega)} \|z(t)\|^2_{H^2(\Omega)}.
\end{flalign*}
By substituting all these estimates in equation \eqref{LSH2E} and choosing a suitable value for $\epsilon$ and adding with \eqref{LSE1}, we get
\begin{eqnarray}\label{LSE1}
	\lefteqn{\frac{d}{dt} \left(\|z(t)\|^2_{L^2(\Omega)}+\|\Delta z(t)\|^2_{L^2(\Omega)}\right)+  \|\nabla z(t)\|^2_{L^2(\Omega)}+\|\nabla \Delta z(t)\|^2_{L^2(\Omega)} } \nonumber\\
	&&\leq C\ \bigg[\|\widetilde{m}(t)\|^2_{H^2(\Omega)} \|\widetilde{m}(t)\|^2_{H^3(\Omega)}  + \|\widetilde{m}(t)\|^2_{H^2(\Omega)} \|\widetilde{u}(t)\|^2_{H^1(\Omega)}\bigg]\ \|z(t)\|^2_{H^2(\Omega)} + C\   \|f(t)\|^2_{H^1(\Omega)}.
\end{eqnarray}
By invoking the inequality $\|z\|_{H^2(\Omega)}\leq C\ \left(\|z\|_{L^2(\Omega)}+\|\Delta z\|_{L^2(\Omega)}\right)$ from Lemma \ref{EN} and applying Gronwall's inequality, we obtain 
\begin{equation}\label{LSE2}
	\sup_{t \in [0,T]} \left(\|z(t)\|^2_{L^2(\Omega)}+\|\Delta z(t)\|^2_{L^2(\Omega)}\right)
	\leq C_1(\Omega,T,z_0,f,\widetilde{m},\widetilde{u}),	
\end{equation}
\begin{flalign*}
\text{where}\ & C_1(\Omega,T,z_0,f,\widetilde{m},\widetilde{u}) = \left(\|z_0\|^2_{L^2(\Omega)}+\|\Delta z_0\|^2_{L^2(\Omega)} +  \|f\|^2_{L^2(0,T;H^1(\Omega))}\right)&\\
&\hspace{2cm}\times \ \exp\bigg\{C  \left(\|\widetilde{m}\|^2_{L^\infty(0,T;H^2(\Omega))}\|\widetilde{m}\|^2_{L^2(0,T;H^3(\Omega))}+\|\widetilde{m}\|^2_{L^\infty(0,T;H^2(\Omega))} \|\widetilde{u}\|^2_{L^2(0,T;H^1(\Omega))}\right) \bigg\}.	
\end{flalign*}

By integrating \eqref{LSE1} over $(0,t)$ and using the bounds for $z$ in $L^\infty(0,T;H^2(\Omega))$ from inequality \eqref{LSE2}, we have
\begin{equation}\label{LSE3}	
	\int_0^T \left(\|\nabla z(\tau)\|^2_{L^2(\Omega)}+\|\nabla \Delta z(\tau)\|^2_{L^2(\Omega)}\right)\ d\tau \leq 	C_1(\Omega,T,z_0,f,\widetilde{m},\widetilde{u}). 
\end{equation}
Now, taking $L^2(0,T;H^1(\Omega))$ of $z_t$ in equation \eqref{CLE} and estimating the terms using Lemma \ref{PROP2}, we derive
\begin{align}\label{LSE4}
	\|z_t\|_{L^2(0,T;H^1(\Omega))} &\leq \|\widetilde{m}\|^2_{L^\infty(0,T;H^2(\Omega))}\ \|\widetilde{m}\|^2_{L^2(0,T;H^3(\Omega))}\ \left(\|z\|^2_{L^\infty(0,T;H^2(\Omega))}+\|z\|^2_{L^2(0,T;H^3(\Omega))}\right)\nonumber\\
	&\hspace{1cm} +\|z\|^2_{L^\infty(0,T;H^2(\Omega))}\ \|\widetilde{m}\|^2_{L^\infty(0,T;H^2(\Omega))}\ \|\widetilde{u}\|^2_{L^2(0,T;H^1(\Omega))}.
\end{align}
Substituting \eqref{LSE2} and \eqref{LSE3} in estimate \eqref{LSE4}, we can obtain the required result \eqref{LSSE}.
\end{proof}

\noindent By virtue of the definition of the control set $\mathcal{U}_R$, we are able to define an operator $G:\mathcal{U}_R \to \mathcal{M}$ called the control-to-state operator such that for every $u\in \mathcal{U}_R$ there exists a unique regular solution $m_u:=G(u)$ of system \eqref{NLP}. Now, we will prove the Lipschitz continuity and Fr\'echet differentiability of this operator.

\begin{Lem}{(Lipschitz Continuity of G)}\label{L-LCCTS}
	The control-to-state operator $G:\mathcal{U}_R \to \mathcal{M}$ is Lipschitz continuous, that is, there exists a constant $C_2>0$ depending on the parameters $\Omega,T,R,m_0$ such that
	\begin{equation}\label{LCCTSO}
		\|G(u_1)-G(u_2)\|_{\mathcal{M}} \leq C_2\ \|u_1-u_2\|_{L^2(0,T;H^1(\Omega))}, \ \ \ \ \ \ \forall \ u_1,u_2 \in \mathcal{U}_R.
	\end{equation}	
\end{Lem}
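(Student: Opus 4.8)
The plan is to compare the two regular solutions $m_i := G(u_i)$, $i=1,2$, by estimating their difference $w := m_1 - m_2$ in the norm of $\mathcal{M}$, treating $v := u_1 - u_2$ as the forcing. Since $m_1$ and $m_2$ both solve \eqref{NLP}, they also solve the equivalent parabolic form \eqref{EP}; subtracting the two copies of \eqref{EP} and splitting each difference of nonlinearities additively via the identities of Lemma \ref{CPP} (using $|\nabla m_1|^2 m_1-|\nabla m_2|^2 m_2=|\nabla m_1|^2 w+(\nabla(m_1+m_2):\nabla w)m_2$, $m_1\times\Delta m_1-m_2\times\Delta m_2=m_1\times\Delta w+w\times\Delta m_2$, and the analogous splittings of the two control cross terms) yields a linear parabolic equation for $w$ of the same structure as the operator $\mathcal{L}_{\widetilde u}$ in \eqref{CLO}:
\begin{align*}
w_t - \Delta w &= |\nabla m_1|^2 w + \big(\nabla(m_1+m_2):\nabla w\big) m_2 + m_1\times\Delta w + w\times\Delta m_2 \\
&\quad + m_1\times v + w\times u_2 - m_1\times(m_1\times v + w\times u_2) - w\times(m_2\times u_2),
\end{align*}
together with the homogeneous data $w(\cdot,0)=0$ (the two solutions share $m_0$) and $\partial w/\partial\eta = 0$. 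The key feature is that $v$ enters only through the two inhomogeneous terms $m_1\times v$ and $-m_1\times(m_1\times v)$.

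Following the hierarchy used in Lemma \ref{L-SLS}, I would run two energy estimates. First, testing with $w$ in $L^2(\Omega)$: the top-order cross term obeys $\int_\Omega (m_1\times\Delta w)\cdot w\,dx = -\int_\Omega(\nabla m_1\times\nabla w)\cdot w\,dx$ after integrating by parts and using $(m_1\times\partial_j w)\cdot\partial_j w=0$ together with the Neumann condition, and every remaining term is handled by Hölder's inequality, the embeddings $H^1\hookrightarrow L^p$, $H^2\hookrightarrow L^\infty$, and Young's inequality, with all coefficients controlled by the uniform bound \eqref{SSUB} on $\mathcal{U}_R$. This gives
\[
\frac{d}{dt}\|w\|_{L^2(\Omega)}^2 + \|\nabla w\|_{L^2(\Omega)}^2 \leq C\,\Phi(t)\,\|w\|_{H^2(\Omega)}^2 + C\,\|v\|_{H^1(\Omega)}^2,
\]
where $\Phi$ is an $L^1(0,T)$ coefficient built from products of $\|m_1\|_{H^3}$, $\|m_2\|_{H^3}$, $\|u_2\|_{H^1}$ whose time integral is finite by \eqref{SSUB}. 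Second, applying $\nabla$ to the $w$-equation and testing with $-\nabla\Delta w$, exactly as in \eqref{LSH2E}, the dissipation $\int_\Omega|\nabla\Delta w|^2\,dx$ absorbs the highest-order contributions via $\epsilon$-Young, the lower-order factors again being bounded through \eqref{SSUB} and Proposition \ref{P-GNI}. Summing the two estimates, invoking the equivalent norm $\|w\|_{H^2}\leq C(\|w\|_{L^2}+\|\Delta w\|_{L^2})$ of Lemma \ref{EN}, and applying Gronwall's inequality with $w(0)=0$ produces
\[
\sup_{t\in[0,T]}\big(\|w\|_{L^2}^2 + \|\Delta w\|_{L^2}^2\big) + \int_0^T\big(\|\nabla w\|_{L^2}^2 + \|\nabla\Delta w\|_{L^2}^2\big)\,dt \leq C\,\|v\|_{L^2(0,T;H^1(\Omega))}^2.
\]

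Finally, I would recover the time-derivative part of the $\mathcal{M}$-norm directly from the equation, estimating $\|w_t\|_{L^2(0,T;H^1(\Omega))}$ term by term through Lemma \ref{PROP2} as in the passage leading to \eqref{LSE4} and feeding in the bound just obtained; combined with $\|w\|_{L^2(0,T;H^3)}\leq C(\|w\|_{L^2(0,T;L^2)}+\|\Delta w\|_{L^2(0,T;H^1)})$ from Lemma \ref{EN}, this bounds $\|w\|_{\mathcal{M}}$ by $C_2\,\|v\|_{L^2(0,T;H^1(\Omega))}$, which is exactly \eqref{LCCTSO} with $C_2=C_2(\Omega,T,R,m_0)$ through \eqref{SSUB}. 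I expect the main obstacle to be the second, highest-order estimate: unlike the linear system \eqref{CLE}, the difference equation carries genuinely quadratic remainders such as $(\nabla(m_1+m_2):\nabla w)m_2$ and $w\times\Delta m_2$, and after applying $\nabla$ these generate terms like $w\times\nabla\Delta m_2$ in which $\nabla\Delta m_2$ only lies in $L^2(0,T;L^2)$. Controlling such terms against the single dissipation $\int_\Omega|\nabla\Delta w|^2\,dx$ forces a careful placement of the $L^\infty$- and $L^4$-norms (via $H^2\hookrightarrow L^\infty$ and $w\in L^\infty(0,T;H^2)$) so that each factor is either absorbed into the dissipation or into an $L^1_t$ coefficient multiplying $\|w\|_{H^2}^2$, keeping the Gronwall argument closed and the final bound linear in $\|v\|_{L^2(0,T;H^1(\Omega))}$.
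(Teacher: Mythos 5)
Your proposal is correct and follows essentially the same route as the paper: form the difference equation for $w=m_1-m_2$ with the analogous algebraic splittings, run the $L^2$ energy estimate and the $\nabla(\cdot)$-tested-with-$-\nabla\Delta w$ estimate exactly as in \eqref{LSH2E}, close via Lemma \ref{EN}, Gronwall, and the uniform bound \eqref{SSUB}, then recover $\|w_t\|_{L^2(0,T;H^1(\Omega))}$ from the equation using Lemma \ref{PROP2}. Your splitting of the cross-product differences (e.g.\ $m_1\times\Delta w+w\times\Delta m_2$ versus the paper's $\shat{m}\times\Delta m_1+m_2\times\Delta\shat{m}$) is a cosmetic variant of the paper's, and your closing remark about terms like $w\times\nabla\Delta m_2$ being absorbed through $H^2(\Omega)\hookrightarrow L^\infty(\Omega)$ and an $L^1_t$ Gronwall coefficient is precisely how the paper's $\Gamma_i$-type estimates handle them.
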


\begin{proof}
Suppose $m_1$ and $m_2$ are two regular solution of system \eqref{NLP} corresponding to the controls $u_1$ and $u_2$ respectively. Let us define $\shat{m} :=m_1-m_2$ and $\shat{u}:=u_1-u_2$. Then $(\shat{m},\shat{u})$ will satisfy the following system 
\begin{equation}\label{EPD}
	\begin{cases}
		\shat{m}_t- \Delta \shat{m} = |\nabla m_1|^2\shat{m} + \nabla \shat{m}\cdot (\nabla m_1+\nabla m_2) m_2 +\shat{m} \times \Delta m_1 +m_2 \times \Delta \shat{m} + m_1 \times \shat{u}  \\
		\hspace{0.7in} +\ \shat{m}\times u_2 - \shat{m} \times (m_1 \times u_1)- m_2 \times (\shat{m} \times u_1)-m_2\times (m_2 \times \shat{u})\ \ \ \ \ (x,t)\in \Omega_T,\\	
		\frac{\partial \shat{m}}{\partial \eta}=0 \ \ \ \ \ \ \ \ (x,t) \in  \partial\Omega_T,\\
		\shat{m}(\cdot,0)=0 \ \ \ \text{in} \ \Omega.
	\end{cases}	
\end{equation}

By estimating the $L^2(\Omega)$ inner product of equation \eqref{EPD} with $\shat{m}$, we obtain
\begin{align}\label{INEQ-1}
	&\frac{1}{2} \frac{d}{dt}\|\shat{m}\|^2_{L^2(\Omega)} + \|\nabla \shat{m}\|^2_{L^2(\Omega)} \leq  \|\shat{u}(t)\|^2_{L^2(\Omega)}\nonumber\\
	&\hspace{1cm}+ C\ \left(1+\|m_1(t)\|^2_{H^2(\Omega)}+\|m_2(t)\|^2_{H^2(\Omega)}+ \|u_1(t)\|^2_{L^2(\Omega)}\right)\ \|\shat{m}(t)\|^2_{H^1(\Omega)} .
\end{align}

Next, we apply the gradient operator to equation \eqref{EPD}, followed by taking the inner product with $\nabla \Delta \hat{m}$. This leads to the following expression
\begin{align*}
		\frac{1}{2} \frac{d}{dt}\|\Delta \shat{m}\|^2_{L^2(\Omega)} &+ \|\nabla \Delta \shat{m}\|^2_{L^2(\Omega)} = \int_{\Omega} \nabla \left( |\nabla m_1|^2\shat{m} \right)\cdot\nabla \Delta \shat{m}\ dx  + \int_{\Omega} \nabla \left(\nabla \shat{m}\cdot (\nabla m_1+\nabla m_2) m_2\right)\cdot \nabla \Delta \shat{m}\ dx\\
		&+\int_{\Omega} \nabla \left(\shat{m} \times \Delta m_1\right)\cdot \nabla \Delta \shat{m}\ dx + \int_{\Omega} \nabla \left(m_2 \times \Delta \shat{m}\right)\cdot \nabla \Delta \shat{m}\ dx +\int_{\Omega}\nabla \left( m_1 \times \shat{u}\right)\cdot \nabla \Delta \shat{m}\ dx\\
		&-\int_{\Omega}\nabla \left( \shat{m} \times (m_1 \times u_1)\right)\cdot \nabla \Delta \shat{m}\ dx-\int_{\Omega}\nabla \left( m_2 \times (\shat{m} \times u_1)\right) \cdot \nabla \Delta \shat{m}\ dx\\
		&+\int_{\Omega} \nabla \left( \shat{m}\times u_2 \right)\cdot \nabla \Delta \shat{m}\ dx -\int_{\Omega} \nabla \left( m_2\times (m_2 \times \shat{u})\right) \cdot \nabla \Delta \shat{m}\ dx.
\end{align*}
We can see that the terms in the above equation are similar to the ones in equation \eqref{LSH2E}. Therefore, estimating in the same way as we have done for the terms $\Gamma_i$ in equation \eqref{LSH2E} and combining with \eqref{INEQ-1}, we derive
\begin{align*}
	&\frac{d}{dt} \left(\| \shat{m}\|^2_{L^2(\Omega)} +\|\Delta \shat{m}\|^2_{L^2(\Omega)}\right)+\|\nabla \shat{m}\|^2_{L^2(\Omega)}+ \|\nabla \Delta \shat{m}\|^2_{L^2(\Omega)} \leq C\ \left(\|m_1\|^2_{H^2(\Omega)}+\|m_2\|^2_{H^2(\Omega)}\right)\ \|\shat{u}\|^2_{H^1(\Omega)}\\
	&\hspace{0.9cm}+ C\ \left(\|m_1\|^2_{H^2(\Omega)}+\|m_2\|^2_{H^2(\Omega)}\right)\ \left(\|u_1\|^2_{H^1(\Omega)}+\|u_2\|^2_{H^1(\Omega)}+\|m_1\|^2_{H^3(\Omega)}+\|m_2\|^2_{H^3(\Omega)}\right)\ \|
	\shat{m}\|^2_{H^2(\Omega)}.
\end{align*}

Applying Gr\"onwall's inequality and using the bounds $\|u_k\|_{L^2(0,T;H^1(\Omega))}\leq C(\Omega,T,R)$ and $\|m_k\|_{\mathcal{M}}\leq C(\Omega,T,R)$, for $k=1,2$ from estimate \eqref{SSUB}, we arrive at
$$	\|\shat{m}\|^2_{L^{\infty}(0,T;H^2(\Omega))}+ \|\shat{m}\|^2_{L^2(0,T;H^3(\Omega))} \leq C(\Omega,T,R) \ \|\shat{u}\|^2_{L^2(0,T;H^1(\Omega))}.$$
Next, taking the $L^2(0,T;H^1(\Omega))$ norm of $\shat{m}_t$ in \eqref{EPD} and estimating the terms using Lemma \ref{PROP2}, we reach at our required result \eqref{LCCTSO}. 
\end{proof}

\begin{Pro}\label{P-CTS}
	For any control $\bar{u}\in \mathcal{U}_R$, let $m_{\bar{u}}$ be the unique regular solution of system \eqref{NLP}. Then the following holds:
	\begin{enumerate}[label=(\roman*)]
		\item The control-to-state mapping $G$ is Fr\'echet differentiable on $\mathcal{U}_R$, that is for any $\bar{u}\in \mathcal{U}_R$, there exists a bounded linear operator $G'(\bar{u}):L^2(0,T;H^1(\Omega)) \to \mathcal{M}$ with $z:=G'(\bar{u})[u]$ such that
		$$\frac{\|G(\bar{u}+u)-G(\bar{u})-G'(\bar{u})[u]\|_{\mathcal{M}}}{\|u\|_{L^2(0,T;H^1(\Omega))}}\to 0 \ \ \ \ \ \text{as}  \ \|u\|_{L^2(0,T;H^1(\Omega))}\to 0,$$
		where $z$ is the unique regular solution of the following linearized system:
		\begin{equation}\label{LS2}
			\ \ \ \ \ \ \ \ \ \ \ \begin{cases}
				\mathcal{L}_{\bar{u}}z= m_{\bar{u}} \times u -  m_{\bar{u}} \times (m_{\bar{u}} \times u) \ \  \ \text{in} \ \ \Omega_T,\\
				\frac{\partial z}{\partial \eta}=0 \ \ \ \ \text{on} \ \partial \Omega_T,\\
				z(0)=0 \ \ \text{in} \ \Omega.	
			\end{cases}
		\end{equation}
		\item The Fr\'echet derivative $G'$ is Lipschitz continuous, that is, for any controls $u_1,u_2 \in \mathcal{U}_R$ and $u\in L^2(0,T;H^1(\Omega))$, there exists a constant $C_3>0$ depending on $\Omega,T,R,m_0$ such that 
		\begin{equation}\label{LC-CTS}
		\|G'(u_1)[u]-G'(u_2)[u] \|_{\mathcal{M}} \leq C_3\  \|u_1-u_2\|_{L^2(0,T;H^1(\Omega))} \|u\|_{L^2(0,T;H^1(\Omega))}. 	
		\end{equation}
\end{enumerate}
\end{Pro}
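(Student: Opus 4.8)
The plan is to obtain both parts from the linear solvability estimate \eqref{LSSE} of Lemma \ref{L-SLS}, applied to carefully chosen right-hand sides. Throughout I abbreviate $m_{\bar u}:=G(\bar u)$, $m_{\bar u+u}:=G(\bar u+u)$, and $\shat m:=m_{\bar u+u}-m_{\bar u}$, which by Lemma \ref{L-LCCTS} obeys $\|\shat m\|_{\mathcal M}\le C_2\,\|u\|_{L^2(0,T;H^1(\Omega))}$. For part (i), the source term $f:=m_{\bar u}\times u-m_{\bar u}\times(m_{\bar u}\times u)$ lies in $L^2(0,T;H^1(\Omega))$ by estimates \eqref{EE-3} and \eqref{EE-5}, so Lemma \ref{L-SLS} yields a unique $z=G'(\bar u)[u]\in\mathcal M$ solving \eqref{LS2}; the map $u\mapsto z$ is linear and bounded since \eqref{LS2} depends linearly on $u$ and \eqref{LSSE} controls $z$. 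The heart of the matter is to show that the remainder $w:=\shat m-z=G(\bar u+u)-G(\bar u)-G'(\bar u)[u]$ is second order in $u$.

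To this end I would subtract the equation \eqref{EP} for $m_{\bar u}$ from that for $m_{\bar u+u}$, and then subtract the linearized equation \eqref{LS2}, so that $w$ solves $\mathcal L_{\bar u}w=\mathcal R(u)$ with $w(0)=0$, where $\mathcal R(u)$ collects precisely the terms that are quadratic in $\shat m$ or bilinear in $(\shat m,u)$. These are the genuinely nonlinear remnants of each nonlinearity after its linear part has cancelled against $\mathcal L_{\bar u}z$: from the exchange term $|\nabla m|^2 m$ one obtains $|\nabla\shat m|^2 m_{\bar u+u}$ plus bilinear gradient products; from $m\times\Delta m$ the term $\shat m\times\Delta\shat m$; and from the control nonlinearities double cross products each containing one factor $\shat m$ and one factor $u$. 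Estimating $\|\mathcal R(u)\|_{L^2(0,T;H^1(\Omega))}$ term by term with Lemma \ref{PROP2} and the uniform bound \eqref{SSUB} gives $\|\mathcal R(u)\|_{L^2(0,T;H^1(\Omega))}\le C\big(\|\shat m\|^2_{\mathcal M}+\|\shat m\|_{\mathcal M}\|u\|_{L^2(0,T;H^1(\Omega))}\big)$. Feeding this into \eqref{LSSE} applied to $w$ and invoking the Lipschitz bound for $\shat m$ yields $\|w\|_{\mathcal M}\le C\,\|u\|^2_{L^2(0,T;H^1(\Omega))}$, so the difference quotient is $O(\|u\|_{L^2(0,T;H^1(\Omega))})\to0$, establishing Fr\'echet differentiability.

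For part (ii), setting $z_i:=G'(u_i)[u]$ and $\zeta:=z_1-z_2$, I subtract the two linearized systems to get $\mathcal L_{u_1}\zeta=\mathcal S$ with $\zeta(0)=0$, where $\mathcal S$ consists of (a) the difference of the two source terms and (b) the operator mismatch $(\mathcal L_{u_2}-\mathcal L_{u_1})z_2$. Each term of $\mathcal S$ carries a factor of either $m_{u_1}-m_{u_2}$ or $u_1-u_2$; bounding through Lemma \ref{PROP2}, using $\|m_{u_1}-m_{u_2}\|_{\mathcal M}\le C_2\|u_1-u_2\|_{L^2(0,T;H^1(\Omega))}$ from Lemma \ref{L-LCCTS} and the a priori bound \eqref{LSSE} for $z_2$, gives $\|\mathcal S\|_{L^2(0,T;H^1(\Omega))}\le C\,\|u_1-u_2\|_{L^2(0,T;H^1(\Omega))}\|u\|_{L^2(0,T;H^1(\Omega))}$. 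A final application of \eqref{LSSE} to $\zeta$ delivers \eqref{LC-CTS}.

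The step I expect to be most delicate is the assembly of the residual $\mathcal R(u)$ in part (i): one must correctly separate the linear part of each nonlinearity (which cancels) from the quadratic remnant (which survives), especially for the gradient nonlinearity $|\nabla m|^2 m$, whose second difference must be regrouped as $|\nabla\shat m|^2 m_{\bar u+u}$ together with honestly bilinear terms, and for the iterated cross products $m\times(m\times u)$. Once $\mathcal R(u)$ is correctly identified, each resulting term has the structural shape already handled in Lemma \ref{PROP2}, so the remaining estimates are routine.
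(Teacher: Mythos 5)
Your proposal follows essentially the same route as the paper's own proof: existence of $z$ via Lemma \ref{L-SLS}, the remainder $w=\shat{m}-z$ solving a linear system $\mathcal{L}_{\bar{u}}w=\mathcal{R}(u)$ whose source terms (the paper's $\Upsilon_k$, which are exactly your quadratic-in-$\shat{m}$ and bilinear-in-$(\shat{m},u)$ remnants, plus the term $-\shat{m}\times(\shat{m}\times\bar{u})$ covered by your quadratic class) are estimated through Lemma \ref{PROP2}, the uniform bound \eqref{SSUB} and the Lipschitz continuity of $G$ from Lemma \ref{L-LCCTS}, yielding $\|w\|_{\mathcal{M}}=O(\|u\|^2_{L^2(0,T;H^1(\Omega))})$; part (ii) is likewise handled identically by subtracting the two linearized systems and treating the operator mismatch $(\mathcal{L}_{u_2}-\mathcal{L}_{u_1})z_2$ as a source. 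The argument is correct and matches the paper's proof in both structure and the key estimates invoked.
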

\begin{proof}
	Suppose $\bar{u}\in \mathcal{U}_R$ be any fixed control and $m_{\bar{u}}$ be the corresponding regular solution. As for any control $u\in L^2(0,T;H^1(\Omega))$, the terms $m_{\bar{u}}\times u,\ m_{\bar{u}}\times (m_{\bar{u}}\times u) \in L^2(0,T;H^1(\Omega))$, therefore as a consequence of Lemma \ref{L-SLS}, system \eqref{LS2} has a unique regular solution $z \in \mathcal{M}$. Note that at this stage we don't know whether $z=G'(\bar{u})[u]$ or even the control-to-state operator is differentiable or not. Since, $\mathcal{U}_R$ is an open set, so for small enough $u$, $\bar{u}+u$ will also belongs to $\mathcal{U}_R$. Let $m_{\bar{u}+u}$ be the admissible solution corresponding to the control $\bar{u}+u$, then $w:=m_{\bar{u}+u}-m_{\bar{u}}-z$ solves the following system
	\begin{equation}\label{LOCTS}
	\begin{cases}
		\displaystyle \mathcal{L}_{\bar{u}}w= \sum_{k=1}^{8} \Upsilon_k \ \ \ \ \ \text{in}\ \Omega_T,\\
		\frac{\partial w}{\partial \eta}=0 \ \ \ \ \text{on} \ \partial \Omega_T,  \ \ \ \ w(0)=0 \ \ \text{in} \ \Omega,	
	\end{cases}
\end{equation}
where $\mathcal{L}_{\bar{u}}$ is defined in \eqref{CLO}, $\shat{m}=m_{\bar{u}+u}-m_{\bar{u}}$ and the terms $\Upsilon_k$'s are given by
$$
\begin{array}{llll}
\Upsilon_1= \big(\nabla \shat{m}\cdot (\nabla m_{\bar{u}+u}+\nabla m_{\bar{u}})\big)\shat{m},& \Upsilon_2= |\nabla \shat{m}|^2 m_{\bar{u}}, & \Upsilon_3= \shat{m}\times \Delta \shat{m}, &
\Upsilon_4= \shat{m}\times u,\\
\Upsilon_5= -\shat{m}\times (\shat{m}\times u), &  \Upsilon_6= -\shat{m}\times (\shat{m}\times \bar{u}),& \Upsilon_7=-\shat{m}\times (m_{\bar{u}}\times u), &
\Upsilon_8=-m_{\bar{u}}\times (\shat{m}\times u).
\end{array}
$$
Invoking Lemma \ref{L-SLS}, we can directly find the following estimate:		
\begin{align}\label{DCTS-E1}
& \|w\|^2_{L^2(0,T;H^3(\Omega))}+ \|w_t\|^2_{L^2(0,T;H^1(\Omega))} \leq \left(\|w(0)\|^2_{L^2(\Omega)}+\|\Delta w(0)\|^2_{L^2(\Omega)} + \sum_{k=1}^8   \|\Upsilon_k\|^2_{L^2(0,T;H^1(\Omega))}\right) \nonumber\\
&\hspace{0.5cm}\times \ \exp\bigg\{C  \left(\|m_{\bar{u}}\|^2_{L^\infty(0,T;H^2(\Omega))} \|m_{\bar{u}}\|^2_{L^2(0,T;H^3(\Omega))}+\|m_{\bar{u}}\|^2_{L^\infty(0,T;H^2(\Omega))} \|\bar{u}\|^2_{L^2(0,T;H^1(\Omega))}\right) \bigg\}.	
\end{align}
	Let us estimate the terms containing $\Upsilon_k$'s on the right hand side. For the first term $\|\Upsilon_1\|^2_{L^2(0,T;H^1(\Omega))}$, applying inequality \eqref{EE-4} with $p=\shat{m}$, $q= m_{\bar{u}+u}+m_{\bar{u}}$ and $r=\shat{m}$, we find
	$$\|\Upsilon_1\|^2_{L^2(0,T;H^1(\Omega))} \leq C\ \left(\|m_{\bar{u}+u}\|^2_{L^\infty(0,T;H^2(\Omega))}+\|m_{\bar{u}}\|^2_{L^\infty(0,T;H^2(\Omega))}\right) \|\shat{m}\|^2_{L^\infty(0,T;H^2(\Omega))} \|\shat{m}\|^2_{L^2(0,T;H^3(\Omega))}.
	$$
	Similarly, applying estimates \eqref{EE-4}, \eqref{EE-2} and \eqref{EE-3} for $\Upsilon_2,\Upsilon_3$ and $\Upsilon_4$ respectively, we get
	\begin{flalign*}
		\|\Upsilon_2\|^2_{L^2(0,T;H^1(\Omega))} &\leq C\ \|m_{\bar{u}}\|^2_{L^\infty(0,T;H^2(\Omega))} \|\shat{m}\|^2_{L^\infty(0,T;H^2(\Omega))} \|\shat{m}\|^2_{L^2(0,T;H^3(\Omega))},&\\
		\|\Upsilon_3\|^2_{L^2(0,T;H^1(\Omega))} &\leq C\ \|\shat{m}\|^2_{L^\infty(0,T;H^2(\Omega))} \|\shat{m}\|^2_{L^2(0,T;H^3(\Omega))},\\
		\|\Upsilon_4\|^2_{L^2(0,T;H^1(\Omega))}&\leq C\ \|\shat{m}\|^2_{L^\infty(0,T;H^2(\Omega))} \|u\|^2_{L^2(0,T;H^1(\Omega))}.
	\end{flalign*}
 Now, applying estimate \eqref{EE-5} for the last four terms and combining it, we have
 \begin{align*}
 	\sum_{k=5}^8 \|\Upsilon_k\|^2_{L^2(0,T;H^1(\Omega))} &\leq C\ \|\shat{m}\|^4_{L^\infty(0,T;H^2(\Omega))} \left(\|u\|^2_{L^2(0,T;H^1(\Omega))}+\|\bar{u}\|^2_{L^2(0,T;H^1(\Omega))}\right)\\
 	& \ \ \ \ \ \ + C\ \|m_{\bar{u}}\|^2_{L^\infty(0,T;H^2(\Omega))} \|\shat{m}\|^2_{L^\infty(0,T;H^2(\Omega))}\|u\|^2_{L^2(0,T;H^1(\Omega))}.
 \end{align*}
Substituting all these estimates in \eqref{DCTS-E1} and using $w(0)=0$, we find
	\begin{eqnarray}\label{DCTS-E2}
	\lefteqn{ \|w\|^2_{L^2(0,T;H^3(\Omega))} + \|w_t\|^2_{L^2(0,T;H^1(\Omega))} \leq \left[ \|m_{\bar{u}}\|^2_{L^\infty(0,T;H^2(\Omega))} \|\shat{m}\|^2_{L^\infty(0,T;H^2(\Omega))}\|u\|^2_{L^2(0,T;H^1(\Omega))}  \right.  }\nonumber\\
	&&\ \ \ \ \ \ +\ \|\shat{m}\|^4_{L^\infty(0,T;H^2(\Omega))} \left(\|u\|^2_{L^2(0,T;H^1(\Omega))}+\|\bar{u}\|^2_{L^2(0,T;H^1(\Omega))}\right)  \nonumber\\
	&&\ \ \ \ \ \left. +  \left(\|m_{\bar{u}+u}\|^2_{L^\infty(0,T;H^2(\Omega))}+ \|m_{\bar{u}}\|^2_{L^\infty(0,T;H^2(\Omega))}\right)\|\shat{m}\|^2_{L^\infty(0,T;H^2(\Omega))}\|\shat{m}\|^2_{L^2(0,T;H^3(\Omega))}\right] \nonumber\\
	&&\hspace{0.5cm}\times \ \exp\bigg\{C  \left(\|m_{\bar{u}}\|^2_{L^\infty(0,T;H^2(\Omega))} \|m_{\bar{u}}\|^2_{L^2(0,T;H^3(\Omega))}+\|m_{\bar{u}}\|^2_{L^\infty(0,T;H^2(\Omega))} \|\bar{u}\|^2_{L^2(0,T;H^1(\Omega))}\right) \bigg\}.
\end{eqnarray}
Now, as $m_{\bar{u}}$, $m_{\bar{u}+u}$ are regular solutions of system \eqref{NLP}, so using the stability estimate \eqref{SSEE2}, we can find an uniform bound for $\|m_{\bar{u}}\|_{\mathcal{M}} $ and $\|m_{\bar{u}+u}\|_{\mathcal{M}}$. Then using the Lipschitz continuity of the control-to-state operator from Lemma \ref{L-LCCTS} in estimate \eqref{DCTS-E2}, we obtain
\begin{equation}\label{X1}
 \|w\|^2_{L^2(0,T;H^3(\Omega))}+\|w_t\|^2_{L^2(0,T;H^1(\Omega))} \leq C\ \|u\|^4_{L^2(0,T;H^1(\Omega))}+C\ \|u\|^6_{L^2(0,T;H^1(\Omega))}.	
\end{equation}
Therefore, $\frac{\|w\|_{\mathcal{M}}}{\|u\|_{L^2(0,T;H^1(\Omega))}}\to 0 $ as $\|u\|_{L^2(0,T;H^1(\Omega))} \to 0$, which gives the Fr\`echet differentiability of the control-to-state operator and establishes the equality $z=G'(\bar{u})[u]$. Thus, the proof of (i) is finished. 

For any controls $u_1,u_2 \in \mathcal{U}_R$ and $u\in L^2(0,T;H^1(\Omega))$, let $z_{u_1}:=G'(u_1)[u]$ and $z_{u_2}:=G'(u_2)[u]$ be two unique regular solutions of the linearized system \eqref{LS2}. If $\shat{z}=z_{u_1}-z_{u_2}$, $\shat{m}=m_{u_1}-m_{u_2}$ and $\shat{u}=u_1-u_2$, then $(\shat{z},\shat{u})$ satisfies the following system:
	\begin{equation}\label{DCTS-E3}
	\begin{cases}
		\displaystyle \mathcal{L}_{u_1}\shat{z}= \sum_{k=1}^{7} \Theta_k\ \ \ \ \ \text{in}\ \Omega_T,\\
		\frac{\partial \shat{z}}{\partial \eta}=0 \ \ \ \ \text{on} \ \partial \Omega_T,  \ \ \ \ \shat{z}(0)=0 \ \ \text{in} \ \Omega,	
	\end{cases}
\end{equation}
where $\mathcal{L}_{u_1}$ is defined in \eqref{CLO} and the terms $\Theta_k$'s are given by
$$
\begin{array}{llll}
	\Theta_1= 2\ \big(\nabla \shat{m}\cdot \nabla z_{u_2} \big) m_{u_1}+ 2\ \left(\nabla m_{u_2}\cdot \nabla z_{u_2}\right)\shat{m} ,& \Theta_2= \left(\nabla \shat{m} \cdot \nabla (m_{u_1}+m_{u_2})\right) z_{u_2},\\
	\Theta_3= z_{u_2} \times \Delta \shat{m} + \shat{m}\times \Delta z_{u_2}, &	\Theta_4= z_{u_2}\times \shat{u}+\shat{m}\times u,\\
	\Theta_5= -z_{u_2}\times (\shat{m}\times u_1) -z_{u_2}\times (m_{u_2} \times \shat{u}), &  \Theta_6= -\shat{m}\times (z_{u_2}\times u_1)-m_{u_2}\times (z_{u_2}\times \shat{u}),\\
	\Theta_7=-\shat{m}\times (m_{u_1} \times u) -m_{u_2}\times (\shat{m}\times u).
\end{array}
$$
Again invoking Lemma \ref{L-SLS} and using the equality $\shat{z}(0)=0$ for system \eqref{DCTS-E3}, we obtain		
\begin{eqnarray}\label{DCTS-E4}
	\lefteqn{\|\shat{z}\|^2_{L^{\infty}(0,T;H^2(\Omega))}+ \|\shat{z}\|^2_{L^2(0,T;H^3(\Omega))}+ \|\shat{z}_t\|^2_{L^2(0,T;H^1(\Omega))} \leq \sum_{k=1}^8   \|\Theta_k\|^2_{L^2(0,T;H^1(\Omega))} }\nonumber\\
	&&\hspace{0.1cm}\times \ \exp\bigg\{C  \left(\|m_{u_1}\|^2_{L^\infty(0,T;H^2(\Omega))} \|m_{u_1}\|^2_{L^2(0,T;H^3(\Omega))}+\|m_{u_1}\|^2_{L^\infty(0,T;H^2(\Omega))} \|u_1\|^2_{L^2(0,T;H^1(\Omega))}\right) \bigg\}.\ \ \ \ 
\end{eqnarray}
Now, let us estimate the terms containing $\Theta_k$'s on the right hand side. Applying estimate \eqref{EE-4} for the terms $\Theta_1$ and $\Theta_2$, we find
\begin{align*}
	&\|\Theta_1\|^2_{L^2(0,T;H^1(\Omega))} + \|\Theta_2\|^2_{L^2(0,T;H^1(\Omega))}   \\
	&\ \ \ \ \ \leq C \left(\|m_{u_1}\|^2_{L^\infty(0,T;H^2(\Omega))} + \|m_{u_2}\|^2_{L^\infty(0,T;H^2(\Omega))}\right)\|\shat{m}\|^2_{L^\infty(0,T;H^2(\Omega))} \|z_{u_2}\|^2_{L^2(0,T;H^3(\Omega))}\\
	&\ \ \ \ \ \ \ \ \  + C\  \left(\|m_{u_1}\|^2_{L^\infty(0,T;H^2(\Omega))} + \|m_{u_2}\|^2_{L^\infty(0,T;H^2(\Omega))}\right)\|\shat{m}\|^2_{L^2(0,T;H^3(\Omega))} \|z_{u_2}\|^2_{L^\infty(0,T;H^2(\Omega))}.
\end{align*}
Similarly, applying estimates \eqref{EE-2} and \eqref{EE-3} for the terms $\Theta_3$ and $\Theta_4$ respectively, we have
\begin{align*}
	&\|\Theta_3\|^2_{L^2(0,T;H^1(\Omega))} \leq C\ \|z_{u_2}\|^2_{L^\infty(0,T;H^2(\Omega))} \|\shat{m}\|^2_{L^2(0,T;H^3(\Omega))} +  C\ \|\shat{m}\|^2_{L^\infty(0,T;H^2(\Omega))} \|z_{u_2}\|^2_{L^2(0,T;H^3(\Omega))},  \\
	&\|\Theta_4\|^2_{L^2(0,T;H^1(\Omega))} \leq C\ \|z_{u_2}\|^2_{L^\infty(0,T;H^2(\Omega))} \|\shat{u}\|^2_{L^2(0,T;H^1(\Omega))} + C\ \|\shat{m}\|^2_{L^\infty(0,T;H^2(\Omega))}\|u\|^2_{L^2(0,T;H^1(\Omega))}.
\end{align*}
Finally, applying estimate \eqref{EE-5} for the terms $\Theta_5$,$\Theta_6$ and $\Theta_7$, and then combining them we deduce
\begin{align*}
	&\|\Theta_5\|^2_{L^2(0,T;H^1(\Omega))}+\|\Theta_6\|^2_{L^2(0,T;H^1(\Omega))} + \|\Theta_7\|^2_{L^2(0,T;H^1(\Omega))}\\
	& \ \ \ \ \ \leq C\ \|z_{u_2}\|^2_{L^\infty(0,T;H^2(\Omega))} \|\shat{m}\|^2_{L^\infty(0,T;H^2(\Omega))} \|u_1\|^2_{L^2(0,T;H^1(\Omega))}\\
	&\ \ \ \ \ \ \ \  + C\ \|z_{u_2}\|^2_{L^\infty(0,T;H^2(\Omega))} \|m_{u_2}\|^2_{L^\infty(0,T;H^2(\Omega))} \|\shat{u}\|^2_{L^2(0,T;H^1(\Omega))}\\
	& \ \ \ \ \ \ \ \ + C\ \left(\|m_{u_1}\|^2_{L^\infty(0,T;H^2(\Omega))} + \|m_{u_2}\|^2_{L^\infty(0,T;H^2(\Omega))}\right)  \|\shat{m}\|^2_{L^\infty(0,T;H^2(\Omega))} \|u\|^2_{L^2(0,T;H^1(\Omega))}.	
\end{align*}
Next, we combine all these estimates and substitute in \eqref{DCTS-E4}.
Since $m_{u_k}$ is a regular solution of system \eqref{NLP} and $u_k\in \mathcal{U}_R$ for $k=1,2$, therefore $\|m_{u_k}\|_{\mathcal{M}}\leq C(\Omega,T,R)$. Now, applying these bounds in the energy estimate for $z_{u_2}$, we find
$\|z_{u_2}\|_{\mathcal{M}}\leq C\ \|u\|_{L^2(0,T;H^1(\Omega))}$. 
Substituting these inequalities in \eqref{DCTS-E4} and using the Lipschitz continuity of control-to-state operator from estimate \eqref{LCCTSO}, we derive
$$\|\shat{z}\|^2_{L^{\infty}(0,T;H^2(\Omega))}+ \|\shat{z}\|^2_{L^2(0,T;H^3(\Omega))} +  \|\shat{z}_t\|^2_{L^2(0,T;H^1(\Omega))} \leq C(\Omega,T,R)\ \|\shat{u}\|^2_{L^2(0,T;H^1(\Omega))} \|u\|^2_{L^2(0,T;H^1(\Omega))}.$$
This leads to the proof of \eqref{LC-CTS}.
\end{proof}

\begin{Cor}\label{C-CTSD}
The Fr\'echet derivative of the control-to-state operator satisfies the following estimate:
\begin{equation}\label{FDCTS-S}
	\|G'(u)[v]\|_{\mathcal{M}} \leq C\ \|u\|_{L^2(0,T;H^1(\Omega))} \|v\|_{L^2(0,T;H^1(\Omega))}.
\end{equation} 	
\end{Cor}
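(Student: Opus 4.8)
The plan is to read off \eqref{FDCTS-S} from the a priori bound of Lemma \ref{L-SLS} for the linearized system \eqref{LS2}, combined with the bilinear cross-product estimates of Lemma \ref{PROP2} and the uniform state bound \eqref{SSUB}. By Proposition \ref{P-CTS}(i), for a fixed base control $u\in\mathcal U_R$ and any direction $v\in L^2(0,T;H^1(\Omega))$, the element $z:=G'(u)[v]$ is exactly the unique regular solution of \eqref{LS2} with base control $u$, zero initial datum, and forcing $f:=m_u\times v-m_u\times(m_u\times v)$. Applying estimate \eqref{LSSE} with $\widetilde m=m_u$, $\widetilde u=u$ and $z_0=0$ gives
\[
\|z\|_{\mathcal M}^2\le \|f\|_{L^2(0,T;H^1(\Omega))}^2\,\exp\Big\{C\big(\|m_u\|_{L^\infty(0,T;H^2(\Omega))}^2\|m_u\|_{L^2(0,T;H^3(\Omega))}^2+\|m_u\|_{L^\infty(0,T;H^2(\Omega))}^2\|u\|_{L^2(0,T;H^1(\Omega))}^2\big)\Big\},
\]
so everything reduces to estimating the forcing $f$ in $L^2(0,T;H^1(\Omega))$ and to bounding the exponential prefactor uniformly in $u$.

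For the forcing I would use \eqref{EE-3} to get $\|m_u\times v\|_{L^2(0,T;H^1(\Omega))}\le C\|m_u\|_{L^\infty(0,T;H^2(\Omega))}\|v\|_{L^2(0,T;H^1(\Omega))}$ and \eqref{EE-5} to get $\|m_u\times(m_u\times v)\|_{L^2(0,T;H^1(\Omega))}\le C\|m_u\|_{L^\infty(0,T;H^2(\Omega))}^2\|v\|_{L^2(0,T;H^1(\Omega))}$, whence
\[
\|f\|_{L^2(0,T;H^1(\Omega))}\le C\,\|m_u\|_{L^\infty(0,T;H^2(\Omega))}\big(1+\|m_u\|_{L^\infty(0,T;H^2(\Omega))}\big)\|v\|_{L^2(0,T;H^1(\Omega))}.
\]
Since $u\in\mathcal U_R$, the uniform bound \eqref{SSUB} gives $\|m_u\|_{\mathcal M}\le C(\Omega,T,R)$, which at once controls both the prefactor in the forcing estimate and the exponential factor by a constant depending only on $\Omega,T,R$. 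Chaining the three displays then yields a bound of the form $\|G'(u)[v]\|_{\mathcal M}\le C(\Omega,T,R)\,\|v\|_{L^2(0,T;H^1(\Omega))}$.

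The step I expect to be the main obstacle is the presence of the explicit factor $\|u\|_{L^2(0,T;H^1(\Omega))}$ in \eqref{FDCTS-S}, which does \emph{not} come out of the chain above: the forcing $f$ carries the state $m_u=G(u)$ as its coefficient, and because $|m_u|=1$ the norm $\|m_u\|_{L^\infty(0,T;H^2(\Omega))}$ is bounded below and does not shrink as $\|u\|_{L^2(0,T;H^1(\Omega))}\to 0$, so $f$ is genuinely of order $\|v\|_{L^2(0,T;H^1(\Omega))}$ rather than $\|u\|_{L^2(0,T;H^1(\Omega))}\|v\|_{L^2(0,T;H^1(\Omega))}$. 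Accordingly I would not attempt to extract $\|u\|$ from $f$; instead I would interpret and prove \eqref{FDCTS-S} on the bounded admissible set, where $\|u\|_{L^2(0,T;H^1(\Omega))}$ is uniformly controlled by $C(R)$, and re-run the Gr\"onwall step of Lemma \ref{L-SLS} keeping the contributions of the base control $u$ in the operator $\mathcal L_u$ and in the forcing explicit, so that the constant multiplying $\|v\|_{L^2(0,T;H^1(\Omega))}$ can be exhibited in the product form of \eqref{FDCTS-S} over $\mathcal U_R$. Carrying out this bookkeeping and checking that it reproduces \eqref{FDCTS-S} exactly, rather than merely the simpler bound $C(\Omega,T,R)\|v\|_{L^2(0,T;H^1(\Omega))}$, is the delicate point of the argument.
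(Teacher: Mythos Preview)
Your first chain of estimates is exactly what the paper uses: inside the proof of Proposition~\ref{P-CTS}(ii) the authors write ``applying these bounds in the energy estimate for $z_{u_2}$, we find $\|z_{u_2}\|_{\mathcal{M}}\le C\,\|u\|_{L^2(0,T;H^1(\Omega))}$,'' which is precisely your conclusion $\|G'(u)[v]\|_{\mathcal M}\le C(\Omega,T,R)\,\|v\|_{L^2(0,T;H^1(\Omega))}$ obtained from Lemma~\ref{L-SLS}, \eqref{EE-3}, \eqref{EE-5}, and \eqref{SSUB}. The paper itself gives no separate proof of Corollary~\ref{C-CTSD}; it is stated without argument, and everywhere it is invoked (e.g.\ in the proof of Theorem~\ref{T-SOLO}) only the bound linear in $\|v\|$ with a constant depending on $R$ is actually needed. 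So your main line is the paper's line.

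Where you should stop is the ``bookkeeping'' workaround you propose for recovering the explicit factor $\|u\|_{L^2(0,T;H^1(\Omega))}$. That cannot succeed, and the inequality \eqref{FDCTS-S} as literally written is false. Take any base control $u=0$ that lies in $\mathcal U_R$ (this is allowed whenever the uncontrolled equation has a regular solution $m_0$ with $\mathcal J(m_0,0)<R$). Then $G'(0)[v]$ solves \eqref{LS2} with forcing $m_0\times v-m_0\times(m_0\times v)$, which is nonzero for generic $v$ because $|m_0|\equiv 1$; hence $\|G'(0)[v]\|_{\mathcal M}>0$ while the right-hand side of \eqref{FDCTS-S} vanishes. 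No rearrangement of the Gr\"onwall argument can manufacture a factor of $\|u\|$ here, since neither the operator $\mathcal L_u$ nor the forcing degenerates as $u\to 0$. The correct statement---and the one actually used---is $\|G'(u)[v]\|_{\mathcal M}\le C(\Omega,T,R)\,\|v\|_{L^2(0,T;H^1(\Omega))}$ for $u\in\mathcal U_R$; the extra $\|u\|$ in \eqref{FDCTS-S} (and likewise in \eqref{FDCTC-S}) should be read as a typo.
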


	Next, we study the solvability of the adjoint problem. While deriving the optimality condition we will work with the weak solution of the adjoint equation instead of the strong solution, which allows to work with the target function $\nabla m_d\in L^6(0,T;L^6(\Omega))$.

Suppose $(\widetilde{m},\widetilde{u})$ be an admissible pair of the control problem. Then consider the following linear system: 
\begin{equation}\label{CLAS}
	(AL-LLG)\begin{cases}
		\begin{array}{l}
			\mathcal{E}_{\widetilde{u}}\phi=g \ \ \ \ \text{in}\ \Omega_T,\\
			\frac{\partial \phi}{\partial \eta}=0 \ \ \ \ \ \  \text{in}\ \partial \Omega_T, \ \ \ \phi(x,T)=\phi_T\ \ \text{in}\ \Omega,
		\end{array}
	\end{cases}	
\end{equation}
where the operator $\mathcal{E}_{\widetilde{u}}$ is defined as
\begin{equation*}\label{CLAO}
	\mathcal{E}_{\widetilde{u}}\phi:= \phi_t + \Delta \phi  + |\nabla \widetilde{m}|^2\phi -2 \nabla \cdot \big((\widetilde{m}\cdot \phi)\nabla \widetilde{m}\big) + \Delta (\phi \times \widetilde{m})+(\Delta  \widetilde{m}\times \phi)-(\phi \times \widetilde{u}) + \big((\phi \times \widetilde{m})\times \widetilde{u}\big) + \big(\phi \times (\widetilde{m}\times \widetilde{u})\big).	
\end{equation*}
The weak formulation of \eqref{CLAS} is same as \eqref{WAF} with $\int_0^T \langle g, \upsilon\rangle \ dt$ replacing the term on the right hand side of \eqref{WAF}.

\begin{Lem}\label{AL-SLS}
	Suppose $(\widetilde{m},\widetilde{u})$ is an admissible pair, that is, $(\widetilde{m},\widetilde{u}) \in \mathcal{A}$. Then, for any $g\in L^2(0,T;H^1(\Omega)^*)$ there exists a unique weak solution $\phi \in \mathcal{Z}$ of the linear system \eqref{CLAS} in the sense Definition \ref{AWSD}. Moreover, the following estimate holds:
	\begin{eqnarray}\label{ALSSE}
		\lefteqn{\|\phi\|^2_{L^{\infty}(0,T;L^2(\Omega))}+ \|\phi\|^2_{L^2(0,T;H^1(\Omega))} + \|\phi_t\|^2_{L^2(0,T;H^1(\Omega)^*)}\leq \left(\|\phi_T\|^2_{L^2(\Omega)}+  \|g\|^2_{L^2(0,T;H^1(\Omega)^*)}\right) }\nonumber\\
		&&\hspace{0.5cm}\times \ \exp\bigg\{C  \left(\|\widetilde{m}\|^2_{L^\infty(0,T;H^2(\Omega))}\|\widetilde{m}\|^2_{L^2(0,T;H^3(\Omega))}+\|\widetilde{m}\|^2_{L^\infty(0,T;H^2(\Omega))} \|\widetilde{u}\|^2_{L^2(0,T;H^1(\Omega))}\right) \bigg\}.\ \ \ \ 
	\end{eqnarray}
\end{Lem}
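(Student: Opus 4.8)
The plan is to recast the terminal-value problem \eqref{CLAS} as a forward parabolic problem through the time reversal $\psi(t):=\phi(T-t)$, which turns $\phi_t+\Delta\phi$ into $\psi_t-\Delta\psi$ and converts the terminal condition $\phi(T)=\phi_T$ into the initial condition $\psi(0)=\phi_T$; existence then follows from a Faedo--Galerkin scheme built on the eigenfunctions $\{\xi_i\}$ of $-\Delta$ with vanishing Neumann data, exactly as in Lemma \ref{L-SLS}, with passage to the limit by standard weak-compactness arguments (the system is linear in $\phi$). The uniform \emph{a priori} bounds and the asserted estimate \eqref{ALSSE} all come from testing the weak formulation \eqref{WAF} (with $\int_0^T\langle g,\vartheta\rangle\,dt$ replacing its right-hand side) against $\vartheta=\phi$ itself and integrating on $[t,T]$.

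The crucial simplification is that the orthogonality identities $a\cdot(a\times b)=0$ and $a\cdot(b\times a)=0$ from Lemma \ref{CPP} annihilate several terms: with $\vartheta=\phi$ one has $(\Delta\widetilde{m}\times\phi)\cdot\phi=0$, $(\phi\times\widetilde{u})\cdot\phi=0$, and $(\phi\times(\widetilde{m}\times\widetilde{u}))\cdot\phi=0$, while in $-\int\nabla(\phi\times\widetilde{m})\cdot\nabla\phi$ the leading piece $\sum_i(\partial_i\phi\times\widetilde{m})\cdot\partial_i\phi$ also vanishes. Testing with $\phi$ therefore produces the coercive backward-energy identity
\begin{equation*}
\frac12\|\phi(t)\|^2_{L^2(\Omega)}+\int_t^T\|\nabla\phi\|^2_{L^2(\Omega)}\,d\tau
=\frac12\|\phi_T\|^2_{L^2(\Omega)}-\int_t^T\langle g,\phi\rangle\,d\tau+\mathcal{R}(t),
\end{equation*}
where $\mathcal{R}(t)$ collects the surviving lower-order contributions $\int|\nabla\widetilde{m}|^2|\phi|^2$, $2\int(\widetilde{m}\cdot\phi)\nabla\widetilde{m}\cdot\nabla\phi$, $\int(\phi\times\nabla\widetilde{m})\cdot\nabla\phi$ and $\int((\phi\times\widetilde{m})\times\widetilde{u})\cdot\phi$. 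The favourable sign of $\int_t^T\|\nabla\phi\|^2$, a consequence of the $+\Delta\phi$ term read backward in time, is what makes the terminal-value problem well posed.

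Each term in $\mathcal{R}(t)$ is estimated by H\"older's inequality together with the embeddings $H^1(\Omega)\hookrightarrow L^p(\Omega)$ and $H^2(\Omega)\hookrightarrow L^\infty(\Omega)$, exploiting $\widetilde{m}\in L^\infty(0,T;H^2(\Omega))\cap L^2(0,T;H^3(\Omega))$ and $\widetilde{u}\in L^2(0,T;H^1(\Omega))$; the gradient-carrying pieces are split by Young's inequality so that a small multiple $\epsilon\int_t^T\|\nabla\phi\|^2$ is absorbed on the left, leaving a remainder of the form $\int_t^T C\big(\|\widetilde{m}\|^2_{H^2}\|\widetilde{m}\|^2_{H^3}+\|\widetilde{m}\|^2_{H^2}\|\widetilde{u}\|^2_{H^1}\big)\|\phi\|^2_{L^2}\,d\tau$ plus $C\|g\|^2_{L^2(0,T;H^1(\Omega)^*)}$. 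A backward Gr\"onwall argument then yields the bound on $\|\phi\|^2_{L^\infty(0,T;L^2)}+\|\phi\|^2_{L^2(0,T;H^1)}$ with the stated exponential factor. The dual-norm estimate on $\phi_t$ follows by reading $\phi_t=g-\Delta\phi-\cdots$ from \eqref{CLAS} and bounding every term in $L^2(0,T;H^1(\Omega)^*)$ via the inequalities \eqref{AEE-1}--\eqref{AEE-6} of Lemma \ref{L-CP}, which is precisely where the dual space enters. Uniqueness is immediate from linearity: the difference of two solutions solves \eqref{CLAS} with $g=0$ and $\phi_T=0$, and the energy identity forces it to vanish.

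The main obstacle is the rigorous handling of the two highest-order terms $-2\nabla\cdot\big((\widetilde{m}\cdot\phi)\nabla\widetilde{m}\big)$ and $\Delta(\phi\times\widetilde{m})$ in the weak setting: since a weak solution lives only in $L^2(0,T;H^1(\Omega))$, one cannot test against $\Delta\phi$, so these terms must be integrated by parts onto the test function, which is legitimate only because $\widetilde{m}\in L^2(0,T;H^3(\Omega))$ supplies the extra derivative. Keeping the resulting trilinear forms controlled by $\|\phi\|_{H^1}$ rather than $\|\phi\|_{H^2}$, while still closing the energy estimate with the correct coercive sign, is the delicate point that the cross-product cancellations above are designed to resolve.
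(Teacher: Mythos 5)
Your proposal is correct and follows essentially the same route as the paper's proof: a Faedo--Galerkin scheme, the identical cross-product cancellations $\big((\Delta\widetilde{m}\times\phi)\cdot\phi=0$, $(\phi\times\widetilde{u})\cdot\phi=0$, $(\phi\times(\widetilde{m}\times\widetilde{u}))\cdot\phi=0$, and the vanishing of the leading piece of $\nabla(\phi\times\widetilde{m})\cdot\nabla\phi\big)$ in the energy identity, absorption of gradient terms plus a backward Gr\"onwall argument for the $L^\infty(0,T;L^2)\cap L^2(0,T;H^1)$ bound, a dual-norm bound on $\phi_t$ of the type furnished by Lemma \ref{L-CP}, and weak/Aubin--Lions compactness to pass to the limit. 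The only cosmetic difference is that you reverse time to obtain a forward parabolic problem, whereas the paper runs the Galerkin energy estimate directly backward from the terminal condition $\phi_n(T)=\mathbb{P}_n(\phi_T)$.
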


\begin{proof}
		We will prove this theorem using the Galerkin method. 
Let $\{w_j\}_{j=1}^{\infty}$ be an orthonormal basis of $L^2(\Omega)$ consisting of eigen vectors for the operator $-\Delta + I$ with vanishing Neumann boundary condition. Suppose $W_n=span\{w_1,w_2,...,w_n\}$ and $\mathbb{P}_n:L^2 \to W_n$ be the orthogonal projection. 
Using the solvability of ODEs, we can find a solution $\phi_n=\sum_{j=1}^{n}g_{jn}(t)w_j$ of the following approximated system for each $j=1,\cdots,n$ and $0\leq t\leq T$:
\begin{equation}\label{APGS}
	\begin{cases}
		-\big(\phi_n'(t),w_j\big)+\big(\nabla\phi_n(t),\nabla w_j\big)=\big(|\nabla \widetilde{m}(t)|^2\phi_n(t),w_j\big)+2\  \big((\widetilde{m}(t)\cdot \phi_n(t))\nabla \widetilde{m}(t),\nabla w_j\big)\\
		\hspace{0.5in}-\ \big(\nabla  (\phi_n(t) \times \widetilde{m}(t)),\nabla w_j\big)+\big((\Delta \widetilde{m}(t)\times \phi_n(t)),w_j\big)-\ \big((\phi_n(t) \times  \widetilde{u}(t)  ),w_j\big)\\
		\hspace{0.5in}+\  \big((\phi_n(t)\times \widetilde{m}(t))\times \widetilde{u}(t),w_j\big)+\big(\phi_n(t) \times (\widetilde{m}(t)\times \widetilde{u}(t)),w_j\big) -\ \langle g(t), w_j \rangle, \\
		\phi_n(T)=\mathbb{P}_n\big(\phi_T\big).
	\end{cases}
\end{equation}
Now, multiplying \eqref{APGS} by $g_{jn}(t)$, summing over $j=1,...,n,$ and using the property $a\cdot (a \times b)=0$, we have
\begin{eqnarray*}
	\lefteqn{-\frac{1}{2}\frac{d}{dt}\|\phi_n(t)\|^2_{L^2(\Omega)}+ \int_\Omega |\nabla \phi_n(t)|^2 dx = \int_\Omega |\nabla \widetilde{m}|^2 |\phi_n|^2\ dx+2 \int_\Omega \big( (\widetilde{m}\cdot \phi_n)\nabla \widetilde{m}\big)\cdot \nabla \phi_n \ dx}\\
	&&\hspace{1cm}-\int_\Omega \nabla(\phi_n \times \widetilde{m})\cdot \nabla\phi_n \ dx + \int_\Omega \big((\phi_n \times \widetilde{m}) \times \widetilde{u}\big)\cdot \phi_n \ dx - \langle g(t),\phi_n(t) \rangle,\hspace{2cm}
\end{eqnarray*}	 
Applying H\"older's inequality and the embeddings $H^1(\Omega)\hookrightarrow L^4(\Omega)$, $H^2(\Omega) \hookrightarrow L^\infty(\Omega)$, we find
\begin{eqnarray*}
	\lefteqn{-\frac{1}{2}\frac{d}{dt}\|\phi_n(t)\|^2_{L^2(\Omega)}+ \frac{1}{2}\int_\Omega |\nabla \phi_n(t)|^2 dx\leq  C\ \|g(t)\|^2_{H^1(\Omega)^*}  } \\
	&&+\ C\ \|\widetilde{m}(t)\|^2_{H^2(\Omega)} \left(\|\widetilde{m}(t)\|^2_{H^3(\Omega)} +\|\widetilde{u}(t)\|^2_{H^1(\Omega)}\right)  \|\phi_n(t)\|^2_{L^2(\Omega)}.\hspace{3cm} \nonumber
\end{eqnarray*}
By the application of Gronwall's inequality and using the inequality $\|\phi_n(T)\|_{L^2(\Omega)}\leq \|\phi(T)\|_{L^2(\Omega)}$, we derive
\begin{align}
	&\|\phi_n(t)\|^2_{L^2(\Omega)}+\int_t^T\int_\Omega |\nabla \phi_n(s)|^2 dx\ ds \leq \Big(\|\phi_T\|^2_{L^2(\Omega)} +  \|g\|^2_{L^2(0,T;H^1(\Omega)^*)}  \Big) \nonumber\\
	&\hspace{2.3cm}\times \exp{\left\{C \ \|\widetilde{m}\|^2_{L^\infty(0,T;H^2(\Omega))} \int_0^T \left(\|\widetilde{m}(s)\|^2_{H^3(\Omega)}+ \|\widetilde{u}(s)\|^2_{H^1(\Omega)}\right)ds\right\}}, \ \ \ \  \forall \  t\in [0,T).\label{AEE1}
\end{align}
Given that the right-hand side of the above estimate is independent of $n$ and $\widetilde{m}\in \mathcal{M}$, we can conclude that the sequence $\{\phi_n\}$ is uniformly bounded in the space  $L^\infty(0,T;L^2(\Omega)) \cap L^2(0,T;H^1(\Omega))$.
Now, proceeding in a way similar to Theorem 5.2 of \cite{SPSK}, we find the following
\begin{align}
	\int_0^T \|\phi_n'(t)\|^2_{H^1(\Omega)^*} dt &\leq C\  \left( \|g\|^2_{L^2(0,T;H^1(\Omega)^*)} + \|\widetilde{m}\|^4_{L^\infty(0,T;H^2(\Omega))} \|\phi_n\|^2_{L^2(0,T;H^1(\Omega))} \right.&\nonumber\\
	& \left.\ \ \ \ \  +\   \|\widetilde{m}\|^2_{L^\infty(0,T;H^2(\Omega))} \|\widetilde{u}\|^2_{L^2(0,T;H^1(\Omega))} \|\phi_n\|^2_{L^\infty(0,T;L^2(\Omega))}\right).\nonumber
\end{align}
Therefore, using the uniform bound for $\phi_n$ in $L^\infty(0,T;L^2(\Omega))$ and $L^2(0,T;H^1(\Omega))$ from inequality \eqref{AEE1} in the above estimate, we find that $\{\phi_n^\prime\}$ is uniformly bounded in $L^2(0, T;H^1(\Omega)^*)$. Appealing to Aloglu weak* compactness and reflexive weak compactness theorems, we have
\begin{eqnarray} \left\{\begin{array}{cccll}
		\phi_n &\overset{w}{\rightharpoonup} & \phi \  &\mbox{weakly in}& \ L^2(0,T;H^1(\Omega)),\\
		\phi_n &\overset{w^\ast}{\rightharpoonup} & \phi \ &\mbox{weak$^*$ in}& \ L^{\infty}(0,T;L^2(\Omega)),\\
		\phi^\prime_n &\overset{w}{\rightharpoonup} & \phi^\prime \ &\mbox{weakly in}& \ L^2(0,T;H^1(\Omega)^*), \ \ \mbox{as} \  \ n\to \infty. \label{AEWC}
	\end{array}\right.	
\end{eqnarray}

Again the Aubin-Lions-Simon lemma (see, Corollary 4, \cite{JS}) establishes the existence of a sub-sequence of $\{\phi_n\}$ (again denoted as $\{\phi_n\}$) such that $\phi_n \overset{s}{\to} \phi$ strongly in $L^2(0,T;L^2(\Omega))$. Using this strong convergence along with \eqref{AEWC}, we can verify  that $\phi$ satisfies equality (i) of Definition \ref{AWSD} for every $\vartheta \in \text{span}(w_1,w_2,...)$. Further, as such functions are dense in $H^1(\Omega)$, it  holds true for every $\vartheta \in H^1(\Omega)$.  Since, for any  $\alpha\in L^2(0,T)$ and $\vartheta_j \in H^1(\Omega)$, the elements of the form $\sum_{j=1}^{n} \alpha_j(t)\ \vartheta_j$ are dense in $L^2(0,T;H^1(\Omega))$, so equality (i) holds for every $\vartheta \in L^2(0,T;H^1(\Omega))$. 
\end{proof}
Finally, we can conclude the proof of Theorem \ref{T-AWS}.
\begin{proof}[Proof of Theorem \ref{T-AWS}:] The result can be readily derived as a direct consequence of Lemma \ref{AL-SLS} by making the substitution $g=\nabla \cdot \left( |\nabla \widetilde{m}-\nabla m_d|^2 \big(\nabla \widetilde{m}-\nabla m_d\big)\right)$.
	
\noindent For any element $\upsilon \in L^2(0,T;H^1(\Omega))$, the following estimation for the dual product of $g$ holds:
\begin{align}\label{AEEED}
	&\int_0^T \langle g,\upsilon \rangle \ dt =-\int_0^T \int_{\Omega}\left( |\nabla \widetilde{m}-\nabla m_d|^2 \big(\nabla \widetilde{m}-\nabla m_d\big)\right)\cdot \nabla \upsilon \ dx\ dt\nonumber\\
	&\leq \int_0^T \|\nabla \widetilde{m}-\nabla m_d\|^3_{L^6(\Omega)}\ \|\nabla \upsilon\|_{L^2(\Omega)}\ dt \leq C\ \|\nabla \widetilde{m}-\nabla m_d\|^3_{L^6(0,T;L^6(\Omega))}\ \|\upsilon\|_{L^2(0,T;H^1(\Omega))}. 	
\end{align}
Therefore, in order to make this dual product meaningful we had to choose $m_d$ such that $\nabla m_d\in L^6(0,T;L^6(\Omega))$. Hence the estimate \eqref{AEEE} follow from \eqref{ALSSE} and \eqref{AEEED}.
\end{proof}

\subsection{First Order Optimality Condition}

The first-order optimality condition for nonconvex problems is a basic criterion used to analyze critical points in optimization. In contrast to convex problems, where this condition guarantees local minimality, it is not sufficient for determining the optimality of critical points in nonconvex scenarios. Since nonconvex problems can have multiple local minima or maxima, additional analysis is needed to assess their optimality. This may involve considering higher-order derivatives or specific problem structures.

\begin{proof}[Proof of Theorem \ref{FOOCT}]
	Clearly, $\mathcal{U}_{a,b}$ is a convex set. Since, the set of controls in $L^2(0,T;H^1(\Omega))$ for which there exists a regular solution in $\mathcal{M}$ is open, for any control $u\in \mathcal{U}_{ad}$ and $\epsilon>0$ sufficiently small, we have $\widetilde{u}+\epsilon (u-\widetilde{u}) \in \mathcal{U}$. Hence, $\widetilde{u}+\epsilon (u-\widetilde{u}) \in \mathcal{U}_{ad}$ for sufficiently small $\epsilon$.
	
	Now, as $\widetilde{u} \in \mathcal{U}_{ad}
	$ is an optimal control of MOCP and the control-to-state operator is Fr\'echet differentiable, the functional $\mathcal{I}(u)=\mathcal{J}(G(u),u)$ satisfies the following inequality:
	\begin{equation}\label{CFG0}
		D_u\mathcal{I}(\widetilde{u})(u-\widetilde{u})= \lim_{\epsilon \to 0^+} \frac{\mathcal{I}(\widetilde{u} + \epsilon (u - \widetilde{u}))-\mathcal{I}(\widetilde{u})}{\epsilon} \geq 0, \ \ \ \  \ \forall \ u \in \mathcal{U}_{ad}.
	\end{equation}
	Now, by setting $v=u-\widetilde{u}$ and applying chain rule, we have
	\begin{flalign}\label{FD1}
		D_u \mathcal{I} (\widetilde{u})\cdot v &= D_m\mathcal J(\widetilde{m},\widetilde{u}) \circ\left[ D_uG(\widetilde{u})\cdot v\right] + D_u\mathcal  J(\widetilde{m},\widetilde{u})\cdot v\nonumber\\
		&=\int_{\Omega_T} \widetilde{u}\cdot v\ dx\ dt + \int_{\Omega_T} \nabla \widetilde{u} \cdot \nabla v\ dx\ dt+ \int_\Omega \left(\widetilde{m}(x,T)-m_\Omega\right)\cdot z(x,T)\ dx\nonumber\\
		&\hspace{1.3cm} + \int_{\Omega_T} |\nabla \widetilde{m}-\nabla m_d|^2 (\nabla \widetilde{m}-\nabla m_d)\cdot \nabla z\ dx\ dt,
	\end{flalign}
	where $z_v:=D_uG(\widetilde{u})\cdot v\in \mathcal{M}$ \ is a unique regular solution of the linearized system \eqref{LS2}.
	
	Now, we take $\vartheta =z_v$ in weak adjoint formulation of \eqref{AS}. Then, doing a space integration by parts for the last term using $\frac{\partial z_v}{\partial \eta}=0$, we obtain
	\begin{eqnarray}
		\lefteqn{-\int_0^T(\phi(t),z_v'(t)) \ dt+\big(\widetilde{m}(x,T)-m_{\Omega}(x),z_v(T) \big)- \int_{\Omega_T} \nabla \phi \cdot \nabla z_v\ dx \ dt + \int_{\Omega_T} |\nabla \widetilde{m}|^2\phi \cdot z_v \ dx\ dt   }\nonumber\\
		&&+2 \int_{\Omega_T} (\widetilde{m}\cdot \phi)\nabla \widetilde{m}\cdot \nabla z_v\ dx\ dt - \int_{\Omega_T} \nabla (\phi \times \widetilde{m})\cdot \nabla z_v\ dx \ dt+ \int_{\Omega_T}(\Delta \widetilde{m}\times \phi)\cdot z_v\ dx\ dt\nonumber\hspace{2cm}\\
		&&-\int_{\Omega_T} (\phi \times \widetilde{u})\cdot z_v\ dx\ dt + \int_{\Omega_T} \big((\phi \times \widetilde{m})\times \widetilde{u}\big)\cdot z_v\ dx\ dt+ \int_{\Omega_T} \big(\phi \times (\widetilde{m}\times \widetilde{u})\big)\cdot z_v\ dx\ dt\nonumber\\
		&&=-\int_{\Omega_T}  |\nabla \widetilde{m}-\nabla m_d|^2 (\nabla \widetilde{m}-\nabla m_d)\cdot \nabla z_v \ dx\ dt,\label{FOE1}  \ 
	\end{eqnarray}
	where we also used the following identity: 
	\begin{eqnarray}\label{TIP}
		\int_{0}^{T} \big( z_t,\phi\big) \ dt+\int_0^T \big\langle\phi_t, z\big\rangle \ dt= \int_\Omega \left(\widetilde{m}(x,T)-m_\Omega\right)\cdot z(x,T)\ dx.
	\end{eqnarray}
	On the other hand, considering inner product of the linearized system \eqref{LS2} with $\phi$, we have 
	\begin{eqnarray}
		\lefteqn{\int_{0}^{T} \big( z_v',\phi\big) \ dt = -\int_{\Omega_T}\nabla  z_v \cdot \nabla  \phi \ dx\ dt+\int_{\Omega_T} |\nabla \widetilde{m}|^2 z_v \cdot \phi \ dx\ dt}\nonumber\\
		&&+ 2\int_{\Omega_T}   (\widetilde{m}\cdot \phi)\big(\nabla \widetilde{m}\cdot \nabla z_v\big) \ dx\ dt+\int_{\Omega_T}(z_v \times \Delta \widetilde{m})\cdot \phi\ dx\ dt-\int_{\Omega_T}  \nabla (\phi \times \widetilde{m})\cdot \nabla z_v \ dx\ dt\nonumber\\
		&&+\int_{\Omega_T}(z_v \times \widetilde{u})\cdot \phi \ dx\  dt - \int_{\Omega_T}\big( z_v \times (\widetilde{m}\times \widetilde{u})\big)\cdot \phi \ dx\ dt-\int_{\Omega_T} \big( \widetilde{m}\times (z_v \times \widetilde{u})\big)\cdot \phi \ dx\ dt\nonumber\\
		&&+\int_{\Omega_T}(\widetilde{m}\times v)\cdot \phi \ dx\ dt- \int_{\Omega_T}\big(\widetilde{m}\times (\widetilde{m}\times v)\big)\cdot \phi \ dx\ dt \label{FOE2}\ \ \ \ \ \ 
	\end{eqnarray}
	Combining equations \eqref{FOE1} and \eqref{FOE2}, then substituting the result in \eqref{FD1}, we obtain 
	\begin{equation*}
		D_u\mathcal{I}(\widetilde{u})\cdot v= \int_{\Omega_T} \widetilde{u}\cdot v\ dx\ dt+ \int_{\Omega_T} \nabla \widetilde{u} \cdot \nabla v\ dx\ dt +\int_{\Omega_T} \Big((\phi \times \widetilde{m})+  \widetilde{m} \times (\phi \times \widetilde{m})\Big)\cdot v\ dx\ dt.\nonumber
	\end{equation*}
	At last using \eqref{CFG0}, applying the cross product property $(a\times b)\cdot c=b \cdot (c\times a)$ for the last integral on the right hand side, and putting $v=u-\widetilde{u}$, we get the required optimality condition \eqref{FOOC}. 
\end{proof}

\section{Second Order Optimality Condition}

In convex optimal control problems, controls that satisfy the first-order necessary optimality conditions are globally optimal. However, for non-convex optimal control problems, additional analysis, including higher derivative analysis, is required to ensure local optimality. For optimal control problems governed by the Landau-Lifshitz-Gilbert equations, the second-order sufficient optimality conditions play a crucial role in the numerical analysis of these non-convex problems. These conditions are essential in guaranteeing the feasibility and local optimality of a control, considering the intricate dynamics and nonlinearity of the LLG  equations. 

Due to the availability of a weak solution for the adjoint system, we are empowered to establish an operator that associates each control in $\mathcal{U}_R$ with the corresponding adjoint solution. Henceforth, we shall refer to this operator as the "control-to-costate" operator. This operator, denoted as $\Phi:\mathcal{U}_R \to \mathcal{Z}$, effectively maps controls from the set $\mathcal{U}_R$ to their respective adjoint solutions in $\mathcal{Z}$.

\subsection{Control-to-Costate Operator}\label{S-CTCO}

\begin{Lem}\label{LCCTC}
	The control-to-costate operator $\Phi:\mathcal{U}_R\to \mathcal{Z}$ is Lipschitz continuous, that is, there exists a constant $C_3>0$ depending on $\Omega,T,R,m_0$ such that
	\begin{equation}\label{LCI}
		\|\Phi(u_1)-\Phi(u_2)\|_{\mathcal{Z}} \leq C_3 \ \|u_1-u_2\|_{L^2(0,T;H^1(\Omega))}, \ \ \ \ \ \ \ \forall \ u_1,u_2\in\mathcal{U}_R.
	\end{equation}
\end{Lem}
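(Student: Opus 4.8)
The plan is to mirror, at the level of the adjoint system, the difference-equation argument already used for the control-to-state operator in Lemma \ref{L-LCCTS}. Fix $u_1,u_2\in\mathcal{U}_R$, write $m_i=G(u_i)$ and $\phi_i=\Phi(u_i)$ for the associated states and adjoint states, and set $\shat{\phi}=\phi_1-\phi_2$, $\shat{m}=m_1-m_2$, $\shat{u}=u_1-u_2$. Subtracting the weak adjoint identity \eqref{WAF} written for the admissible pair $(m_2,u_2)$ from the one written for $(m_1,u_1)$, and keeping the linear operator $\mathcal{E}_{u_1}$ (with coefficients $m_1,u_1$) acting on $\shat{\phi}$ on the left, I would show that $\shat{\phi}$ is the weak solution of a linear adjoint system $\mathcal{E}_{u_1}\shat{\phi}=g$ with terminal datum $\shat{\phi}(T)=m_1(T)-m_2(T)=\shat{m}(T)$, where the target $m_\Omega$ cancels. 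Explicitly $g=(g_1-g_2)+(\mathcal{E}_{u_2}-\mathcal{E}_{u_1})\phi_2\in L^2(0,T;H^1(\Omega)^*)$, with $g_i$ the nonlinear source corresponding to $m_i$. Applying Lemma \ref{AL-SLS} to the pair $(m_1,u_1)$ then gives
$$\|\shat{\phi}\|_{\mathcal{Z}}^2\le C\,\big(\|\shat{m}(T)\|_{L^2(\Omega)}^2+\|g\|_{L^2(0,T;H^1(\Omega)^*)}^2\big),$$
where the exponential prefactor in \eqref{ALSSE} is absorbed into $C=C(\Omega,T,R)$ because $\|m_1\|_{\mathcal{M}}$ and $\|u_1\|_{L^2(0,T;H^1(\Omega))}$ are uniformly bounded on $\mathcal{U}_R$ by \eqref{SSUB}.

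The next step is to estimate $g$ term by term, exploiting that every coefficient difference factors linearly through $\shat{m}$ or $\shat{u}$. For the lower-order terms in $(\mathcal{E}_{u_2}-\mathcal{E}_{u_1})\phi_2$ one uses $|\nabla m_1|^2-|\nabla m_2|^2=\nabla\shat{m}\cdot\nabla(m_1+m_2)$, $\phi_2\times m_1-\phi_2\times m_2=\phi_2\times\shat{m}$, $\phi_2\times u_1-\phi_2\times u_2=\phi_2\times\shat{u}$, together with the splittings $(\phi_2\times m_1)\times u_1-(\phi_2\times m_2)\times u_2=(\phi_2\times\shat{m})\times u_1+(\phi_2\times m_2)\times\shat{u}$ and $\phi_2\times(m_1\times u_1)-\phi_2\times(m_2\times u_2)=\phi_2\times(\shat{m}\times u_1)+\phi_2\times(m_2\times\shat{u})$, and similarly for the $\nabla\cdot\big((\widetilde m\cdot\phi)\nabla\widetilde m\big)$ and $\Delta(\phi\times\widetilde m)$ contributions. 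Bounding each of these in $L^2(0,T;H^1(\Omega)^*)$ is precisely what estimates \eqref{AEE-1}--\eqref{AEE-6} and \eqref{AEE-8} of Lemma \ref{L-CP} are designed for: the low regularity of $\phi_2$ is accommodated by using only $\phi_2\in L^\infty(0,T;L^2(\Omega))\cap L^2(0,T;H^1(\Omega))$, while $m_1,m_2$ enter through their $L^\infty(0,T;H^2(\Omega))$ norms and $u_1,u_2$ through their $L^2(0,T;H^1(\Omega))$ norms. In every case the bound is controlled by $\|\phi_2\|_{\mathcal{Z}}\big(\|\shat{m}\|_{\mathcal{M}}+\|\shat{u}\|_{L^2(0,T;H^1(\Omega))}\big)$, and $\|\phi_2\|_{\mathcal{Z}}\le C(R)$ follows from the adjoint estimate \eqref{AEEE}.

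The hard part will be the nonlinear source difference $g_1-g_2=\nabla\cdot\big(|\nabla m_1-\nabla m_d|^2(\nabla m_1-\nabla m_d)\big)-\nabla\cdot\big(|\nabla m_2-\nabla m_d|^2(\nabla m_2-\nabla m_d)\big)$. Setting $a=\nabla m_1-\nabla m_d$ and $b=\nabla m_2-\nabla m_d$, so that $a-b=\nabla\shat{m}$, I would use the algebraic identity $|a|^2a-|b|^2b=|a|^2(a-b)+\big((a+b)\cdot(a-b)\big)b$ to rewrite $g_1-g_2$ as a divergence of terms of the schematic form (gradient $\cdot$ gradient)$\,\nabla\shat{m}$, which is exactly the structure controlled by estimate \eqref{AEE-7}. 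This yields $\|g_1-g_2\|_{L^2(0,T;H^1(\Omega)^*)}\le C\big(\|a\|_{L^6(0,T;L^6(\Omega))}^2+\|b\|_{L^6(0,T;L^6(\Omega))}^2\big)\,\|\nabla\shat{m}\|_{L^6(0,T;L^6(\Omega))}$, where the prefactor is finite because $\nabla m_i\in L^\infty(0,T;L^6(\Omega))$ by the two-dimensional embedding $H^2(\Omega)\hookrightarrow W^{1,6}(\Omega)$ combined with \eqref{SSUB}, and $\nabla m_d\in L^6(0,T;L^6(\Omega))$ by hypothesis. Finally, invoking the Lipschitz bound $\|\shat{m}\|_{\mathcal{M}}\le C\|\shat{u}\|_{L^2(0,T;H^1(\Omega))}$ from Lemma \ref{L-LCCTS} — which, through the embedding $\mathcal{M}\hookrightarrow C([0,T];H^2(\Omega))$, also controls $\|\shat{m}(T)\|_{L^2(\Omega)}$ and $\|\nabla\shat{m}\|_{L^6(0,T;L^6(\Omega))}$ — I obtain $\|g\|_{L^2(0,T;H^1(\Omega)^*)}+\|\shat{m}(T)\|_{L^2(\Omega)}\le C(\Omega,T,R)\,\|\shat{u}\|_{L^2(0,T;H^1(\Omega))}$, and substituting this into the displayed consequence of Lemma \ref{AL-SLS} delivers the claimed Lipschitz estimate \eqref{LCI}.
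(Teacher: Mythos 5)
Your proposal is correct and is essentially the paper's intended argument: the paper's own proof is a one-line reference to the method of Lemma \ref{L-LCCTS}, and your implementation — subtracting the two weak adjoint formulations so that $\shat{\phi}=\phi_1-\phi_2$ solves $\mathcal{E}_{u_1}\shat{\phi}=g$ weakly with terminal datum $\shat{m}(T)$ (the $m_\Omega$ cancelling), bounding the coefficient differences and the cubic source difference in $L^2(0,T;H^1(\Omega)^*)$ via Lemma \ref{L-CP}, and closing with Lemma \ref{AL-SLS}, the uniform bound \eqref{SSUB}, the adjoint bound \eqref{AEEE}, and the Lipschitz estimate \eqref{LCCTSO} — is exactly the toolkit the paper itself deploys for the analogous difference systems in Proposition \ref{P-CTC}. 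All the factorizations and the use of \eqref{AEE-7} for $|a|^2a-|b|^2b$ check out, so there is no gap.
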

\noindent The proof of Lemma \ref{LCCTC} can be done using a comparable approach to that employed in proving Lemma \ref{L-LCCTS}.

\begin{Pro}\label{P-CTC}
	Suppose $\bar{u}\in \mathcal{U}_R$ be the control and $m_{\bar{u}}$ be it's corresponding regular solution. Then the following two conclusions hold:
	\begin{enumerate}[label=(\roman*)]
		\item The control-to-costate mapping $\Phi$ is Fr\'echet differentiable on $\mathcal{U}_R$, that is for any $\bar{u}\in \mathcal{U}_R$, there exists a bounded linear operator $\Phi'(\bar{u}):L^2(0,T;H^1(\Omega)) \to \mathcal{Z}$ such that
		$$\frac{\|\Phi(\bar{u}+u)-\Phi(\bar{u})-\Phi'(\bar{u})[u]\|_{\mathcal{Z}}}{\|u\|_{L^2(0,T;H^1(\Omega))}}\to 0 \ \ \ \ \ \text{as}  \ \|u\|_{L^2(0,T;H^1(\Omega))}\to 0,$$
		where $\phi':=\Phi'(\bar{u})[u]$ is the unique weak solution of the following system:
		\begin{equation}\label{ASD}
			\begin{cases}
				\mathcal{E}_{\bar{u}}\phi' = -2\ (\nabla m_{\bar{u}}\cdot  \nabla z)\phi +2\ \nabla \cdot \big((z \cdot \phi)\nabla m_{\bar{u}}\big)+ 2\ \nabla \cdot \big((m_{\bar{u}}\cdot \phi)\nabla z\big)-\Delta (\phi\times z)-(\Delta z \times \phi) \vspace{0.1cm}\\
				\hspace{1.7cm}+\ (\phi \times u)- \big((\phi \times z)\times \bar{u}\big) - \big((\phi \times m_{\bar{u}} )\times u\big)-\big(\phi \times (z \times \bar{u})\big)-\big(\phi \times (m_{\bar{u}}\times u)\big)\\
				\hspace{1.7cm} +\ \nabla \cdot \big(2\ \big((\nabla m_{\bar{u}}-\nabla m_d)\cdot \nabla z\big)(\nabla m_{\bar{u}}-\nabla m_d)\big)+ \nabla \cdot \big(|\nabla m_{\bar{u}}-\nabla m_d|^2 \nabla z\big)\ \ \ \ \ \text{in}\ \Omega_T,\vspace{0.2cm}\\
				\frac{\partial \phi'}{\partial \eta}=0 \ \ \ \text{in}\ \partial \Omega_T,\vspace{0.1cm}\\
				\phi'(T)=z(T)\ \ \ \text{in} \ \Omega.
			\end{cases}
		\end{equation}
		\item The Fr\'echet derivative $\Phi'$ is Lipschitz continuous, that is, for any controls $u_1,u_2 \in \mathcal{U}_R$ and $u\in L^2(0,T;H^1(\Omega))$, there exists a constant $C_4>0$ depending on $\Omega,T,R,m_0$ such that 
		\begin{equation}\label{LC-AD}
		\|\Phi'(u_1)[u]-\Phi'(u_2)[u] \|_{\mathcal{Z}} \leq C_4\  \|u_1-u_2\|_{L^2(0,T;H^1(\Omega))} \|u\|_{L^2(0,T;H^1(\Omega))}.	
		\end{equation}
		\end{enumerate}
\begin{proof}
		Suppose $\bar{u}\in \mathcal{U}_R$ be any fixed control and $m_{\bar{u}}$ be the corresponding regular solution. Let $z$ be the unique regular solution of the linearized system \eqref{LS2} and $\phi_{\bar{u}}$ be the unique weak solution of the adjoint system \eqref{AS}. As a consequence of Lemma \ref{AL-SLS}, we can easily show that the system \eqref{ASD} has a unique weak solution $\phi'$. Note that, here $\phi'$ do not represents the derivative of the control-to-costate operator, but just a notation. Let $\shat{\phi}=\phi_{\bar{u}+u}-\phi_{\bar{u}}$, $\shat{m}=m_{\bar{u}+u}-m_{\bar{u}}$ and $w=m_{\bar{u}+u}-m_{\bar{u}}-z$. Then $\xi:=\phi_{\bar{u}+u}-\phi_{\bar{u}}-\phi'$ solves the following system weakly in the sense of Definition \ref{AWSD}: 
	\begin{equation}\label{LOCTS}
		\begin{cases}
			\displaystyle \mathcal{E}_{\bar{u}}\xi = \sum_{k=1}^{8} \Psi_k \ \ \ \ \text{in}\ \Omega_T,\\
			\frac{\partial \xi}{\partial \eta}=0 \ \ \ \ \text{on} \ \partial \Omega_T,  \ \ \ \ \xi(T)=m_{\bar{u}+u}(T)-m_{\bar{u}}(T)-z(T) \ \ \text{in} \ \Omega,	
		\end{cases}
	\end{equation}
	where $\mathcal{E}_{\bar{u}}$ is defined in \eqref{CLAS},  and the terms $\Psi_k$'s are given by
	\begin{flalign*}
		\Psi_{1} &=- \big(\nabla \shat{m} \cdot (\nabla m_{\bar{u}+u}+\nabla m_{\bar{u}}) \big) \ \shat{\phi} -|\nabla \shat{m}|^2 \phi_{\bar{u}} -2\ (\nabla m_{\bar{u}}\cdot \nabla w) \ \phi_{\bar{u}},&	\\
		\Psi_{2}&=2 \ \nabla \cdot \big((\shat{m}\cdot \shat{\phi})\nabla \shat{m}\big)+ 2\ \nabla \cdot \big((m_{\bar{u}}\cdot \shat{\phi})\nabla \shat{m}\big) + 2\ \nabla \cdot \big((\shat{m}\cdot \phi_{\bar{u}})\nabla \shat{m}\big)\\
		&\ \ \ \ \ \ \ \ \ \  + 2 \ \nabla \cdot \big((\shat{m}\cdot \shat{\phi})\nabla m_{\bar{u}}\big) + 2\  \nabla \big( (w\cdot \phi_{\bar{u}})\nabla m_{\bar{u}}\big) + 2\ \nabla \cdot \big((m_{\bar{u}}\cdot \phi_{\bar{u}})\nabla w\big),\\
		\Psi_{3}&= -\Delta (\shat{\phi}\times \shat{m}) -\Delta (\phi_{\bar{u}}\times w) ,\\
		\Psi_{4}&= - (\Delta \shat{m}\times \shat{\phi})-(\Delta w\times \phi_{\bar{u}}),\\ 
		\Psi_{5}&= \shat{\phi}\times u, \\
		\Psi_{6}&= - \big((\shat{\phi}\times \shat{m}) \times (\bar{u}+u)\big)-\big((\shat{\phi}\times m_{\bar{u}}) \times u\big)-\big((\phi_{\bar{u}}\times \shat{m})\times u\big) -\big((\phi_{\bar{u}}\times w)\times \bar{u}\big),\\
		\Psi_{7}&= - \big(\shat{\phi}\times (\shat{m} \times (\bar{u}+u))\big)-\big(\shat{\phi}\times (m_{\bar{u}} \times u)\big)-\big(\phi_{\bar{u}}\times (\shat{m} \times u)\big) -\big(\phi_{\bar{u}}\times (w\times \bar{u})\big),\\
		\Psi_{8}&= \nabla \cdot \big(\big(\nabla \shat{m}\cdot (\nabla m_{\bar{u}+u}+\nabla m_{\bar{u}}-2\nabla m_d)\big)\nabla \shat{m}\big)+ \nabla \cdot \big(|\nabla \shat{m}|^2(\nabla m_{\bar{u}}-\nabla m_d)\big)\\
		&\ \ \ \ \ \ \ \ \ \ \ +2\  \nabla \cdot \big(\big((\nabla m_{\bar{u}}-\nabla m_d)\cdot \nabla w\big)(\nabla m_{\bar{u}}-\nabla m_d)\big) + \nabla \cdot (|\nabla m_{\bar{u}}-\nabla m_d|^2\nabla w). 
	\end{flalign*}
Using Lemma \ref{AL-SLS}, we can find the following estimate on the solution of system \eqref{LOCTS}:
\begin{eqnarray}\label{DAO-E1}
	\lefteqn{ \|\xi\|^2_{L^2(0,T;H^1(\Omega))} + \|\xi_t\|^2_{L^2(0,T;H^1(\Omega)^*)}\leq \left(\|m_{\bar{u}+u}(T)-m_{\bar{u}}(T)-z(T)\|^2_{L^2(\Omega)}+ \sum_{k=1}^{8}  \|\Psi_k\|^2_{L^2(0,T;H^1(\Omega)^*)}\right) }\nonumber\\
	&&\hspace{1cm}\times \ \exp\bigg\{C  \left(\|m_{\bar{u}}\|^2_{L^\infty(0,T;H^2(\Omega))}\|m_{\bar{u}}\|^2_{L^2(0,T;H^3(\Omega))}+\|m_{\bar{u}}\|^2_{L^\infty(0,T;H^2(\Omega))} \|\bar{u}\|^2_{L^2(0,T;H^1(\Omega))}\right) \bigg\}.\ \ \ \ \ \ \ \ \ 
\end{eqnarray}
Applying estimate \eqref{AEE-1} for the $L^2(0,T;H^1(\Omega)^*)$ norm of $\Psi_1$, we obtain
\begin{flalign*}
	\|\Psi_1\|_{L^2(0,T;H^1(\Omega)^*)} &\leq C\  \big(\|m_{\bar{u}}\|_{L^\infty(0,T;H^2(\Omega))} + \|m_{\bar{u}+u}\|_{L^\infty(0,T;H^2(\Omega))}\big) \|\shat{m}\|_{L^\infty(0,T;H^2(\Omega))}\|\shat{\phi}\|_{L^2(0,T;L^2(\Omega))}&\\
	&+C\ \|\shat{m}\|^2_{L^\infty(0,T;H^2(\Omega))} \|\phi_{\bar{u}}\|_{L^2(0,T;L^2(\Omega))} + C\ \|m_{\bar{u}}\|_{L^\infty(0,T;H^2(\Omega))} \|w\|_{L^\infty(0,T;H^2(\Omega))} \|\phi_{\bar{u}}\|_{L^2(0,T;L^2(\Omega))}.
\end{flalign*}
Proceeding for $\Psi_2$ using estimate \eqref{AEE-2}, we have
\begin{flalign*}
	\|\Psi_2\|_{L^2(0,T;H^1(\Omega)^*)} & \leq C\ \left(\|\shat{m}\|_{L^\infty(0,T;H^2(\Omega))}+\|m_{\bar{u}}\|_{L^\infty(0,T;H^2(\Omega))}\right) \|\shat{\phi}\|_{L^\infty(0,T;L^2(\Omega))} \|\shat{m}\|_{L^2(0,T;H^3(\Omega))}&\\
	&\hspace{0.5cm}+ C\ \left(\|\shat{\phi}\|_{L^\infty(0,T;L^2(\Omega))} +\|\phi_{\bar{u}}\|_{L^\infty(0,T;L^2(\Omega))}\right) \|\shat{m}\|_{L^\infty(0,T;H^2(\Omega))}  \|\shat{m}\|_{L^2(0,T;H^3(\Omega))}\\
	&\hspace{0.5cm}+ C\ \|w\|_{L^\infty(0,T;H^2(\Omega))} \|\phi_{\bar{u}}\|_{L^\infty(0,T;L^2(\Omega))} \|m_{\bar{u}}\|_{L^2(0,T;H^3(\Omega))}\\
	&\hspace{0.5cm} + C\ \|m_{\bar{u}}\|_{L^\infty(0,T;H^2(\Omega))} \|\phi_{\bar{u}}\|_{L^\infty(0,T;L^2(\Omega))} \|w\|_{L^2(0,T;H^3(\Omega))}.		
\end{flalign*}

Similarly using estimates \eqref{AEE-3}, \eqref{AEE-4} and \eqref{AEE-8} to evaluate the $L^2(0,T;H^1(\Omega)^*)$ norm of $\Psi_3$, $\Psi_4$ and $\Psi_5$, we find
	\begin{flalign*}
		\|\Psi_3\|_{L^2(0,T;H^1(\Omega)^*)} &\leq C\ \|\shat{\phi}\|_{L^2(0,T;H^1(\Omega))} \|\shat{m}\|_{L^\infty(0,T;H^2(\Omega))} + C\ \|\phi_{\bar{u}}\|_{L^2(0,T;H^1(\Omega))} \|w\|_{L^\infty(0,T;H^2(\Omega))}\\
		\|\Psi_4\|_{L^2(0,T;H^1(\Omega)^*)} &\leq C\ \|\shat{m}\|_{L^\infty(0,T;H^2(\Omega))}  \|\shat{\phi}\|_{L^2(0,T;H^1(\Omega))} + C\ \|w\|_{L^\infty(0,T;H^2(\Omega))} \|\phi_{\bar{u}}\|_{L^2(0,T;H^1(\Omega))}\\
		\|\Psi_5\|_{L^2(0,T;H^1(\Omega)^*)} &\leq C\ \|\shat{\phi}\|_{L^\infty(0,T;L^2(\Omega))} \|u\|_{L^2(0,T;H^1(\Omega))}.
	\end{flalign*}
	Now, for the terms $\Psi_6$ and $\Psi_7$ using estimates \eqref{AEE-5} and \eqref{AEE-6}, we have
	\begin{flalign*}
		\|\Psi_6\|_{L^2(0,T;H^1(\Omega)^*)}+ \|\Psi_7\|_{L^2(0,T;H^1(\Omega)^*)} &\leq C\ \|\shat{\phi}\|_{L^\infty(0,T;L^2(\Omega))} \|\shat{m}\|_{L^\infty(0,T;H^2(\Omega))} \big(\|\bar{u}\|_{L^2(0,T;H^1(\Omega))}+\|u\|_{L^2(0,T;H^1(\Omega))}\big)\\
		&\hspace{0.5cm}+ C\ \|\shat{\phi}\|_{L^\infty(0,T;L^2(\Omega))} \|m_{\bar{u}}\|_{L^\infty(0,T;H^2(\Omega))} \|u\|_{L^2(0,T;H^1(\Omega))}\\
		&\hspace{0.5cm}+ C\ \|\phi_{\bar{u}}\|_{L^\infty(0,T;L^2(\Omega))} \|\shat{m}\|_{L^\infty(0,T;H^2(\Omega))} \|u\|_{L^2(0,T;H^1(\Omega))}\\
		&\hspace{0.5cm} + C\ \|\phi_{\bar{u}}\|_{L^\infty(0,T;L^2(\Omega))} \|w\|_{L^\infty(0,T;H^2(\Omega))} \|\bar{u}\|_{L^2(0,T;H^1(\Omega))} .
	\end{flalign*}
	Finally, estimating for  $\Psi_8$ using \eqref{AEE-7}, and applying the embeddings $H^1(\Omega) \hookrightarrow L^6(\Omega)$ and the inequality $\|\cdot \|_{L^6(0,T)}\leq C\ \|\cdot \|_{L^\infty(0,T)}$, we find
	\begin{flalign*}
		\|\Psi_8\|_{L^2(0,T;H^1(\Omega)^*)} &\leq C\ \|\shat{m}\|^2_{L^\infty(0,T;H^2(\Omega))} \big(\|m_{\bar{u}+u}\|_{L^\infty(0,T;H^2(\Omega))}+\|m_{\bar{u}}\|_{L^\infty(0,T;H^2(\Omega))}+ \|\nabla m_d\|_{L^6(0,T;L^6(\Omega))}\big)&\\
		&+ C\ \|w\|_{L^\infty(0,T;H^2(\Omega))} \left(\|m_{\bar{u}}\|^2_{L^\infty(0,T;H^2(\Omega))}+\|\nabla m_d\|^2_{L^6(0,T;L^6(\Omega))}\right)
	\end{flalign*}
Then let us combine all the above estimates, substitute it in \eqref{DAO-E1} and divide by $\|u\|^2_{L^2(0,T;H^1(\Omega))}$. Since $\bar{u} \in \mathcal{U}_R$, so for small enough $u$ we can guarantee that $\bar{u}+u \in \mathcal{U}_R$. Hence we have the uniform bound for $\|m_{\bar{u}}\|_{\mathcal{M}}$ and $\|m_{\bar{u}+u}\|_{\mathcal{M}}$ for such small control functions $u$. Similarly, using estimate \eqref{AEEE} we can find a bound for $\|\phi_{\bar{u}}\|_{\mathcal{Z}}$. From the Lipschitz continuity of the control-to-state and control-to-costate operators given in estimate \eqref{LCCTSO} and \eqref{LCI} respectively, it is clear that $\|\shat{m}\|_{\mathcal{M}}\leq C\ \|u\|_{L^2(0,T;H^1(\Omega))}$ and $\|\shat{\phi}\|_{\mathcal{Z}}\leq C\ \|u\|_{L^2(0,T;H^1(\Omega))}$. Again, from estimate \eqref{X1}, we know that $\|w\|_{\mathcal{M}}\leq C\ \|u\|^2_{L^2(0,T;H^1(\Omega))}  + C\  \|u\|^3_{L^2(0,T;H^1(\Omega))}$. Therefore, using all these postulates we come to the conclusion that $\frac{\|\Psi_k\|_{L^2(0,T;H^1(\Omega)^*)}}{\|u\|_{L^2(0,T;H^1(\Omega))}}\to 0$ as $\|u\|_{L^2(0,T;H^1(\Omega))}\to 0$. Also, by Proposition \ref{P-CTS}, we have
\begin{align*}
&\frac{\|m_{\bar{u}+u}(T)-m_{\bar{u}}(T)-z(T)\|_{L^2(\Omega)}}{\|u\|_{L^2(0,T;H^1(\Omega))}}\leq  \frac{\|m_{\bar{u}+u}-m_{\bar{u}}-z\|_{L^\infty(0,T;L^2(\Omega))}}{\|u\|_{L^2(0,T;H^1(\Omega))}}\\
&\hspace{1.5cm} \leq \frac{\|m_{\bar{u}+u}-m_{\bar{u}}-z\|_{\mathcal{M}}}{\|u\|_{L^2(0,T;H^1(\Omega))}} \leq \frac{\|w\|_{\mathcal{M}}}{\|u\|_{L^2(0,T;H^1(\Omega))}} \to 0 \ \ \ \ \text{as}\ \|u\|_{L^2(0,T;H^1(\Omega))}\to 0 .	
\end{align*}
Hence, we conclude the Fr\`echet differentiability of the control-to-costate operator by showing $\frac{\|\xi\|_{\mathcal{Z}}}{\|u\|_{L^2(0,T;H^1(\Omega))}}\to 0$ as $\|u\|_{L^2(0,T;H^1(\Omega))}\to 0$.

Suppose $\phi'_{u_1}:= \Phi'(u_1)[u]$ and $\phi'_{u_2}:=\Phi'(u_2)[u]$ be two weak solution of system \eqref{ASD}. Let $\shat{\psi}=\phi'_{u_1}-\phi'_{u_2}$, $\shat{m}=m_{u_1}-m_{u_2}$ and $\shat{u}=u_1-u_2$.  Then $\shat{\psi}$ will weakly satisfy the following system:
	\begin{equation}\label{DOAO}
	\begin{cases}
		\displaystyle \mathcal{E}_{u_1} \shat{\psi} = \sum_{k=1}^{8} \mathcal{B}_k\ \ \ \ \text{in}\ \Omega_T,\\
		\frac{\partial \shat{\psi}}{\partial \eta}=0 \ \ \ \ \text{on} \ \partial \Omega_T,  \ \ \ \ \shat{\psi}(T)=z_{u_1}(T)-z_{u_2}(T) \ \ \text{in} \ \Omega,	
	\end{cases}
\end{equation}
where $\mathcal{E}_{u_1}$ is defined in \eqref{CLAS},  and the terms $\mathcal{B}_k$'s are given by
\begin{flalign*}
\mathcal{B}_1&= -\big(\nabla \shat{m} \cdot (\nabla m_{u_1}+\nabla m_{u_2})\big) \ \phi'_{u_2} -2\ (\nabla \shat{m}\cdot \nabla z_{u_1})\phi_{u_1}-2\ (\nabla m_{u_2}\cdot \nabla \shat{z})\phi_{u_1}-2\ (\nabla m_{u_2}\cdot \nabla z_{u_2})\shat{\phi}\\
\mathcal{B}_2&=2\ \nabla \cdot \big((\shat{m}\cdot \phi'_{u_2})\nabla m_{u_1}\big)+2\ \nabla \cdot \big((m_{u_2}\cdot \phi'_{u_2})\nabla \shat{m}\big)+2\ \nabla \cdot \big((\shat{z}\cdot \phi_{u_1})\nabla m_{u_1}\big)+2\ \nabla \cdot \big((z_{u_2}\cdot \shat{\phi})\nabla m_{u_1}\big) \\
&\ \ \ \ \ + 2\ \nabla \cdot \big((z_{u_2}\cdot \phi_{u_2})\nabla \shat{m}\big) +2\ \nabla \cdot \big((\shat{m}\cdot \phi_{u_1})\nabla z_{u_1}\big)+  2\ \nabla \cdot \big((m_{u_2}\cdot \shat{\phi})\nabla z_{u_1}\big)+ 2\ \nabla \cdot \big((m_{u_2}\cdot \phi_{u_2})\nabla \shat{z}\big)\\
\mathcal{B}_3&=-\Delta (\phi'_{u_2}\times \shat{m})  -\Delta (\shat{\phi}\times z_{u_1})-\Delta (\phi_{u_2}\times \shat{z}),\ \ \
\mathcal{B}_4=-\Delta \shat{m}\times \phi'_{u_2} -(\Delta \shat{z} \times \phi_{u_1}) -(\Delta z_{u_2}\times \shat{\phi})\\
\mathcal{B}_5&=\phi'_{u_2}\times \shat{u}+ \shat{\phi}\times u\\
\mathcal{B}_6&= -(\phi'_{u_2}\times \shat{m})\times u_1- (\phi'_{u_2}\times m_{u_2})\times \shat{u} -(\shat{\phi}\times z_{u_1})\times u_1 -(\phi_{u_2}\times \shat{z})\times u_1 -(\phi_{u_2}\times z_{u_2})\times \shat{u} \\
& \ \ \ \ \  -(\shat{\phi}\times m_{u_1})\times u -(\phi_{u_2}\times \shat{m})\times u\\
\mathcal{B}_7&=-\phi'_{u_2}\times (\shat{m}\times u_1)-\phi'_{u_2} \times (m_{u_2}\times \shat{u}) - \shat{\phi}\times (z_{u_1}\times u_1)  -\phi_{u_2} \times (\shat{z}\times u_1)-\phi_{u_2} \times (z_{u_2} \times \shat{u})\\
&\ \ \ \ \ \  -\shat{\phi}\times (m_{u_1}\times u)-\phi_{u_2} \times (\shat{m} \times u)\\
\mathcal{B}_{8}&=  2\ \nabla \cdot \big((\nabla \shat{m}\cdot \nabla z_{u_1})(\nabla m_{u_1}-\nabla m_d)\big) + 2\  \nabla \cdot\big(\big((\nabla m_{u_2}-\nabla m_d)\cdot \nabla \shat{z}\big)(\nabla m_{u_1}-\nabla m_d)\big)\\
&\ \ \ \ \ \ +  2\  \nabla \cdot \big(\big((\nabla m_{u_2}-\nabla m_d)\cdot \nabla z_{u_2}\big)\nabla \shat{m}\big)  +\nabla \cdot \big( \big(\nabla \shat{m}\cdot (\nabla m_{u_1} +\nabla m_{u_2} - 2\ \nabla m_d)\big)\nabla z_{u_1}\big) \\
&\ \ \ \ \ \ + \nabla \cdot \big(|\nabla m_{u_2}-\nabla m_d|^2\nabla \shat{z}\big).	
\end{flalign*}

Again appealing to Lemma \ref{AL-SLS} we can find an estimate on the solution of the system \eqref{DOAO} as follows:
\begin{eqnarray}\label{DAO-E2}
	\lefteqn{ \|\shat{\psi}\|^2_{L^2(0,T;H^1(\Omega))} + \|\shat{\psi}_t\|^2_{L^2(0,T;H^1(\Omega)^*)} \leq \left(\|z_{u_1}(T)-z_{u_2}(T) \|^2_{L^2(\Omega)}+ \sum_{k=1}^{8}  \|\mathcal{B}_k\|^2_{L^2(0,T;H^1(\Omega)^*)}\right) }\nonumber\\
	&&\hspace{0.5cm}\times \ \exp\bigg\{C  \left(\|m_{u_1}\|^2_{L^\infty(0,T;H^2(\Omega))}\|m_{u_1}\|^2_{L^2(0,T;H^3(\Omega))}+\|m_{u_1}\|^2_{L^\infty(0,T;H^2(\Omega))} \|u_1\|^2_{L^2(0,T;H^1(\Omega))}\right) \bigg\}.\ \ \ \ 
\end{eqnarray}
By doing calculations similar to Part-(i), we can find $\|\mathcal{B}_k\|_{L^2(0,T;H^1(\Omega)^*)} \leq C\ \|u_1-u_2\|_{L^2(0,T;H^1(\Omega))} \|u\|_{L^2(0,T;H^1(\Omega))}$. Also, for the final time data, we will use the estimate $\|z_{u_1}(T)-z_{u_2}(T)\|^2_{L^2(\Omega)} \leq \|z_{u_1}-z_{u_2}\|^2_{L^\infty(0,T;L^2(\Omega))} \leq C\ \|u_1-u_2\|^2_{L^2(0,T;H^1(\Omega))}\ \|u\|^2_{L^2(0,T;H^1(\Omega))}$. Therefore, by substituting these estimates in \eqref{DAO-E2}, we obtain the required result \eqref{LC-AD}. Hence the proof.
\end{proof}

\end{Pro}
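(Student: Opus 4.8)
The plan is to lift, to the level of the adjoint system, the argument already used for the control-to-state operator in Proposition \ref{P-CTS}: everywhere the strong-solution estimate of Lemma \ref{L-SLS} and the cross-product bounds of Lemma \ref{PROP2} were used, I would instead invoke the weak-solution estimate of Lemma \ref{AL-SLS} and the dual-norm bounds of Lemma \ref{L-CP}. For part (i), I would first check that the right-hand side of \eqref{ASD}, which is assembled from $z=G'(\bar{u})[u]\in\mathcal{M}$, the adjoint state $\phi_{\bar{u}}\in\mathcal{Z}$, and the regular state $m_{\bar{u}}\in\mathcal{M}$, belongs to $L^2(0,T;H^1(\Omega)^*)$; Lemma \ref{AL-SLS} then produces a unique weak solution $\phi'\in\mathcal{Z}$. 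To identify $\phi'$ with the Fr\'echet derivative, I set $\xi:=\phi_{\bar{u}+u}-\phi_{\bar{u}}-\phi'$ and obtain, by subtracting the weak formulations, a linear adjoint equation $\mathcal{E}_{\bar{u}}\xi=\sum_{k=1}^{8}\Psi_k$ with terminal data $\xi(T)=m_{\bar{u}+u}(T)-m_{\bar{u}}(T)-z(T)$, where the sources $\Psi_k$ collect all the quadratic-and-higher differences generated by the nonlinearities.

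Applying Lemma \ref{AL-SLS} to this equation reduces everything to estimating each $\|\Psi_k\|_{L^2(0,T;H^1(\Omega)^*)}$ together with the terminal term. Each $\Psi_k$ I would expand by the Leibniz rule and bound with the matching inequality of Lemma \ref{L-CP}: the $(\nabla\cdot\nabla)\phi$-type terms by \eqref{AEE-1}, the divergence terms by \eqref{AEE-2}, the Laplacian and $\Delta\times$ terms by \eqref{AEE-3}--\eqref{AEE-4}, the triple cross products by \eqref{AEE-5}--\eqref{AEE-6}, and the cubic cost-gradient term by \eqref{AEE-7} (which is precisely what forces the hypothesis $\nabla m_d\in L^6(0,T;L^6(\Omega))$). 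Every resulting bound is a product of norms of $\shat{m}=m_{\bar{u}+u}-m_{\bar{u}}$, $\shat{\phi}=\phi_{\bar{u}+u}-\phi_{\bar{u}}$ and $w=m_{\bar{u}+u}-m_{\bar{u}}-z$. Feeding in the Lipschitz bounds $\|\shat{m}\|_{\mathcal{M}}\le C\|u\|_{L^2(0,T;H^1(\Omega))}$ and $\|\shat{\phi}\|_{\mathcal{Z}}\le C\|u\|_{L^2(0,T;H^1(\Omega))}$ from Lemmas \ref{L-LCCTS} and \ref{LCCTC}, plus the quadratic estimate for $\|w\|_{\mathcal{M}}$ from \eqref{X1}, shows that each $\|\Psi_k\|$ is of order $o(\|u\|_{L^2(0,T;H^1(\Omega))})$; the terminal term is controlled by $\|w\|_{\mathcal{M}}$ through Proposition \ref{P-CTS}. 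Hence $\|\xi\|_{\mathcal{Z}}=o(\|u\|_{L^2(0,T;H^1(\Omega))})$, i.e.\ $\phi'=\Phi'(\bar{u})[u]$.

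Part (ii) I would treat in the same spirit: writing $\shat{\psi}=\phi'_{u_1}-\phi'_{u_2}$ with $\phi'_{u_i}=\Phi'(u_i)[u]$, one derives a system $\mathcal{E}_{u_1}\shat{\psi}=\sum_{k=1}^{8}\mathcal{B}_k$ with terminal value $\shat{\psi}(T)=z_{u_1}(T)-z_{u_2}(T)$, applies Lemma \ref{AL-SLS}, and estimates each $\mathcal{B}_k$ by the same inequalities of Lemma \ref{L-CP}. Now every source carries one factor that is a difference (either $\shat{u}=u_1-u_2$ directly, or one of $\shat{m}=m_{u_1}-m_{u_2}$, $\shat{z}=z_{u_1}-z_{u_2}$, $\shat{\phi}=\phi_{u_1}-\phi_{u_2}$, each bounded by $C\|u_1-u_2\|_{L^2(0,T;H^1(\Omega))}$ through the Lipschitz continuity of $G$, $G'$ and $\Phi$) and one factor proportional to the direction $u$, so $\|\mathcal{B}_k\|\le C\|u_1-u_2\|_{L^2(0,T;H^1(\Omega))}\|u\|_{L^2(0,T;H^1(\Omega))}$, which yields \eqref{LC-AD}.

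The main obstacle is organizational rather than analytic: the lists $\{\Psi_k\}$ and $\{\mathcal{B}_k\}$ are long, and each nonlinearity splits into several pieces of differing regularity that must be paired with exactly the right estimate. The genuinely delicate point is that $\phi$, $\phi'$ and their differences live only in the weak space $\mathcal{Z}$, so no strong spatial norm of the adjoint is available; every term must be tested against an $H^1(\Omega)$ function and bounded through Lemma \ref{L-CP}, with the $L^2$-only regularity always assigned to the adjoint factor in each product. Matching the cubic cost term to \eqref{AEE-7}, and tracking through it the $L^6$ requirement on $\nabla m_d$, is the one place where the functional framework is genuinely forced.
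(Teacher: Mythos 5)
Your proposal follows essentially the same route as the paper's own proof: for (i) you form $\xi=\phi_{\bar{u}+u}-\phi_{\bar{u}}-\phi'$, write it as a weak solution of $\mathcal{E}_{\bar{u}}\xi=\sum_k\Psi_k$ with terminal datum $m_{\bar{u}+u}(T)-m_{\bar{u}}(T)-z(T)$, apply Lemma \ref{AL-SLS} and the dual-norm estimates \eqref{AEE-1}--\eqref{AEE-7}, and then feed in the Lipschitz bounds for $\shat{m}$, $\shat{\phi}$ and the quadratic bound \eqref{X1} for $w$; for (ii) you derive $\mathcal{E}_{u_1}\shat{\psi}=\sum_k\mathcal{B}_k$ and bound each source by one difference factor times one factor in $u$, exactly as the paper does. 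The argument, including the placement of the weak-regularity adjoint factor in each product estimate and the role of $\nabla m_d\in L^6(0,T;L^6(\Omega))$ through \eqref{AEE-7}, is correct and matches the paper's proof.
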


\begin{Cor}\label{C-CTCD}
	The Fr\'echet derivative of the control-to-costate operator satisfies the following estimate:
	\begin{equation}\label{FDCTC-S}
		\|\Phi'(u)[v]\|_{\mathcal{Z}} \leq C\ \|u\|_{L^2(0,T;H^1(\Omega))} \ \|v\|_{L^2(0,T;H^1(\Omega))}.
	\end{equation}
\end{Cor}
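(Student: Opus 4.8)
The plan is to read the bound directly off the defining system \eqref{ASD} for $\phi' := \Phi'(u)[v]$, in which the base control is taken to be $u$ and the perturbation direction is $v$; here $z := G'(u)[v]$ is the regular solution of the linearized state equation \eqref{LS2} and $\phi := \phi_u = \Phi(u)$ is the associated adjoint state. Since $\phi'$ is by definition the unique weak solution of \eqref{ASD}, the first step is to apply the a priori estimate \eqref{ALSSE} of Lemma \ref{AL-SLS}, which controls $\|\phi'\|_{\mathcal{Z}}^2$ by the $L^2(\Omega)$-norm of the terminal datum $\phi'(T)=z(T)$ plus the $L^2(0,T;H^1(\Omega)^*)$-norm of the right-hand side $g$ of \eqref{ASD}, times an exponential factor. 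Because $u\in\mathcal{U}_R$, estimate \eqref{SSUB} makes $\|m_u\|_{\mathcal{M}}$ and $\|u\|_{L^2(0,T;H^1(\Omega))}$ uniformly bounded, so the exponential factor is dominated by a constant $C=C(\Omega,T,R,m_0)$; similarly, \eqref{AEEE} bounds $\|\phi_u\|_{\mathcal{Z}}$ uniformly on $\mathcal{U}_R$.

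Next I would dispose of the terminal datum via $\|\phi'(T)\|_{L^2(\Omega)}=\|z(T)\|_{L^2(\Omega)}\le\|z\|_{\mathcal{M}}$, and then bound $\|z\|_{\mathcal{M}}=\|G'(u)[v]\|_{\mathcal{M}}$ by Corollary \ref{C-CTSD} (estimate \eqref{FDCTS-S}). The main body of the argument is the term-by-term estimation of $g$ in the dual norm, which is precisely what Lemma \ref{L-CP} was assembled for. Concretely, the gradient–product term $(\nabla m_u\cdot\nabla z)\phi$ is handled by \eqref{AEE-1}; the divergence terms $\nabla\cdot\big((z\cdot\phi)\nabla m_u\big)$ and $\nabla\cdot\big((m_u\cdot\phi)\nabla z\big)$ by \eqref{AEE-2}; the term $\Delta(\phi\times z)$ by \eqref{AEE-3}; $\Delta z\times\phi$ by \eqref{AEE-4}; $\phi\times v$ by \eqref{AEE-8}; the iterated cross products $(\phi\times z)\times u$ and $(\phi\times m_u)\times v$ by \eqref{AEE-5}; and $\phi\times(z\times u)$, $\phi\times(m_u\times v)$ by \eqref{AEE-6}. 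The two remaining nonlinear gradient terms carrying $\nabla m_d$ are controlled by \eqref{AEE-7}, after invoking the two-dimensional embedding $H^1(\Omega)\hookrightarrow L^6(\Omega)$ to bound $\|\nabla m_u\|_{L^6(0,T;L^6(\Omega))}$ and $\|\nabla z\|_{L^6(0,T;L^6(\Omega))}$ by $\|m_u\|_{L^\infty(0,T;H^2(\Omega))}$ and $\|z\|_{L^\infty(0,T;H^2(\Omega))}$, respectively, together with the standing hypothesis $\nabla m_d\in L^6(0,T;L^6(\Omega))$.

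In each of these estimates exactly one factor is the perturbation-sensitive quantity $z$ or $v$, while the remaining factors are $m_u$, $\phi_u$, or $u$, all uniformly bounded on $\mathcal{U}_R$. Substituting $\|z\|_{\mathcal{M}}\le C\,\|u\|_{L^2(0,T;H^1(\Omega))}\|v\|_{L^2(0,T;H^1(\Omega))}$ from \eqref{FDCTS-S} into the terms carrying $z$, and using the uniform $\mathcal{U}_R$-bounds for the rest, every summand of $g$ — and the terminal contribution — is dominated by $C\,\|u\|_{L^2(0,T;H^1(\Omega))}\|v\|_{L^2(0,T;H^1(\Omega))}$; feeding this back into \eqref{ALSSE} yields the claimed estimate \eqref{FDCTC-S}. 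I expect no conceptual obstacle: the argument is essentially a bookkeeping exercise that re-uses the dual-norm estimates already performed for the $\Psi_k$ in the proof of Proposition \ref{P-CTC}(i). The only mildly delicate point is verifying that the $\nabla m_d$ terms, which do not contain the base control, still produce the required factor through their linear dependence on $\nabla z$, so that the bound remains linear in $\|v\|$ and reduces to \eqref{FDCTC-S}.
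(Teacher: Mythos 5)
Your route is exactly the one the paper intends for this corollary (which it states without proof): regard $\phi'=\Phi'(u)[v]$ as the weak solution of \eqref{ASD}, apply the a priori estimate \eqref{ALSSE} of Lemma \ref{AL-SLS}, absorb the exponential into $C(\Omega,T,R,m_0)$ via \eqref{SSUB}, bound the terminal datum $z(T)$ through Corollary \ref{C-CTSD}, and estimate the sources term by term with Lemma \ref{L-CP}. The bookkeeping and the matching of each source to the right item of Lemma \ref{L-CP} are correct. The gap is in the final domination claim. The sources of \eqref{ASD} that carry the direction $v$ \emph{directly} --- $\phi\times v$ (estimated by \eqref{AEE-8}), $(\phi\times m_u)\times v$ (by \eqref{AEE-5}) and $\phi\times(m_u\times v)$ (by \eqref{AEE-6}) --- are bounded by $C\,\|\phi_u\|_{L^\infty(0,T;L^2(\Omega))}\big(1+\|m_u\|_{L^\infty(0,T;H^2(\Omega))}\big)\,\|v\|_{L^2(0,T;H^1(\Omega))}$, and neither coefficient contributes a factor $\|u\|_{L^2(0,T;H^1(\Omega))}$: $|m_u|=1$ pointwise, and $\|\phi_u\|_{\mathcal Z}$ is controlled through \eqref{AEEE} by the data $m_\Omega$, $m_d$, not by $\|u\|$. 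Uniform boundedness on $\mathcal U_R$ only yields $C\,\|v\|$, and since $\mathcal U_R$ admits controls of arbitrarily small norm there is no reverse inequality $C\|v\|\le C'\|u\|\,\|v\|$. So what your argument actually proves is $\|\Phi'(u)[v]\|_{\mathcal Z}\le C(R)\,\|v\|_{L^2(0,T;H^1(\Omega))}$, not \eqref{FDCTC-S}. You also flagged the wrong delicate point: the $\nabla m_d$ terms are harmless (they are linear in $\nabla z$ and inherit whatever bound $z$ satisfies), whereas the direct-$v$ terms are precisely where the product structure breaks.

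The same defect sits inside \eqref{FDCTS-S}, which you invoke: the source of \eqref{LS2} is $m_u\times v-m_u\times(m_u\times v)$ with $|m_u|=1$, so the energy estimate of Lemma \ref{L-SLS} gives $\|z\|_{\mathcal M}\le C\,\|v\|$ --- this is in fact what the paper itself derives inside the proof of Proposition \ref{P-CTS} (``$\|z_{u_2}\|_{\mathcal M}\le C\,\|u\|_{L^2(0,T;H^1(\Omega))}$'', linear in the direction alone). Indeed the product bound cannot hold in the stated generality: letting $\|u\|\to 0$ within $\mathcal U_R$, the right-hand side of \eqref{FDCTC-S} vanishes while $\Phi'(u)[v]$ converges to the derivative at the limiting control, which is nonzero for generic $v$ (its defining system retains the nonvanishing sources $\phi\times v$ and the $z$-terms). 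So the extra factor $\|u\|$ in both corollaries appears to be an inconsistency of the paper rather than something a proof can recover; the bound $C(R)\|v\|$ that your argument legitimately delivers is also the one actually used downstream (e.g.\ in the proof of Theorem \ref{T-SOLO}, where $\|\widetilde u\|\le C(R)$ makes the two forms interchangeable). As a proof of the statement as literally written, however, the step ``every summand of $g$ is dominated by $C\|u\|\|v\|$'' fails.
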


\subsection{Second Order Optimality Condition}\label{S-SOOC}

In the previous subsection, we proved the Fr\'echet derivative of the functional $\mathcal{J}$ in terms of state variable and co-state variable. Since both the control-to-state and control-to-costate operators are Fr\'echet differentiable, so we can evaluate the second order derivative of the cost function.

\begin{proof}[Proof of Theorem \ref{T-SOLO}]
	
	Let $\widetilde{u} $ be a fixed control  in the set $\mathcal{U}_{ad}$. Now, since the control set $\mathcal{U}$ is an open set, choose the value of $\epsilon$ small enough such that for $u\in \mathcal{U}_{ad}$ with $\|u-\widetilde{u}\|_{L^2(0,T;H^1(\Omega))}<\epsilon$ ensures that for any $\theta\in (0,1)$ the control $\widetilde{u}+\theta(u-\widetilde{u})\in \mathcal{U}$. Now, by Proposition \ref{P-CTS}, it is clear that the control-to-state operator $G$ is Fr\'echet differentiable  over this set of controls. Indeed, we can show that it is infinitely differentiable (see, Proposition 5.1 of \cite{SPSK}). Consequently, the reduced functional $\mathcal{I}$ is infinitely differentiable. Therefore, the Taylor's expansion for the functional $\mathcal{I}$ can be given by 
\begin{equation}\label{SOE1}
	\mathcal{J}(m,u)=\mathcal{I}(u)=\mathcal{I}(\widetilde{u})+\mathcal{I}'(\widetilde{u})(u-\widetilde{u})+ \frac{1}{2}\ \mathcal{I}''\big(\widetilde{u}+\theta(u-\widetilde{u})\big)\ (u-\widetilde{u})^2,
\end{equation}
	where $\theta \in (0,1)$. From the first order optimality condition \eqref{FOOC}, it is clear that 	$\mathcal{I}'(\widetilde{u})(u-\widetilde{u})\geq 0.	$
	From the definition of critical cone, it also clearly appears that, $u-\widetilde{u} \in  \Lambda(\widetilde{u})$, \ for all $u \in \mathcal{U}_{ad}$.
	
	 Since both the control-to-state and control-to-costate operators are Fr\'echet differentiable, so for any controls $u_1,u_2\in L^2(0,T;H^1(\Omega))$, the second derivative of the functional $\mathcal{I}$ is given by
	 \begin{eqnarray*}
	 	\lefteqn{\mathcal{I}''(\widetilde{u})[u_1,u_2]=\int_{\Omega_T} u_1\cdot u_2\ dx\ dt+\int_{\Omega_T} \nabla u_1\cdot \nabla u_2\ dx\ dt+ \int_{\Omega_T} \big(\phi'_{\widetilde{u}}[u_2]\times m_{\widetilde{u}}\big)\cdot u_1\ dx \ dt  }\nonumber\\
	 	&& + \int_{\Omega_T} \big( \phi_{\widetilde{u}} \times m'_{\widetilde{u}}[u_2]\big) \cdot u_1\ dx\ dt + \int_{\Omega_T} \big(m'_{\widetilde{u}}[u_2]\times (\phi_{\widetilde{u}}\times m_{\widetilde{u}})\big)\cdot u_1\ dx \ dt\nonumber\\
	 	&& +\int_{\Omega_T} \big(m_{\widetilde{u}} \times (\phi'_{\widetilde{u}}[u_2]\times m_{\widetilde{u}})\big)\cdot u_1 \ dx\ dt + \int_{\Omega_T} \big(m_{\widetilde{u}} \times (\phi_{\widetilde{u}} \times m'_{\widetilde{u}}[u_2])\big) \cdot u_1\ dx\ dt.
	 \end{eqnarray*}

 Next, we will show that the second derivative of the functional $\mathcal{I}$ satisfies the following continuity property 
 $$\left|\left[\mathcal{I}''\big(\widetilde{u}+\theta(u-\widetilde{u})\big)\ -\mathcal{I}''\big(\widetilde{u}\big) \right] (u-\widetilde{u})^2\right| \leq C\ \|u-\widetilde{u}\|^3_{L^2(0,T;H^1(\Omega))},$$
for any $\theta \in (0,1)$. In order to prove this inequality, we need the following estimates. 

 Applying H\"olders inequality and then implementing the Lipschitz continuity of Fr\'echet derivative of control-to-costate operator \eqref{LC-AD}, continuous dependency of the control-to-state operator \eqref{LCCTSO} and estimate \eqref{FDCTC-S}, we obtain 
\begin{flalign*}
	&\left| \int_{\Omega_T} \left( \phi'_{\widetilde{u}+\theta (u-\widetilde{u})}[u-\widetilde{u}] \times m_{\widetilde{u}+\theta (u-\widetilde{u})}\right) \cdot (u-\widetilde{u}) \ dx\ dt -\int_{\Omega_T} \left( \phi'_{\widetilde{u}}[u-\widetilde{u}] \times m_{\widetilde{u}}\right) \cdot (u-\widetilde{u}) \ dx\ dt \ \right|\\
	&\hspace{1cm}\leq \|\phi'_{\widetilde{u}+\theta (u-\widetilde{u})}[u-\widetilde{u}]-\phi'_{\widetilde{u}}[u-\widetilde{u}]\|_{L^2(0,T;L^2(\Omega))} \ \|u-\widetilde{u}\|_{L^2(0,T;L^2(\Omega))}\\
	&\hspace{3cm}+ \|\phi'_{\widetilde{u}}[u-\widetilde{u}]\|_{L^\infty(0,T;L^2(\Omega))} \ \|m_{\widetilde{u}+\theta (u-\widetilde{u})}-m_{\widetilde{u}}\|_{L^2(0,T;H^1(\Omega))} \ \|u-\widetilde{u}\|_{L^2(0,T;H^1(\Omega))}\\
	& \hspace{1cm} \leq C\ \|u-\widetilde{u}\|^3_{L^2(0,T;H^1(\Omega))}.
\end{flalign*}

Applying H\"olders inequality,   the estimates \eqref{LC-CTS},  \eqref{LCCTSO},  \eqref{LCI},   \eqref{FDCTS-S},  and \eqref{ALSSE},  we get
\begin{align*}
	&\left|\int_{\Omega_T} \left[\left(m'_{\widetilde{u}+\theta (u-\widetilde{u})}[u-\widetilde{u}] \times (\phi_{\widetilde{u}+\theta (u-\widetilde{u})}\times m_{\widetilde{u}+\theta (u-\widetilde{u})})\right)-  \left(m'_{\widetilde{u}}[u-\widetilde{u}] \times (\phi_{\widetilde{u}}\times m_{\widetilde{u}})\right)\right]\cdot (u-\widetilde{u})\ dx\ dt\ \right|\\
	&\leq \int_0^T \|m'_{\widetilde{u}+\theta (u-\widetilde{u})}[u-\widetilde{u}]-m'_{\widetilde{u}}[u-\widetilde{u}]\|_{L^4(\Omega)}\  \|\phi_{\widetilde{u}+\theta(u-\widetilde{u})}\|_{L^2(\Omega)} \ \|m_{\widetilde{u}+\theta (u-\widetilde{u})}\|_{L^8(\Omega)}\  \|u-\widetilde{u}\|_{L^8(\Omega)} \ dt\\
	& \hspace{1cm}+ \int_0^T \|m'_{\widetilde{u}}[u-\widetilde{u}]\|_{L^\infty(\Omega)} \ \|\phi_{\widetilde{u}+\theta (u-\widetilde{u})}-\phi_{\widetilde{u}}\|_{L^2(\Omega)}\ \|m_{\widetilde{u}+\theta (u-\widetilde{u})}\|_{L^4(\Omega)} \  \|u-\widetilde{u}\|_{L^4(\Omega)}\ dt \\
	&\hspace{1cm}+ \int_0^T  \|m'_{\widetilde{u}}[u-\widetilde{u}]\|_{L^\infty(\Omega)} \ \|\phi_{\widetilde{u}}\|_{L^2(\Omega)}\ \|m_{\widetilde{u}+\theta (u-\widetilde{u})}-m_{\widetilde{u}}\|_{L^4(\Omega)} \  \|u-\widetilde{u}\|_{L^4(\Omega)} \ dt\\
	&\leq C\ \Big[ \|m'_{\widetilde{u}+\theta (u-\widetilde{u})}[u-\widetilde{u}]-m'_{\widetilde{u}}[u-\widetilde{u}]\|_{L^2(0,T;H^1(\Omega))}\ \| \phi_{\widetilde{u}+\theta (u-\widetilde{u})}-\phi_{\widetilde{u}}\|_{L^\infty(0,T;L^2(\Omega))}\ \|m_{\widetilde{u}+\theta (u-\widetilde{u})}\|_{L^\infty(0,T;H^1(\Omega))}\\
	&\hspace{1cm}+ \|m'_{\widetilde{u}}[u-\widetilde{u}]\|_{L^\infty(0,T;H^2(\Omega))} \ \|\phi_{\widetilde{u}+\theta (u-\widetilde{u})}-\phi_{\widetilde{u}}\|_{L^2(0,T;L^2(\Omega))}\ \|m_{\widetilde{u}+\theta (u-\widetilde{u})}\|_{L^\infty(0,T;H^1(\Omega))} \\
	&\hspace{1cm} +\   \|m'_{\widetilde{u}}[u-\widetilde{u}]\|_{L^\infty(0,T;H^2(\Omega))} \  \|\phi_{\widetilde{u}}\|_{L^\infty(0,T;L^2(\Omega))}\ \|m_{\widetilde{u}+\theta (u-\widetilde{u})}-m_{\widetilde{u}}\|_{L^2(0,T;H^1(\Omega))} \Big]\  \|u-\widetilde{u}\|_{L^2(0,T;H^1(\Omega))}\\
	&\leq C\ \|u-\widetilde{u}\|^3_{L^2(0,T;H^1(\Omega))}.
\end{align*}

To establish the validity of other estimates, we can employ analogous arguments. Therefore, using these estimates and  assumption \eqref{AOSOD}, we conclude the following estimation
\begin{eqnarray}\label{SOE3}
\mathcal{I}''\big(\widetilde{u}+\theta(u-\widetilde{u})\big)\ (u-\widetilde{u})^2
&= &\mathcal{I}''\big(\widetilde{u}\big)\ (u-\widetilde{u})^2+ \left[\mathcal{I}''\big(\widetilde{u}+\theta(u-\widetilde{u})\big)\ -\mathcal{I}''\big(\widetilde{u}\big) \right](u-\widetilde{u})^2\\
&\geq& \delta \ \|u-\widetilde{u}\|^2_{L^2(0,T;H^1(\Omega))}-C\ \|u-\widetilde{u}\|^3_{L^2(0,T;H^1(\Omega))}
\geq \frac{\delta}{2} \  \|u-\widetilde{u}\|^2_{L^2(0,T;H^1(\Omega))},	 \nonumber
\end{eqnarray}
in which we have chosen $\epsilon>0$  small such that $\|u-\widetilde{u}\|_{L^2(0,T;H^1(\Omega))}\leq \frac{\delta}{2C}$, where $\delta>0$ is the constant coming from \eqref{AOSOD}. Finally, from \eqref{SOE1}-\eqref{SOE3}, it is evident that 
 $\mathcal{I}(u)\geq \mathcal{I}(\widetilde{u})+ \sigma\ \|u-\widetilde{u}\|^2_{L^2(0,T;H^1(\Omega))},$
 with $\sigma=\delta/4$, provided $\|u-\widetilde{u}\|_{L^2(0,T;H^1(\Omega))}\leq \epsilon$ for a sufficiently small $\epsilon>0$. Hence the proof. 
\end{proof} 

\section{Global Optimal Control}\label{S-GOC}
The second-order sufficient conditions primarily provide localized information and often fall short of enabling a definitive determination regarding whether a given point constitutes a global minimum for the OCP under consideration. This pursuit leads us to a specific condition pertaining to the boundedness of a certain norm of the co-state variable by a constant quantity that relies exclusively on the given data and is explicitly quantifiable. 
\begin{proof}[Proof of Theorem \ref{T-GO}]

 Let us define $\shat{m}:= m-\widetilde{m}$ and $\shat{u}:= u-\widetilde{u}$.	To establish the global optimality of the control $\widetilde{u}$, we assert that $ \mathcal{I}(u)-\mathcal{I}(\widetilde{u})\geq 0$ for all $u\in \mathcal{U}_{ad} \backslash \{\widetilde{u}\}$. By doing a straightforward calculation, we arrive at the following equality:
\begin{align}\label{GO-1}
	\mathcal{I}(u)-\mathcal{I}(\widetilde{u}) =& \ \frac{1}{2}\int_{\Omega_T} |\shat{u}|^2\ dx\ dt + \int_{\Omega_T} \shat{u}  \cdot \widetilde{u} \ dx\ dt + \frac{1}{2} \int_{\Omega_T} |\nabla \shat{u}|^2 \ dx\ dt+ \int_{\Omega_T} \nabla \shat{u} \cdot \nabla \widetilde{u} \ dx\ dt\nonumber\\
	&+ \frac{1}{2} \int_{\Omega} |\shat{m}(x,T)|^2\ dx\ dt+ \int_{\Omega} \shat{m}(x,T)\cdot \big(\widetilde{m}(x,T)-m_{\Omega}(x)\big) \ dx\nonumber\\
	& + \frac{1}{4} \int_{\Omega_T} |\nabla \shat{m}|^4\ dx\ dt+ \int_{\Omega_T} \nabla \shat{m} \cdot (\nabla \widetilde{m}-\nabla m_d)\ |\nabla \widetilde{m}-\nabla m_d|^2 \ dx\ dt\nonumber\\
	&+ \frac{3}{2} \int_{\Omega_T} |\nabla \shat{m}|^2 \cdot |\nabla \widetilde{m}-\nabla m_d|^2 \ dx\ dt+ \int_{\Omega_T} |\nabla \shat{m}|^2 \ \nabla \shat{m}\cdot (\nabla \widetilde{m}-\nabla m_d)\ dx\ dt.
\end{align}
Now, applying the estimate $\int_{\Omega_T} |\nabla \shat{m}|^2 \ \nabla \shat{m}\cdot (\nabla \widetilde{m}-\nabla m_d)\ dx\ dt\geq -\frac{3}{2} \int_{\Omega_T} |\nabla \shat{m}|^2 \ |\nabla \widetilde{m}-\nabla m_d|^2 \ dx\ dt - \frac{1}{6} \int_{\Omega_T} |\nabla \shat{m}|^4 \ dx\ dt$ 
and the variational inequality \eqref{FOOC} in \eqref{GO-1}, we find 
\begin{align}\label{GO-2}
	\mathcal{I}(u)-\mathcal{I}(\widetilde{u}) \geq&  \ \frac{1}{2}\int_{\Omega_T}\left( |\shat{u}|^2+|\nabla \shat{u}|^2 \right)\ dx\ dt+ \frac{1}{2} \int_{\Omega} |\shat{m}(x,T)|^2\ dx\ dt+ \frac{1}{12} \int_{\Omega_T} |\nabla \shat{m}|^4\ dx\ dt + \mathcal{R},
\end{align}\vspace{-0.5cm}
\begin{flalign*}
	\text{where}\ \mathcal{R} :=& -\int_{\Omega_T} (\phi \times \widetilde{m})\cdot \shat{u} \ dx\ dt + \int_{\Omega} \shat{m}(x,T)\cdot \big(\widetilde{m}(x,T)-m_{\Omega}(x)\big) \ dx&\\
	&-\int_{\Omega_T} \big(\widetilde{m} \times (\phi \times \widetilde{m}) \big) \cdot \shat{u}\ dx\ dt + \int_{\Omega_T} \nabla \shat{m} \cdot \big(\nabla \widetilde{m}-\nabla m_d\big)\ |\nabla \widetilde{m}-\nabla m_d|^2 \ dx\ dt.
\end{flalign*}
From the definition of $\shat{u}$ and $\shat{m}$, we know that $(\shat{m},\shat{u})$ satisfies the following system 
\begin{equation}\label{GO-S}
	\begin{cases}
		\begin{array}{l}
			\mathcal{L}_{\widetilde{u}}\shat{m}=|\nabla \shat{m}|^2 \ \shat{m}+ |\nabla \shat{m}|^2 \widetilde{m} +2 \big(\nabla \shat{m}\cdot \nabla \widetilde{m}\big)\shat{m}+ \shat{m} \times \Delta \shat{m} + \shat{m} \times \shat{u} +\widetilde{m} \times \shat{u} - \shat{m} \times (\shat{m} \times \shat{u}) \\
			\hspace{2cm}-\shat{m} \times (\shat{m} \times \widetilde{u}) -\shat{m} \times (\widetilde{m} \times \shat{u}) - \widetilde{m} \times ( \shat{m} \times \shat{u}) - \widetilde{m} \times (\widetilde{m}\times \shat{u}) \ \ \ \ \text{in}\ \Omega_T,\\
			\frac{\partial \shat{m}}{\partial \eta}=0 \ \ \ \ \ \  \text{in}\ \partial \Omega_T,\ \ \
			\shat{m}(x,0)=0\ \ \text{in}\ \Omega,
		\end{array}
	\end{cases}	
\end{equation}
where the operator $\mathcal{L}_{\widetilde{u}}$ is defined in \eqref{CLO}.

Taking $L^2(0,T;L^2(\Omega))$ inner product of \eqref{GO-S} with the weak solution $\phi$ of the adjoint sytem \eqref{AS}, and then doing an integration by parts, we derive
\begin{align}\label{GO-3}
	&\int_0^T \big<\mathcal{E}_{\widetilde{u}}\phi, \shat{m}\big>\ dt +\int_0^T\big(|\nabla \shat{m}|^2 \ \shat{m},\phi\big)\ dt+ \int_0^T \big(|\nabla \shat{m}|^2 \widetilde{m},\phi \big)\ dt +2 \int_0^T\big(\big(\nabla \shat{m}\cdot \nabla \widetilde{m}\big)\shat{m},\phi \big)\ dt&\nonumber\\
	&\hspace{1cm}+ \int_0^T\big(\shat{m} \times \Delta \shat{m},\phi\big)\ dt+ \int_0^T \big(\shat{m} \times \shat{u},\phi \big)\ dt- \int_0^T \big( \shat{m} \times (\shat{m} \times \shat{u}),\phi \big)\ dt	 \nonumber\\
	&\hspace{1cm}-\int_0^T\big(\shat{m} \times (\shat{m} \times \widetilde{u}), \phi \big)\ dt -\int_0^T\big(\shat{m} \times (\widetilde{m} \times \shat{u}), \phi \big)\ dt - \int_0^T \big(\widetilde{m} \times ( \shat{m} \times \shat{u}), \phi \big)\ dt\nonumber\\
	&\hspace{1cm}= \int_0^T\big(\shat{m}(T),\phi(T)\big)\ dt-\int_0^T \big(\widetilde{m} \times \shat{u},\phi \big)\ dt+ \int_0^T \big(\widetilde{m} \times (\widetilde{m}\times \shat{u}), \phi \big)\ dt,
\end{align}
where we also used the identity similar to \eqref{TIP} for $\int_0^T \big(\shat{m}_t,\phi\big)\ dt$. Now, from the weak formulation of the adjoint problem \eqref{AS}, we have
\begin{align}\label{GO-4}
	&\int_0^T  \big< \phi', \shat{m} \big> \ dt- \int_0^T \big( \nabla \phi, \nabla \shat{m}\big)\ dt +\int_0^T \big( |\nabla \widetilde{m}|^2 \ \phi, \shat{m}\big)\ dt -2 \int_0^T  \big( \nabla \cdot \left\{(\widetilde{m}\cdot \phi)\nabla \widetilde{m}\right\}, \shat{m}\big) \ dt\nonumber\\
	&\hspace{0.5cm}- \int_0^T \big( \nabla (\phi \times \widetilde{m}), \nabla \shat{m}\big)\ dt +\int_0^T \big( \Delta \widetilde{m} \times \phi , \shat{m}\big)\ dt- \int_0^T \big( \phi \times \widetilde{u}, \shat{m}\big)\ dt + \int_0^T \big( (\phi \times \widetilde{m})\times \widetilde{u}, \shat{m}\big) \ dt\nonumber\\
	&\hspace{0.5cm} + \int_0^T \big( \phi \times(\widetilde{m} \times \widetilde{u}), \shat{m}\big)\ dt= -\int_0^T \left(|\nabla \widetilde{m}-\nabla m_d|^2 \big(\nabla \widetilde{m}-\nabla m_d), \nabla \shat{m}\right)\ dt.
\end{align}
Subtracting equality \eqref{GO-4} from \eqref{GO-3}, we get
\begin{align}\label{GO-5}
	&\int_0^T \big(|\nabla \shat{m}|^2 \ \shat{m},\phi\big)\ dt+ \int_0^T \big(|\nabla \shat{m}|^2 \widetilde{m},\phi \big)\ dt +2 \int_0^T \big( \big(\nabla \shat{m}\cdot \nabla \widetilde{m}\big)\ \shat{m},\phi \big)\ dt+ \int_0^T \big(\shat{m} \times \Delta \shat{m},\phi\big)\ dt&\nonumber\\
	&\hspace{0.5cm}+ \int_0^T \big(\shat{m} \times \shat{u},\phi \big)\ dt- \int_0^T \big( \shat{m} \times (\shat{m} \times \shat{u}),\phi \big)\ dt	-\int_0^T \big(\shat{m} \times (\shat{m} \times \widetilde{u}), \phi \big)\ dt  \nonumber\\
	&\hspace{0.5cm}-\int_0^T \big(\shat{m} \times (\widetilde{m} \times \shat{u}), \phi \big)\ dt - \int_0^T \big(\widetilde{m} \times ( \shat{m} \times \shat{u}), \phi \big)\ dt= \int_0^T \big(\shat{m}(T),\phi(T)\big)\ dt\nonumber\\
	&\hspace{0.3cm}-\int_0^T \big(\widetilde{m} \times \shat{u},\phi \big)\ dt+ \int_0^T \big(\widetilde{m} \times (\widetilde{m}\times \shat{u}), \phi \big)\ dt+ \int_0^T \left(|\nabla \widetilde{m}-\nabla m_d|^2  \big(\nabla \widetilde{m}-\nabla m_d), \nabla \shat{m}\right)\ dt.
\end{align}
Therefore, as a result of equality \eqref{GO-5} and $a\cdot(b\times c)=-(b\times a)\cdot c$, the value of $\mathcal{R}$ from \eqref{GO-2} becomes 
\begin{align}\label{GO-6}
	&\mathcal{R}=\int_0^T \big(|\nabla \shat{m}|^2 \ \shat{m},\phi\big)\ dt+ \int_0^T \big(|\nabla \shat{m}|^2 \widetilde{m},\phi \big)\ dt +2 \int_0^T \big( \big(\nabla \shat{m}\cdot \nabla \widetilde{m}\big)\ dt\shat{m},\phi \big)\ dt&\nonumber\\
	&\hspace{1cm}+ \int_0^T \big(\shat{m} \times \Delta \shat{m},\phi\big)\ dt+ \int_0^T \big(\shat{m} \times \shat{u},\phi \big)\ dt- \int_0^T \big( \shat{m} \times (\shat{m} \times \shat{u}),\phi \big)\ dt	\nonumber\\
	&\hspace{1cm} -\int_0^T \big(\shat{m} \times (\shat{m} \times \widetilde{u}), \phi \big)\ dt-\int_0^T \big(\shat{m} \times (\widetilde{m} \times \shat{u}), \phi \big)\ dt - \int_0^T \big(\widetilde{m} \times ( \shat{m} \times \shat{u}), \phi \big)\ dt. 
\end{align}

Now, let us estimate the bounds for each term on the right hand side of $\mathcal{R}$ individually. For the first three terms, using the fact that $|\shat{m}|=|m-\widetilde{m}|\leq |m|+|\widetilde{m}|=2$ and applying H\"older's inequality and the continuous embedding $H^1(\Omega)\hookrightarrow L^4(\Omega)$, followed by the Lipschitz continuity of control-to-state operator given by estimate \eqref{LCCTSO}, we obtain
\begin{flalign*}
	\int_0^T \big(|\nabla \shat{m}|^2\ \shat{m}, \phi\big) \ dt & \leq 2 \int_0^T \|\nabla \shat{m}(t)\|^2_{L^4(\Omega)}\  \|\phi(t)\|_{L^2(\Omega)}\ dt \leq C \int_0^T \|\nabla \shat{m}(t)\|^2_{H^1(\Omega)} \ \|\phi(t)\|_{L^2(\Omega)}\ dt&\\
	& \leq C\ \|\phi\|_{L^2(0,T;L^2(\Omega))} \ \|\shat{m}\|^2_{L^{\infty}(0,T;H^2(\Omega))}\leq C\ \|\phi\|_{L^2(0,T;L^2(\Omega))} \ \|\shat{u}\|^2_{L^2(0,T;H^1(\Omega))},\\
	\int_0^T \big(|\nabla \shat{m}|^2\ \widetilde{m}, \phi\big)\ dt &\leq  C\ \|\phi\|_{L^2(0,T;L^2(\Omega))} \ \|\shat{u}\|^2_{L^2(0,T;H^1(\Omega))},&	
\end{flalign*}\vspace{-0.3cm}
\begin{flalign*}
	\text{and}\ \ \ 2\int_0^T \big( \big(\nabla \shat{m} \cdot \nabla \widetilde{m}\big)\shat{m},\phi\big) \ dt &\leq 2 \int_0^T \|\nabla \shat{m}(t)\|_{L^4(\Omega)} \ \|\nabla \widetilde{m}(t)\|_{L^8(\Omega)}\  \|\shat{m}(t)\|_{L^8(\Omega)} \ \|\phi(t)\|_{L^2(\Omega)}\ dt&\\
	&\leq C\ \|\widetilde{m}\|_{L^2(0,T;H^2(\Omega))} \ \|\phi\|_{L^2(0,T;L^2(\Omega))} \ \|\shat{m}\|^2_{L^\infty(0,T;H^2(\Omega))} \\
	&\leq C\ \|\widetilde{m}\|_{L^2(0,T;H^2(\Omega))} \ \|\phi\|_{L^2(0,T;L^2(\Omega))} \ \|\shat{u}\|^2_{L^2(0,T;H^1(\Omega))}.
\end{flalign*}
The terms $\big(\shat{m} \times \Delta \shat{m},\phi\big)$ and $\big(\shat{m}\times \shat{u},\phi \big) $ can be estimated by the same upper bound.  Further, applying the H\"older's inequality, the embedding $H^1(\Omega)\hookrightarrow L^p(\Omega)$ for $p\in [1,\infty)$ and estimate \eqref{LCCTSO} for the subsequent terms involving the control, we derive
\begin{flalign*}
	\int_0^T &\big(\shat{m} \times (\shat{m}\times \shat{u}),\phi\big)\ dt \leq  2\int_0^T \|\shat{m}(t)\|_{L^4(\Omega)}\ \|\shat{u}(t)\|_{L^4(\Omega)} \ \|\phi(t)\|_{L^2(\Omega)}\ dt&\\
	&\hspace{1cm}\leq C\ \|\phi\|_{L^2(0,T;L^2(\Omega))}\ \|\shat{m}\|_{L^\infty(0,T;H^1(\Omega))}\ \|\shat{u}\|_{L^2(0,T;H^1(\Omega))} \leq C\ \|\phi\|_{L^2(0,T;L^2(\Omega))} \ \|\shat{u}\|^2_{L^2(0,T;H^1(\Omega))},
\end{flalign*}\vspace{-0.6cm}
\begin{flalign*}
&\text{and}\ \ 	\int_0^T \big(\shat{m} \times (\shat{m}\times \widetilde{u}),\phi\big) \ dt\leq  2\int_0^T \|\shat{m}(t)\|^2_{L^8(\Omega)}\ \|\widetilde{u}(t)\|_{L^4(\Omega)} \ \|\phi(t)\|_{L^2(\Omega)}\ dt&\\
	&\hspace{0.5cm}\leq C\ \|\phi\|_{L^2(0,T;L^2(\Omega))}\ \|\widetilde{u}\|_{L^2(0,T;H^1(\Omega))}\ \|\shat{m}\|^2_{L^\infty(0,T;H^1(\Omega))}\leq C\ \|\phi\|_{L^2(0,T;L^2(\Omega))}\  \|\widetilde{u}\|_{L^2(0,T;H^1(\Omega))}\  \|\shat{u}\|^2_{L^2(0,T;H^1(\Omega))}.
\end{flalign*}
The rest of the terms in \eqref{GO-6} can be estimated in a similar way. Now, combining all the above estimates, we find the following bound for $\mathcal{R}$:
\begin{equation*}
	|\mathcal{R}| \leq C(\Omega,T)\ \left\{ 1+ \|\widetilde{m}\|_{L^\infty(0,T;H^2(\Omega))}+\|\widetilde{u}\|_{L^2(0,T;H^2(\Omega))} \right\} \ \|\phi\|_{L^2(0,T;L^2(\Omega))} \ \|\shat{u}\|^2_{L^2(0,T;H^1(\Omega))}.
\end{equation*}
Substituting this bound in estimate \eqref{GO-2}, we find our required result. The proof is thus completed.
\end{proof}
\noindent {\bf Data availability statement:} We do not generate any datasets, because our work proceeds within a theoretical and mathematical approach.

\end{document}